\newcommand{\fm}{f_{\max}}
\newtheorem{theorem}{Theorem}[section]
\newtheorem{lemma}[theorem]{Lemma}
\newtheorem{question}[theorem]{Question}
\newtheorem{proposition}[theorem]{Proposition}
\newtheorem{claim}[theorem]{Claim}
\newtheorem{definition}[theorem]{Definition}
\newtheorem{thm}[theorem]{Theorem}
\newtheorem{prop}[theorem]{Proposition}
\newtheorem{example}[theorem]{Example}
\newtheorem{conj}[theorem]{Conjecture}
\numberwithin{equation}{section}
\def\N{\mathbb{N}}
\def\Z{\mathbb{Z}}
\newcommand{\mis}{\textsc{mis}}
\newcommand*{\rom}[1]{\expandafter\@slowromancap\romannumeral #1@}
\def\COMMENT#1{}
\let\COMMENT=\footnote% COMMENT OUT for clean output
\title{On maximal sum-free sets in abelian groups}
\author{Nathana\"el Hassler and
 Andrew Treglown}
\thanks{NH: Ecole Normale Sup\'erieure (ENS) de Rennes, France,
{\tt nathanael.hassler@ens-rennes.fr}
AT: University of Birmingham, United Kingdom, {\tt a.c.treglown@bham.ac.uk}, research supported by EPSRC grant EP/V002279/1.}
\begin{document}
 
\begin{abstract}
Balogh, Liu, Sharifzadeh and Treglown [Journal of the European Mathematical Society, 2018] recently gave a sharp count on the number of maximal sum-free subsets of $\{1, \dots, n\}$, thereby answering a question of  Cameron and Erd\H{o}s. In contrast, not as much is know about the analogous problem for finite abelian groups. In this paper we give the first sharp results in this  direction, determining asymptotically the number of 
maximal sum-free sets in both the binary and ternary spaces $\mathbb Z^k_2$ and $\mathbb Z^k_3$.  We also make  progress on a conjecture of Balogh, Liu, Sharifzadeh and Treglown concerning a general lower bound on the number of maximal sum-free sets in abelian groups of a fixed order. Indeed, we verify the conjecture for all finite abelian groups with a cyclic component of size at least 3084. Other related results and open problems are also presented. 
\end{abstract}

\date{\today}

\maketitle

\section{Introduction}
Let $G$ be a group or set of integers. A triple $x,y,z \in G$ is a \emph{Schur triple} if $x+y=z$ (note $x,y$ and $z$ may not necessarily be distinct). We say that a subset $A$ of $G$ is \emph{sum-free} if $A$ contains no Schur triples. A \emph{maximal sum-free} subset of $G$ is a sum-free set $A \subseteq G$ that is not properly contained in another sum-free subset of $G$.

There has been significant interest in the study of sum-free subsets of $[n]:=\{1,\dots,n\}$. It is an easy exercise to prove that the largest sum-free subset of $[n]$ has size $\lceil n/2 \rceil$ (attained, e.g., by the set of odd integers in $[n]$). Answering  (a stronger version of) a conjecture of Cameron and Erd\H{o}s~\cite{cam1},  Green~\cite{G-CE} and independently Sapozhenko~\cite{sap} proved the following:  there are constants $C_1$ and $C_2$ such that the number of sum-free subsets in $[n]$ is $(C_i+o(1))2^{n/2}$ 
for all $n \equiv i \pmod{2}$. Addressing another problem of Cameron and Erd\H{o}s~\cite{CE},  Balogh, Liu, Sharifzadeh and the second author~\cite{BLST2} proved the following: for each $1\leq i \leq 4$, there is a constant $D_i$ such that, given any $n\equiv i \pmod{4}$,
$[n]$ contains  $(D_i+o(1)) 2^{n/4}$ maximal sum-free sets.

There has also been interest in studying analogous questions in the setting of groups.
In what follows
 $G$ will always be a finite abelian group, where unless stated otherwise, $G$ has order $n$. Denote by $\mu(G)$ the size of a largest sum-free subset of $G$. Denote by $f(G)$ the number of sum-free subsets of $G$ and by $\fm(G)$ the number of maximal sum-free subsets of $G$. %Given a set $F \subseteq G$, we write 
%$\fm (F)$ for the number of maximal sum-free subsets of $G$ that lie
%in $F$.

The study of sum-free sets in finite abelian groups dates back to the 1960s. Indeed, Diananda and Yap~\cite{yan} determined $\mu (G)$ for all so-called type \rom{1} and \rom{2} groups. However, it was not until 2005 that Green and Ruzsa~\cite{GR-g} determined $\mu(G)$ exactly for all finite abelian groups; see Theorem~\ref{mu(G)} below. In particular, for every finite abelian group $G$ of order $n$, $2n/7 \leq \mu (G) \leq n/2$.
Further, Green and Ruzsa~\cite{GR-g} determined $f(G)$ up to an error term in the exponent for all finite abelian groups $G$, showing that $f(G)=2^{\mu(G)+o(n)}$. A
refined version of this counting result for type \rom{1} groups was obtained by  Alon,  Balogh,  Morris and  Samotij~\cite{alon}. 

Much less is known, however, about the value of 
$\fm(G)$. Improving on an earlier bound of Wolfovitz~\cite{wolf}, the following result provides a general upper bound on $\fm(G)$ and  shows there is an exponential gap between the values of $f(G)$ and $\fm(G)$.

\begin{prop}  \cite{BLST2} \label{easyb} Let $G$ be an abelian group of order $n$.
Then
\begin{align*}
\fm(G)\le 3^{\mu(G)/3+o(n)}.
\end{align*}
\end{prop}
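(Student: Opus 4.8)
The plan is to combine the hypergraph container method with the Moon--Moser bound on the number of maximal independent sets in a graph. Recall Moon and Moser's theorem: a graph on $N$ vertices has at most $3^{N/3}$ maximal independent sets. Since $\mu(G)/3$ appears in the exponent, the target is to encode each maximal sum-free set as a maximal independent set of a graph on at most $\mu(G)+o(n)$ vertices, losing only a $2^{o(n)}$ factor along the way.

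The first step is to produce containers. A sum-free set is precisely an independent set in the $3$-uniform hypergraph $\mathcal H$ on vertex set $G$ whose edges are the Schur triples $\{x,y,z\}$ with $x+y=z$. Applying the hypergraph container lemma to $\mathcal H$ yields a family $\mathcal C$ of subsets of $G$ with $|\mathcal C|\le 2^{o(n)}$ such that every sum-free set is contained in some $C\in\mathcal C$, and each container spans only $o(n^2)$ Schur triples. By the arithmetic removal lemma of Green--Ruzsa, such an almost-sum-free set can be made sum-free by deleting $o(n)$ elements, so $|C|\le \mu(G)+o(n)$. Since every maximal sum-free set $A$ lies in some container $C$, and since the maximality of $A$ in $G$ forces $A$ to be a maximal sum-free subset of $C$ as well, it suffices to bound, for each fixed $C$, the number of maximal sum-free subsets of $C$ by $3^{\mu(G)/3+o(n)}$; summing over the $2^{o(n)}$ containers is then harmless.

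The second step is the per-container count via Moon--Moser. Fix a container $C$ and write $N=|C|\le\mu(G)+o(n)$. I would like to say that the maximal sum-free subsets of $C$ are exactly the maximal independent sets of a graph on $C$ and invoke Moon--Moser to obtain $3^{N/3}$. The main obstacle is that being sum-free is a genuinely ternary condition---the forbidden configuration $x+y=z$ requires the third element $z$ to lie in the set---so it is not captured by any fixed graph on $C$, and maximal independent sets of $3$-uniform hypergraphs admit no $3^{N/3}$ bound in general. To overcome this I would exploit that $C$ is almost sum-free and of near-extremal size: by the Green--Ruzsa stability theory, $C$ differs in at most $o(n)$ elements from a genuine maximum sum-free set of $G$, whose coset/interval structure makes the blocking of an outside element by $A$ a binary relation. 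Concretely, remove the $o(n)$ exceptional elements of $C$ (each contributes a factor of at most $2$ to the count, for a total overcount of $2^{o(n)}$); on the remaining structured part the Schur condition reduces to a graph condition, so the surviving maximal sum-free sets are maximal independent sets of a genuine graph $\Gamma$ with $|V(\Gamma)|\le N$.

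Putting the pieces together, the number of maximal sum-free subsets of a single container is at most $2^{o(n)}\cdot 3^{N/3}\le 3^{\mu(G)/3+o(n)}$, and multiplying by $|\mathcal C|\le 2^{o(n)}$ preserves this bound. The delicate point, and where I expect most of the work to lie, is the second step: justifying the ternary-to-binary reduction uniformly over all containers, i.e.\ showing that the stability structure is strong enough that deleting $o(n)$ vertices really does turn the Schur hypergraph on $C$ into the independence structure of an ordinary graph without distorting which subsets are maximal.
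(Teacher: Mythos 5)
Your overall architecture --- Green--Ruzsa containers, the removal lemma to split each container as $F=B\cup C$ with $B$ sum-free and $|C|=o(n)$, enumeration over the trace on $C$, and Moon--Moser applied to a graph on at most $\mu(G)+o(n)$ vertices --- is exactly the machinery behind the proof (the proposition is quoted from \cite{BLST2}, and Sections~\ref{sec:mainthm} and~\ref{sec:thm2} of this paper run the same scheme). You also correctly identify the crux: sum-freeness is a ternary condition while Moon--Moser is a statement about graphs.

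The gap is in how you resolve that crux, and the mechanism you propose is not the right one. Once the $o(n)$ exceptional elements are removed, the remaining set $B$ is genuinely sum-free, so it spans \emph{no} Schur triples at all; there is no residual ``graph condition'' intrinsic to $B$, and an appeal to Green--Ruzsa stability or to the coset structure of near-extremal sum-free sets is neither needed nor helpful. The ternary condition lives entirely in the interaction between $B$ and the chosen trace $S\subseteq C$: because $B$ is sum-free, every Schur triple inside $S\cup I$ (with $I\subseteq B$) uses at least one element of $S$ and hence at most two elements of $I$, and \emph{that} is what makes the reduction binary. The correct device is the link graph $L_S[B]$ --- a graph on $B$ whose edge set depends on $S$, with $x,y$ adjacent when $\{x,y,s\}$ is a Schur triple for some $s\in S$ and with loops recording triples using two elements of $S$ --- together with Lemma~\ref{extension link graph}, which says that if $S\cup I$ is maximal sum-free in $G$ then $I$ is a maximal independent set of $L_S[B]$. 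That lemma is precisely the ``maximality is not distorted'' point you flag but do not prove. With it in hand the count is $2^{o(n)}$ containers, times $2^{|C|}=2^{o(n)}$ choices of the sum-free trace $S$, times $\mis(L_S[B])\le 3^{|B|/3}\le 3^{\mu(G)/3+o(n)}$ by (\ref{Moon-Moser}), which is the bound you were aiming for.
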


 Let $\mathbb Z_p^k:=\mathbb Z_p\times \mathbb Z_p\times\cdots\times \mathbb Z_p$. Sum-free sets in $\mathbb Z^k_2$ have been  well studied. As noted in the introduction of \cite{ped}, `sum-free sets [in $\mathbb Z^k_2$] also occur in geometry as blocking sets... and in coding theory as the set of columns of a parity check matrix for a linear code with minimum distance at least four'.
 Motivated by coding theory applications, Davydov and Tombak~\cite{dav} and also independently Clark and Pedersen~\cite{ped} provided a
 structural characterisation of `large' sum-free subsets of $\mathbb Z^k_2$ (see Theorem~\ref{structure Z2} below).
In~\cite{BLST2} it was proven that $\fm(\mathbb Z^k_2)=2^{{\mu(\mathbb Z^k_2)}/{2}+o(n)}$ where $\mu(\mathbb Z^k_2)=n/2$. Our first theorem gives a sharp version of this result.
\begin{thm}\label{main} Let $k \in \mathbb N$ and $n:=2^k$. Then
$$\fm (\mathbb Z^k_2)= \left (\binom{n-1}{2}+o(1) \right ) 2^{n/4}.$$
\end{thm}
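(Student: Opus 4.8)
The plan is to prove matching upper and lower bounds. Throughout write $G=\mathbb Z_2^k$, so that $x+y=z$ is a Schur triple exactly when $x+y+z=0$, and note the clean reformulation that a set $A$ is \emph{maximal} sum-free if and only if $A+A=G\setminus A$ (sum-freeness gives $A\cap(A+A)=\emptyset$, and maximality says every nonzero $g\notin A$ is blocked, i.e.\ $g\in A+A$; since $0\in A+A$ and $0\notin A$ the two sides coincide). The lower bound will come from an explicit family, and the upper bound from the structure theorem (Theorem~\ref{structure Z2}) together with a container/supersaturation count that improves on Proposition~\ref{easyb}.

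For the lower bound, fix a nonzero $u\in G$ and a hyperplane (index-$2$ subgroup) $H$ with $u\in H$, and put $T:=G\setminus H$, so $|T|=n/2$ and $T+T=H$. Since $u\in H$ and $u\neq 0$, translation $t\mapsto t+u$ is a fixed-point-free involution of $T$, partitioning $T$ into $n/4$ pairs $\{t,t+u\}$. For any transversal $B$ of this pairing (one element per pair) I set $A:=B\cup\{u\}$. One checks directly that $A$ is sum-free for \emph{every} transversal: $B\subseteq T$ is sum-free, $u+b\in T$ lies in the excluded partner class so $u+b\notin B$, and $b+b'\in H$ equals $u$ only if $b'=b+u\notin B$. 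By the reformulation, $A$ is \emph{maximal} precisely when $A+A=G\setminus A$; the only nontrivial requirement is $B+B\supseteq H\setminus\{u\}$ (the elements $T\setminus B=B+u$ are automatically blocked by $u$, and $u\notin B+B$ is automatic), i.e.\ $B+B=H\setminus\{u\}$. This coverage condition fails only for transversals $B$ that are simultaneously transversals of $t\mapsto t+h$ for some $h\in H\setminus\{u\}$, of which there are at most $|H|\cdot 2^{n/8}=o(2^{n/4})$; hence $(1-o(1))2^{n/4}$ of the $2^{n/4}$ transversals yield a maximal sum-free set.

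It remains to count without overcounting. Given $A$ of this form, the pair $(H,u)$ is recovered from $B$ via $H=(B+B)\cup\{u\}$, so the datum is really $(u,B)$ with $H$ determined; distinct $(u,B)$ give distinct $A$ except when $A$ admits two legitimate ``special'' points, which forces an extra algebraic coincidence (a second $u'\in A$ with $(A\setminus\{u'\})+(A\setminus\{u'\})\cup\{u'\}$ a hyperplane) and so occurs for only $o(n^2 2^{n/4})$ sets. The number of admissible pairs $(H,u)$ is $(n-1)(|H|-1)=(n-1)(n/2-1)=\binom{n-1}{2}$, giving $\fm(G)\ge (1-o(1))\binom{n-1}{2}2^{n/4}$.

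For the upper bound I would first use Theorem~\ref{structure Z2}, the Davydov--Tombak/Clark--Pedersen description of large sum-free sets, to localize: any maximal sum-free set of size close to $\mu(\mathbb Z_2^k)=n/2$ is governed by a hyperplane complement, and such sets (including the $n-1$ complements themselves, which are maximal) are $o(2^{n/4})$ in number. The bulk of maximal sum-free sets have size near $n/4$, and here lies the difficulty: Proposition~\ref{easyb} only gives $3^{n/6+o(n)}=2^{(0.264\dots+o(1))n}$, which \emph{exceeds} $2^{n/4}$, so it cannot be used directly to discard atypical sets. The main obstacle is therefore a refined count showing that maximal sum-free sets which are \emph{not} of the constructed form number $o(2^{n/4})$; I would run a container/supersaturation argument inside the structurally restricted part of $G$ supplied by Theorem~\ref{structure Z2}, using $A+A=G\setminus A$ to force the ``two near-complementary halves plus a point'' shape and a stability analysis of the coverage equation $B+B=H\setminus\{u\}$ to pin down the exact constant $\binom{n-1}{2}$. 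Matching this against the construction then yields $\fm(\mathbb Z^k_2)=\bigl(\binom{n-1}{2}+o(1)\bigr)2^{n/4}$.
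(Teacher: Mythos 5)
Your lower bound is essentially the paper's argument in different clothing: your transversals of the pairing $t\mapsto t+u$ on $T=G\setminus H$ are exactly the maximal independent sets of the link graph $L_{\{u\}}[T]$ (a perfect matching), your ``coverage failure'' analysis is the paper's count of bad independent sets via the $2^{n/8}$ maximal independent sets of $L_{\{u,h\}}[T]$, and your count $(n-1)(n/2-1)=\binom{n-1}{2}$ of admissible pairs agrees. The one soft spot is the overcounting step: you assert that a set generated by two distinct pairs forces ``an extra algebraic coincidence'' occurring for only $o(n^2 2^{n/4})$ sets, but give no argument. This is fixable --- the paper shows two distinct pairs share at most $n/4$ generated sets, since for $H\ne H'$ the set minus its two special points must lie in $(G\setminus H)\cap(G\setminus H')$, a set of size $n/4$ --- but as written it is an assertion, not a proof.

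The upper bound, however, is a genuine gap. You correctly diagnose that Proposition~\ref{easyb} only gives $3^{n/6+o(n)}>2^{n/4}$ and that the whole difficulty is showing the atypical maximal sum-free sets number $o(2^{n/4})$, but you then merely announce that you ``would run a container/supersaturation argument'' and ``a stability analysis'' without supplying either; that is the bulk of the theorem. Note also that Theorem~\ref{structure Z2} cannot be applied to the maximal sum-free sets themselves: the main-term sets have size $n/4+1<5\cdot 2^{k-4}$, below the threshold. The paper instead applies the Green--Ruzsa container lemma, uses the removal lemma to write each container as $F=B\cup C$ with $B$ sum-free and $|C|=o(n)$, applies the structure theorem to $B$ (when $|B|>5\cdot 2^{k-4}$), and decomposes every maximal sum-free set in $F$ as $S\cup I$ with $S\subseteq C$ and $I$ a maximal independent set in $L_S[B]$. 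The real work is then a case analysis on $|S|$: Moon--Moser when $B$ is small; and when $B$ is a hyperplane coset (so that $L_S[B]$ is $|S|$-regular), explicit computations for $|S|=2,3$ (giving $2^{n/8}$ and $6^{n/16}$), the refined Moon--Moser bound of Lemma~\ref{stability} for $4\le |S|\le 10^4$, and Lemma~\ref{dense and regular graphs} for $|S|>10^4$ --- each beating $2^{n/4}$ by an exponential margin large enough to absorb the $2^{o(n)}$ choices of container and of $S$, leaving only $|S|=1$ to produce the main term $\binom{n-1}{2}2^{n/4}$. None of this machinery, or any substitute for it, appears in your proposal.
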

Theorem~\ref{main} provides the first finite abelian groups for which we now do have a  sharp count on the number of maximal sum-free subsets.
We also believe Theorem~\ref{main} is of interest to the projective space  community. Indeed, the notion of a so-called complete cap in a projective space 
$PG(k,2)$ over $\mathbb F_2$ is equivalent to the notion of a maximal sum-free set in 
$\mathbb Z^{k+1}_2$. Thus, Theorem~\ref{main} asymptotically determines the number
of complete caps in $PG(k,2)$; see Section~\ref{sec:pro} for further details.

Other than for $\mathbb Z^k_2$, the only other finite abelian groups $G$ for which $\fm(G)$ is known up to an error term in the exponent are $G$ for which  $9|n$ but $p \nmid n$ for every prime $p$ with $p \equiv 2 \pmod{3}$. In this case, Liu and Sharifzadeh~\cite{ls} proved that the upper bound in Proposition~\ref{easyb} is in fact tight. That is, for such $G$ we have $\fm(G)=3^{{\mu(G)}/{3}+o(n)}$. 

Our next theorem sharpens this result when $G=\mathbb Z^k_3$; note that $\mu(\mathbb Z^k_3)=3^{k-1}$.
\begin{thm}\label{thm2}
Let $k\in\N$ and $n:=3^k$. Then
$$\fm(\Z_3^k)=\left(\frac{(n-3)(n-1)}{3}+o(1)\right) 3^{n/9}.$$
\end{thm}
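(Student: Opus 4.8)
The plan is to establish matching lower and upper bounds whose leading term is the product $(n-1)\cdot\frac{n-3}{3}\cdot 3^{n/9}$. Throughout, for a surjective homomorphism $\phi\colon\Z_3^k\to\Z_3$ write $H:=\phi^{-1}(0)$, $B_1:=\phi^{-1}(1)$ and $B_2:=\phi^{-1}(2)$, so that $|H|=|B_1|=|B_2|=n/3$ and, since $B_1+B_1\subseteq B_2$, the coset $B_1$ is a maximum sum-free set, each of size $\mu(\Z_3^k)=3^{k-1}$. There are exactly $n-1$ such maximum sum-free sets in total, namely the two nontrivial cosets of each of the $(n-1)/2$ index-$3$ subgroups of $\Z_3^k$; these will supply the factor $n-1$.

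For the lower bound I would exhibit an explicit family. Fix one of the $n-1$ sets $B_1$ and a nonzero $x\in H$; there are $(n/3)-1=(n-3)/3$ choices for $x$. Translation by $x$ partitions $B_1$ into the $n/9$ triangles $\{b,\,b+x,\,b+2x\}$ (all orbits have size $3$, as $x$ has order $3$ and acts freely). Choosing exactly one element from each triangle gives a transversal $S\subseteq B_1$, and I set $A:=\{x\}\cup S$. Adding $x$ forbids any two elements of a common triangle, since $x+b=b+x$, so $A$ is sum-free, yielding $3^{n/9}$ sum-free sets. To see almost all are maximal, note every $g\in B_1\setminus S$ is blocked by a triple with $x$ and the representative of its triangle, while $g\in B_2$ is blocked once $g\in S+S$; a covering estimate shows that $S+S=B_2$ for all but a $2^{-\Omega(n)}$-proportion of transversals, so all but $o(3^{n/9})$ of the $A$ are genuinely maximal in $\Z_3^k$. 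Finally, for almost every $S$ the affine span of $S$ is all of $B_1$, so $B_1$, and hence $x=A\setminus B_1$ and $S=A\cap B_1$, are recovered from $A$; the map $(B_1,x,S)\mapsto A$ is thus injective on a $(1-o(1))$-proportion of triples, producing $(1-o(1))\frac{(n-1)(n-3)}{3}\,3^{n/9}$ distinct maximal sum-free sets.

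For the upper bound I would use the container/removal-lemma machinery for sum-free sets in $\Z_3^k$ (as in the proof of Theorem~\ref{main} and in \cite{ls,BLST2}), together with a stability characterisation of large sum-free subsets of $\Z_3^k$. The first task is to discard the maximal sum-free sets $A$ not captured by a container of the form $B_1\cup\{x\}$ with $x\in H$: a stability argument shows any sum-free set whose sum-free container is not essentially a maximum coset is substantially smaller, and a crude count inside such atypical containers bounds their contribution by $3^{(1/9-\eps)n}$, negligible against $n^2\,3^{n/9}$. The second, and main, task is to show that for a fixed container $B_1\cup\{x\}$ the number of maximal sum-free sets $A$ with $x\in A\subseteq B_1\cup\{x\}$ is at most $(1+o(1))3^{n/9}$: every such $A$ meets each triangle in at most one point, and maximality forces exactly one point per triangle (else an unused triangle element could be re-added), so $A$ is a transversal and $S+S=B_2$, of which there are $(1+o(1))3^{n/9}$. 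Summing over the $(n-1)\cdot\frac{n-3}{3}$ relevant containers and controlling the over-counting exactly as in the lower bound yields $\fm(\Z_3^k)\le(1+o(1))\frac{(n-1)(n-3)}{3}\,3^{n/9}$.

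The hard part will be the upper bound, and specifically sharpening the exponential-error estimate $\fm(\Z_3^k)=3^{n/9+o(n)}$ of \cite{ls} to a bound with the correct polynomial pre-factor. This needs (i) a sufficiently precise stability result ensuring that the only containers contributing at order $3^{n/9}$ are the coset-plus-point containers $B_1\cup\{x\}$, and (ii) careful bookkeeping of the over-counting, showing that a typical extremal-type maximal set arises from exactly one triple $(B_1,x,S)$ so that the raw product $(n-1)\cdot\frac{n-3}{3}\cdot 3^{n/9}$ is not deflated. The genuinely delicate feature is that the dominant maximal sum-free sets have size only about $n/9$, far below $\mu(\Z_3^k)=n/3$; the argument must therefore count \emph{small} maximal sets lying inside near-extremal containers, rather than sets that are themselves close to a maximum sum-free set.
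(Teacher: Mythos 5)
Your overall strategy --- containers plus a structure theorem to reduce to coset-plus-point configurations, link graphs whose components are triangles supplying the $3^{n/9}$ factor, and the count $(n-1)\cdot(n/3-1)$ of pairs $(B_1,x)$ --- is the same as the paper's, but both halves of your argument have genuine gaps. In the lower bound your maximality check is incomplete: besides $B_1\setminus S$ and $B_2$ you must also block every $h\in H\setminus\{0,x,2x\}$, and such an $h$ can be added to $\{x\}\cup S$ unless $h\in S-S$. So you need a second covering condition $S-S\supseteq H\setminus\{0,x,2x\}$ in addition to $S+S=B_2$, and neither covering estimate is actually proved (both are true for all but an exponentially small fraction of transversals, but this requires an argument, e.g.\ analysing the orbits of the triangles under translation by $h$). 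The paper sidesteps all of this: if $I$ is a maximal independent set in $L_{\{x\}}[B_1]$ but $\{x\}\cup I$ is not maximal sum-free, then $\{x,x'\}\cup I$ is sum-free for some $x'$ outside $B_1$, so $I$ is a maximal independent set in the two-element link graph $L_{\{x,x'\}}[B_1]$; the component analysis of Claim~\ref{2S Z3} shows there are at most $3^{0.1n}$ such $I$ for each of the at most $2n/3-2$ choices of $x'$, which disposes of the bad $I$ with no covering lemma. Your injectivity step via affine spans is also shakier than needed; the paper simply bounds by $n/9$ the number of maximal sets generated by two distinct pairs and notes there are only polynomially many pairs of pairs.

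The more serious gap is in the upper bound. The problematic configurations are not ``atypical containers'': a container is $F=B\cup C$ with $B$ sum-free and $|C|=o(n)$, so even when $B=g+H$ is a full coset you must handle every sum-free $S\subseteq C$ with $|S|\ge 2$, and these live inside the \emph{good} containers. For such $S$ the link graph $L_S[B]$ has $n/3$ vertices, and the crude Moon--Moser bound gives $3^{n/9}$ --- exactly the main term, not $3^{(1/9-\eps)n}$; multiplied by the $2^{o(n)}$ choices of $S$ and of container, a crude count overshoots. Beating Moon--Moser here requires exploiting that $L_S[B]$ has strictly more than $|B|$ edges: the paper proves degree bounds (Claim~\ref{regularity of the link graph Z3}), feeds them into the refined bound of Lemma~\ref{stability} for $3\le|S|\le 10^4$ and into Lemma~\ref{dense and regular graphs} for $|S|>10^4$, and performs an explicit three-case component analysis for $|S|=2$, where the resulting bound $3^{0.1n}$ only just clears the target $3^{n/9}=3^{0.111\ldots n}$. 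This case analysis is the core of the proof and is absent from your proposal; without it the upper bound does not close. You also omit the singletons $s\in 2g+H$, whose link graph is essentially a perfect matching and which contribute $O(n^2 2^{n/6})=o(3^{n/9})$ --- harmless, but they must be accounted for in an upper bound.
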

The proofs of  Theorem~\ref{main} and Theorem~\ref{thm2} draw on a number of ideas and tools from previous papers~\cite{BLST, BLST2, ls} and in particular, utilise a container theorem of Green and Ruzsa~\cite{GR-g}. We also utilise structural results from~\cite{ped, dav, lev} (which view $ \mathbb Z_p^k$ as a vector space over $\mathbb Z_p$).
It is worth remarking that our proofs  actually show that the $o(1)$ terms in  Theorems~\ref{main} and~\ref{thm2} are  of the form $O(2^{-\delta n})$ for some fixed $\delta >0$.

In both~\cite{BLST2} and \cite{ls} a number of questions and conjectures concerning $\fm(G)$ are raised. In particular,  motivated by the fact that  $2n/7 \leq \mu (G) \leq n/2$ for every finite abelian group $G$ of order $n$, the following conjecture was made in~\cite{BLST2}.
\begin{conj}\cite{BLST2}\label{conj1}
For every abelian group $G$ of order $n$,
$$2^{n/7} \leq f_{\max} (G) \leq 2^{n/4+o(n)},$$
where the bounds, if true, are best possible.
\end{conj}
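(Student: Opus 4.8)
The plan is to establish the two inequalities separately, since their difficulty sits in complementary ranges of $\mu(G)$. For the upper bound, note first that Proposition~\ref{easyb} is already almost enough: one has $3^{\mu(G)/3}\le 2^{n/4}$ exactly when $\mu(G)\le \tfrac{3\ln 2}{4\ln 3}\,n\approx 0.473\,n$, so $\fm(G)\le 2^{n/4+o(n)}$ follows immediately from Proposition~\ref{easyb} for every group with $\mu(G)\le 0.473\,n$. By the Green--Ruzsa determination of $\mu$, the only groups with $\mu(G)>0.473\,n$ are those of even order, and these satisfy $\mu(G)=n/2$ exactly. Hence the genuinely new case of the upper bound is $\mu(G)=n/2$, and here I would run a container argument modelled on the proof of Theorem~\ref{main}: apply the Green--Ruzsa container theorem to trap every sum-free set in one of $2^{o(n)}$ containers; use the structural description of near-maximum sum-free sets in even-order groups to show each relevant container is a small perturbation of a genuine maximum sum-free set; and then bound the number of maximal sum-free sets inside a fixed container by the number of maximal independent sets of an associated (sparse) link graph, which is what yields the $2^{n/4}$ scaling.

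For the lower bound I would argue constructively, and here the hypothesis that $G$ has a cyclic component $\Z_m$ with $m\ge 3084$ does real work. Writing $G=\Z_m\times H$ with $n=m|H|$, the preimage under $G\to\Z_m$ of the middle-third interval $I=(m/3,2m/3)\subseteq\Z_m$ is a sum-free set $P:=I\times H$ of size at least $n/3-n/m\ge n/3-n/3084>2n/7$; in particular $\mu(G)$ is bounded away from $2n/7$ for all such groups, which is why $2^{n/7}$ is not tight here and there is room to spare. Crucially $I$ contains no internal Schur triple, so neither does $P$, and every subset of $P$ is sum-free. I would then isolate a fixed core $A_0\subseteq P$ together with at least $n/7$ pairwise-disjoint conflict pairs $\{g_i,g_i'\}$ --- pairs for which at most one member may be added to $A_0$ while preserving sum-freeness --- built from elements straddling the boundary of $I$ (swapping an element just inside $I$ for one just outside). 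Each independent selection, once greedily completed, yields a maximal sum-free set, and one argues these $2^{n/7}$ sets are distinct. The value $3084$ is essentially the threshold at which the interval arithmetic guarantees enough boundary swaps to reach density $1/7$.

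The main obstacle, in both directions, is maximality. On the lower-bound side one must certify that distinct selections are recoverable from their completions, i.e.\ that the greedy completion neither collapses many selections to a common maximal set nor silently reinstates a discarded partner; this injectivity is the step that genuinely consumes the $m\ge 3084$ hypothesis and the explicit structure of $I$. On the upper-bound side, the delicate point is the stability statement for $\mu(G)=n/2$: one needs that all but a negligible fraction of maximal sum-free sets lie in containers that are tiny perturbations of a maximum sum-free set, so that the maximal-independent-set count in each link graph does not exceed $2^{n/4+o(n)}$. I expect this container-and-stability analysis in the even-order case to be the hardest part, precisely because it is the one regime in which Proposition~\ref{easyb} contributes nothing.

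Finally, I would emphasise that this strategy settles the conjecture only for groups with a cyclic component of size at least $3084$, matching the stated progress. The remaining difficulty in the full conjecture is concentrated on groups all of whose cyclic components are small --- most notably the type~\rom{3} groups such as $\Z_7^k$, for which $2^{n/7}$ is tight and the boundary-swap construction is simply unavailable, so a genuinely different lower-bound mechanism (mirroring the pairing phenomenon behind Theorem~\ref{thm2}) would be required.
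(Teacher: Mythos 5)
First, a point of status: this statement is a conjecture, and neither the paper nor your proposal proves it. What the paper actually establishes here is Proposition~\ref{prop34} (via Lemma~\ref{large cyclic component}): the \emph{lower} bound $2^{n/7}\le \fm(G)$ for groups of the form $G=\Z_m\times K$ with $m\ge 3084$. The upper bound of Conjecture~\ref{conj1} is not addressed. Your observation that Proposition~\ref{easyb} already gives $\fm(G)\le 3^{\mu(G)/3+o(n)}\le 2^{n/4+o(n)}$ whenever $\mu(G)\le \frac{3\ln 2}{4\ln 3}n$, and that by Theorem~\ref{mu(G)} the only remaining case is $\mu(G)=n/2$ (even order), is correct and worth noting; but your plan for that remaining case leans on a ``structural description of near-maximum sum-free sets in even-order groups'' that is not available in the generality required, so that direction should be presented as open, not as a proof outline.

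The genuine gap is in your lower-bound construction. Conflict pairs ``built from elements straddling the boundary of $I$'' number only $O(|H|)=O(n/m)\le n/3084$: for each fixed $h\in H$ there are $O(1)$ elements of $\Z_m\times\{h\}$ adjacent to the endpoints of $I$. So your mechanism yields at most $2^{O(n/m)}$ maximal sum-free sets, exponentially short of $2^{n/7}$; no choice of core $A_0$ repairs this, and since $P$ itself is sum-free its interior elements create no binary conflicts at all. The paper's Lemma~\ref{large cyclic component} works quite differently: writing $m=9k+i$, it fixes the two-element sum-free set $S=\{k,-2k\}$ and the interval $B=[3k+1,6k]$ in $\Z_m$, shows the link graph $L_S[B]$ decomposes (up to a bounded number of exceptional components) into copies of $K_2\square K_3$, each contributing $6$ maximal independent sets, and then lifts this to $\Z_m\times K$ via Lemma~\ref{generalisation link graph}, obtaining roughly $6^{n/18}$ maximal independent sets; distinct maximal independent sets of the lifted link graph extend to distinct maximal sum-free sets because no further element of $B\times K$ can be added. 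The point is that $6^{1/18}=2^{0.1436\ldots}$ exceeds $2^{1/7}=2^{0.1428\ldots}$ by a razor-thin margin, and the multiplicative losses of order $2^{-cn/m}$ from the exceptional components are exactly what force $m\ge 3084$ --- the constant has nothing to do with counting boundary swaps. Your closing remarks (that the result covers only groups with a large cyclic component, and that type~\rom{3} groups such as $\Z_7^k$ need a different mechanism) do match the paper's framing, but the engine you propose for the covered case does not produce the claimed count.
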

In this paper we give a wide class of abelian groups for which the lower bound in Conjecture~\ref{conj1} holds, including all abelian groups that have a cyclic component of size at least $3084$ (see Proposition~\ref{prop34} below).

\smallskip

The paper is organised as follows. In the next section we collect together some results that will be applied in our proofs. We prove Theorems~\ref{main} and~\ref{thm2} in Sections~\ref{sec:mainthm} and~\ref{sec:thm2} respectively. In Section~\ref{sec:construct} we give various constructions that provide  lower bounds on $\fm (G)$. Finally, we give some concluding remarks and open problems in Section~\ref{sec:con}.

%%%%%%%%%%
\section{Useful results}\label{sec:use}

\subsection{Maximum size sum-free sets and containers}
The following definitions play an important role in describing the behaviour of $\mu(G)$
for finite abelian groups.
\begin{definition}
Let $G$ be an abelian group of order $n$.
\begin{itemize}
    \item Let $n$ be divisible by a prime $p\equiv 2 \pmod{3}$. Given the smallest such $p$, we  say that $G$ is \emph{type \rom{1}$(p)$}.
    \item If $n$ is not divisible by any prime $p\equiv 2\pmod{3}$, but $3|n$, then we say that $G$ is \emph{type \rom{2}}.
    \item Otherwise, $G$ is \emph{type \rom{3}}.
\end{itemize}
\end{definition}

As summarised in the following theorem,
results from \cite{yan,GR-g} determine
the size of a largest sum-free set for any abelian group. 

\begin{theorem}\label{mu(G)}
Given any finite abelian group $G$, if $G$ is type \rom{1}$(p)$ then $\mu(G)=|G|\left(\frac{1}{3}+\frac{1}{3p}\right)$. Otherwise, if $G$ is type \rom{2} then $\mu(G)=\frac{|G|}{3}$. Finally, if $G$ is type \rom{3} then $\mu(G)=|G|\left(\frac{1}{3}-\frac{1}{3m}\right)$ where $m$ is the exponent (largest order of any element) of $G$.
\end{theorem}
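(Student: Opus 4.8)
The plan is to prove the two inequalities separately: the lower bounds are explicit constructions, while the upper bounds rest on Kneser's theorem combined with an induction on $|G|$.

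For the lower bounds I would, in each case, quotient $G$ onto a suitable cyclic group and pull back a ``middle third'' interval, using the elementary fact that if $\pi\colon G\to Q$ is a surjective homomorphism and $B\subseteq Q$ is sum-free, then $\pi^{-1}(B)$ is sum-free in $G$ (any Schur triple in $\pi^{-1}(B)$ would project to one in $B$). For type \rom{1}$(p)$, write $p=3k+2$; the interval $\{k+1,\dots,2k+1\}\subseteq \Z_p$ is sum-free of size $(p+1)/3$, and pulling it back through a quotient $G\to\Z_p$ (which exists since $p\mid n$) yields a sum-free set of size $\frac{p+1}{3}\cdot\frac np=n\left(\frac13+\frac1{3p}\right)$. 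For type \rom{2}, the preimage of a single nonzero element of $\Z_3$ under $G\to\Z_3$ is sum-free of size $n/3$. For type \rom{3}, let $m$ be the exponent; then $\Z_m$ is a direct factor of $G$, so a surjection $G\to\Z_m$ exists, and writing $m=3k+1$ the interval $\{k+1,\dots,2k\}$ is sum-free of size $(m-1)/3$, whose preimage has size $\frac{m-1}{3}\cdot\frac nm=n\left(\frac13-\frac1{3m}\right)$.

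For the upper bounds I would induct on $|G|$. If $A$ is sum-free then $A\cap(A+A)=\varnothing$, so $|A|+|A+A|\le n$; writing $H$ for the stabiliser of $A+A$ and applying Kneser's theorem gives $|A+A|\ge 2|A|-|H|$, whence $3|A|\le n+|H|$. Moreover $\pi(A)$ is sum-free in $Q:=G/H$, since a Schur triple in $\pi(A)$ would lift to an element of $A$ lying in the $H$-periodic set $A+A$, contradicting $A\cap(A+A)=\varnothing$; consequently $|A|\le |H|\,\mu(Q)$. When $H\ne\{0\}$ this drives the induction: the prime divisors and the exponent of $Q$ divide those of $G$, so the smallest prime $\equiv 2\pmod 3$ dividing $|Q|$ is at least $p$ in type \rom{1} (and if none exists then $\mu(Q)\le|Q|/3$), while the exponent of $Q$ divides $m$ in type \rom{3}; in every case $|H|\,\mu(Q)$ is bounded by the claimed value of $\mu(G)$. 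The aperiodic base case $H=\{0\}$ gives $|A|\le (n+1)/3$, which already suffices for types \rom{1} and \rom{2} (for type \rom{2}, $3\mid n$ sharpens this to $n/3$).

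The hard part is the aperiodic base case for a \emph{non-cyclic} type \rom{3} group. Here $n\equiv1\pmod3$, so Kneser forces only $|A|\le(n-1)/3$, which is strictly larger than the target $n\left(\frac13-\frac1{3m}\right)$ whenever $m<n$; thus Kneser's inequality alone points the wrong way and one must prove that a sum-free set of near-extremal size cannot in fact be aperiodic. I would attack this through the equality case of Kneser's theorem: a sum-free $A$ with $|A+A|$ close to $2|A|-1$ and trivial stabiliser is, by Kemperman's structure theorem for critical pairs (generalising Vosper's theorem), essentially an arithmetic progression, and such a configuration can only attain the required density inside a cyclic subgroup, forcing the exponent of $G$ to be large. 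Making this structural dichotomy quantitative — so that any non-cyclic type \rom{3} group is pushed back into the inductive case with $H\ne\{0\}$ — is the technical heart of the argument, and is precisely what the Green--Ruzsa analysis supplies.
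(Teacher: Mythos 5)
The paper does not prove this theorem at all: it is imported as a known result (types \rom{1} and \rom{2} from Diananda--Yap, the type \rom{3} case from Green--Ruzsa), so there is no in-paper argument to compare yours against. Judged on its own terms, your proposal is correct and complete for all three lower bounds, for the upper bounds in types \rom{1} and \rom{2}, and for the type \rom{3} upper bound when $G$ is cyclic: the Kneser-plus-quotient induction ($3|A|\le n+|H|$ with $H=\mathrm{stab}(A+A)$, the observation that $\pi(A)$ is sum-free in $G/H$ so $|A|\le |H|\,\mu(G/H)$, and the divisibility bookkeeping in the inductive step) is all sound, and you correctly isolate the aperiodic, non-cyclic type \rom{3} case as the only place the argument fails.

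That case, however, is a genuine gap, and the repair you sketch does not work as stated. Kemperman's and Vosper's theorems describe \emph{critical pairs}, i.e.\ they require $|A+A|\le 2|A|-1$. In the regime you must exclude --- $A$ sum-free with trivial stabiliser and $|A|>n\left(\frac13-\frac1{3m}\right)$ --- the only upper bound available on the sumset is $|A+A|\le n-|A|$, which works out to roughly $2|A|+\frac nm-3$; this exceeds $2|A|-1$ as soon as $\frac nm\ge 3$, which is exactly the situation in every non-cyclic type \rom{3} group. Concretely, for $G=\Z_7^2$ the Kneser bound gives $|A|\le 16$ while the target is $14$, and for $|A|=16$ one only gets $|A+A|\le 33=2|A|+1$, so the critical-pair hypothesis is simply not satisfied and no Vosper/Kemperman-type inverse theorem applies. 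One is in the small-doubling regime, where much heavier structure theory is needed; this is precisely why the type \rom{3} value of $\mu(G)$ remained open from 1969 until Green and Ruzsa resolved it in 2005 with an argument substantially more involved than an appeal to critical pairs. Your closing sentence, deferring to ``the Green--Ruzsa analysis,'' in effect concedes that the technical heart of the type \rom{3} upper bound is asserted rather than proved.
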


We will also use the following container result of Green and Ruzsa \cite{GR-g} (the version below is the one stated in \cite{ls}).
\begin{lemma}\cite{GR-g}\label{containers}
Given any finite abelian group $G$ of order $n$ there is a family $\mathcal{F}$ of subsets of $G$ with the following properties.
\begin{enumerate}[label = (\roman*),itemsep=0pt]
    \item Every $F\in\mathcal{F}$ has at most $(\log n)^{-1/9}n^2$ Schur triples.
    \item If $S\subseteq G$ is sum-free, then $S$ is contained in some $F\in\mathcal{F}$.
    \item $|\mathcal{F}|\leq 2^{n(\log n)^{-1/18}}$.
    \item Every member of $\mathcal{F}$ has size at most $\mu(G)+n(\log n)^{-1/50}$.
\end{enumerate}
\end{lemma}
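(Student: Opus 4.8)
The plan is to realise the sum-free subsets of $G$ as the independent sets of a sparse, highly regular $3$-uniform hypergraph, and then to apply the hypergraph container method. Let $H=H(G)$ be the hypergraph on vertex set $G$ whose edges are the Schur triples $\{x,y,z\}$ on three distinct elements with $x+y=z$; every sum-free set is then an independent set of $H$, while the $O(n)$ degenerate relations of the form $2x=z$ or $x+0=x$ number only $O(n)$ and contribute solely to lower-order error terms. A direct count gives $e(H)=\Theta(n^2)$ and maximum degree $\Theta(n)$, while any two vertices $u,v$ lie in at most three common edges, since the remaining vertex $w$ of a Schur triple through $u$ and $v$ must equal one of $u+v$, $u-v$, $v-u$. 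This bounded-codegree condition is exactly the regularity that makes the container method efficient.

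I would then apply the hypergraph container theorem, in the quantitative form of Saxton--Thomason or Balogh--Morris--Samotij, to $H$ with a parameter $\tau=\tau(n)$ of order $(\log n)^{-1/18}$ (up to factors polylogarithmic in $\log n$). A single application attaches to each independent set $I$ a fingerprint $T\subseteq I$ of size $O(\tau n)$ and a container $C(T)\supseteq I$ whose induced subhypergraph has a constant factor fewer edges; iterating this step $O(\log\log n)$ times pushes the number of Schur triples inside each final container below $(\log n)^{-1/9}n^2$, which is property (i). Property (ii) holds automatically, since every independent set persists into some container. The number of containers is bounded by the number of admissible fingerprints, namely $2^{O(\tau n\log(1/\tau))}$ up to the lower-order overhead introduced by the iteration; with $\tau$ as above this is at most $2^{n(\log n)^{-1/18}}$, giving property (iii).

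Property (iv) is not delivered by the container step itself but follows from a supersaturation input. A container $F$ carrying at most $(\log n)^{-1/9}n^2$ Schur triples is nearly sum-free, so by Green's arithmetic removal lemma for abelian groups one may delete a small number of elements from $F$ to destroy every solution of $x+y=z$; the surviving set is genuinely sum-free and hence has size at most $\mu(G)$, whence $|F|\le\mu(G)+(\text{number of deletions})$. The weak quantitative dependence in the removal lemma converts the triple bound $(\log n)^{-1/9}n^2$ into the deletion bound $n(\log n)^{-1/50}$, which is exactly property (iv). I expect the main obstacle to lie precisely in this quantitative bookkeeping: one must choose $\tau$ small enough for the fingerprint count to meet (iii), run enough container iterations for the edge count to meet (i), and still have the removal lemma return a deletion bound compatible with (iv). The mismatch between the exponents $1/9$, $1/18$ and $1/50$ is the visible trace of balancing these three competing constraints against the poor effective constants available in the arithmetic removal lemma.
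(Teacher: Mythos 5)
First, note that the paper does not prove this lemma at all: it is imported verbatim from Green--Ruzsa \cite{GR-g} (in the form restated in \cite{ls}), whose proof is Fourier-analytic, building the containers out of the cells of an arithmetic ``granularization'' rather than out of hypergraph-container fingerprints. Your route via the hypergraph container method is therefore genuinely different, and for properties (i)--(iii) it is viable: the $3$-uniform hypergraph of Schur triples does have $\Theta(n^2)$ edges, maximum degree $\Theta(n)$ and codegree at most $3$ (the third vertex of a triple through $u,v$ is one of $u+v$, $u-v$, $v-u$), so the degree conditions of Balogh--Morris--Samotij/Saxton--Thomason hold for any $\tau\gg n^{-1/2}$, and choosing $\tau$ of order $(\log n)^{-1/18}(\log\log n)^{-2}$ absorbs both the $\log(1/\tau)$ factor in the fingerprint count and the $O(\log\log n)$ iterations needed to drive the edge count below $(\log n)^{-1/9}n^2$. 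That part of the proposal I believe.

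The genuine gap is in your derivation of (iv). You propose to apply Green's arithmetic removal lemma to an \emph{arbitrary} container with at most $(\log n)^{-1/9}n^2$ Schur triples and conclude that it can be made sum-free by deleting $n(\log n)^{-1/50}$ elements. But the removal lemma's quantitative dependence is of tower type: to guarantee a deletion bound of $\varepsilon n$ one needs the triple count to be below $\delta(\varepsilon)n^2$ with $\delta(\varepsilon)$ roughly an inverse tower in a power of $1/\varepsilon$. Feeding in a triple bound of $(\log n)^{-1/9}n^2$ therefore only yields a deletion bound on the order of $n/(\log_* n)^{c}$, which is enormously weaker than $n(\log n)^{-1/50}$; the two exponents $1/9$ and $1/50$ are related \emph{polynomially}, and no general removal lemma for abelian groups delivers a polynomial dependence. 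This is not ``quantitative bookkeeping'' that can be balanced away by tuning $\tau$ --- the removal step is simply too lossy. Green and Ruzsa avoid this because their containers are not arbitrary almost-sum-free sets: they are unions of atoms of a Fourier-analytic partition, and for such structured sets one can prove a supersaturation/size bound directly with only polynomial losses in $\log n$. To rescue your hypergraph-container proof of (iv) you would need an analogous supersaturation input (every subset of $G$ of size $\mu(G)+\varepsilon n$ contains at least $\mathrm{poly}(\varepsilon)\,n^2$ Schur triples), which is a nontrivial theorem in its own right and is not a consequence of the removal lemma.
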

We refer to the sets in $\mathcal{F}$ as \textit{containers}. The result gives us a method to count the number of maximal sum-free sets in an abelian group $G$. Indeed, by (ii) it suffices to count the number of maximal sum-free sets that lie in our containers.

\subsection{Structural results for sum-free sets}

As we wish to obtain sharp bounds on $\fm(\Z_2^k)$ and $\fm(\Z_3^k)$ we must be rather careful about how we count the maximal sum-free sets in each container. As such we need information on the structure of sum-free sets in $\Z_2^k$ and $\Z_3^k$. The following theorem  describes the structure of sum-free sets in $\Z_2^k$.

\begin{thm}\cite{ped, dav}\label{structure Z2}
Let $k\geq 4$ and let $A\subseteq \Z_2^k$ be a sum-free set. If $|A|>5\cdot 2^{k-4}$ then $A$ is contained in a coset $x+U$ where $U\subseteq\Z_2^k$ is a subspace of $\Z_2^k$ and $x\notin U$.
\end{thm}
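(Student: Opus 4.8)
The plan is to fix an index-$2$ subspace of $\Z_2^k$, analyse how $A$ splits across its two cosets, and show that for a well-chosen subspace all of $A$ lies in a single non-trivial coset. Write $n:=2^k$ and note first that $0\notin A$ (else $0+0=0$ is a Schur triple), so sum-freeness of $A$ is exactly the statement that no three elements of $A$ sum to $0$. Fix a non-zero linear functional, i.e.\ an index-$2$ subspace $W$ with $\Z_2^k=W\sqcup(W+e)$ for some $e\notin W$, and write $A=D\sqcup(C+e)$ where $D:=A\cap W$ and $C+e:=A\cap(W+e)$ with $C\subseteq W$. A short case check on how many of the three summands of a putative Schur triple lie in $W+e$ shows that $A$ is sum-free if and only if (i) $D$ is sum-free in $W$, and (ii) $D\cap(C+C)=\varnothing$. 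The goal then becomes to choose $W$ so that $D=\varnothing$, for then $A\subseteq W+e$, which is the asserted coset $x+U$ with $U=W$ and $x=e\notin U$.

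The engine of the argument is a local observation: if $d\in D$ then $d\neq 0$ (as $0\notin A$), so (ii) gives $d\notin C+C$, equivalently $C\cap(C+d)=\varnothing$; these being disjoint subsets of $W$ forces $2|C|\le|W|$, i.e.\ $|C|\le 2^{k-2}=n/4$. Hence it suffices to produce an index-$2$ subspace one of whose cosets meets $A$ in strictly more than $n/4$ points, as the opposite part $D$ is then empty. To find such a coset I would use the vanishing of the Schur-triple count. Writing $\widehat{\mathbf 1_A}(t)=\sum_{x}\mathbf 1_A(x)(-1)^{\langle t,x\rangle}$ and expanding $\sum_{x,y}\mathbf 1_A(x)\mathbf 1_A(y)\mathbf 1_A(x+y)$ by Fourier inversion, the number of ordered triples of $A$ summing to $0$ equals $2^{-k}\sum_{t}\widehat{\mathbf 1_A}(t)^3$; this is $0$ since $A$ is sum-free, so $\sum_t\widehat{\mathbf 1_A}(t)^3=0$. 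As $\widehat{\mathbf 1_A}(0)=|A|$ and $\sum_t\widehat{\mathbf 1_A}(t)^2=n|A|$ by Parseval, bounding the cube sum by the most negative coefficient yields some $t\neq 0$ with $-\widehat{\mathbf 1_A}(t)\ge|A|^2/(n-|A|)$. For that $t$ the coset $\{x:\langle t,x\rangle=1\}$ contains $\tfrac12\bigl(|A|+|A|^2/(n-|A|)\bigr)=\tfrac{n|A|}{2(n-|A|)}$ points of $A$, which exceeds $n/4$ precisely when $|A|>n/3$. The theorem therefore follows immediately whenever $|A|>n/3$.

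The main obstacle is the sharp constant: the Fourier step reaches only the threshold $n/3$, whereas the theorem asserts the conclusion already for $|A|>5\cdot 2^{k-4}=5n/16<n/3$. Closing the interval $(5n/16,\,n/3]$ is where the real work lies, since the value $5/16$ reflects the size of a genuine near-extremal cap that is \emph{not} contained in any coset, so the soft Fourier input must be supplemented by exact structural information. To handle it I would analyse condition (ii) more tightly: taking $W$ to be the best coset direction found above and supposing $D\neq\varnothing$, lower bound $|C+C|$ via Kneser's theorem in the elementary abelian group $W$, writing $|C+C|\ge 2|C|-|H|$ for the stabiliser subspace $H$ of $C+C$. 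Since (ii) gives $|D|\le|W|-|C+C|$ and hence $|A|=|C|+|D|\le n/2-|C|+1$ in the case $H=\{0\}$, this is quickly contradictory with the lower bound $|C|\ge\tfrac{n|A|}{2(n-|A|)}$; the structured case $H\neq\{0\}$ forces $C$ (and hence $A$) to be close to a union of $H$-cosets, and one then passes to a lower-dimensional instance in the quotient $W/H$ and runs an induction on $k$, the base case $k=4$ being a finite check in $\Z_2^4$. The extremal configuration of size $5\cdot 2^{k-4}$ emerges precisely from this residual structured case, and I expect this Kneser-plus-induction analysis of the near-extremal regime, rather than the Fourier concentration step, to be the technically demanding part.
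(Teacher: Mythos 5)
First, a framing point: the paper does not prove this statement at all---Theorem~\ref{structure Z2} is imported as a black box from Clark--Pedersen~\cite{ped} and Davydov--Tombak~\cite{dav}, whose arguments are combinatorial/coding-theoretic classifications of large complete caps, quite unlike your Fourier route. So the only question is whether your proposal stands on its own, and it does not. Your first two paragraphs are correct and cleanly executed: the decomposition $A=D\sqcup(C+e)$ with respect to an index-$2$ subspace $W$, the equivalence of sum-freeness with (i) and (ii), the observation that $D\neq\varnothing$ forces $C\cap(C+d)=\varnothing$ and hence $|C|\le n/4$, and the Fourier computation producing a coset containing at least $\tfrac{n|A|}{2(n-|A|)}$ points of $A$ are all right, and together they do prove the theorem in the range $|A|>n/3$ (which in particular covers $k=4$ and $k=5$, since there $5\cdot2^{k-4}+1>n/3$).

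The gap is the entire third paragraph, i.e.\ the range $5n/16<|A|\le n/3$, which for $k\ge 6$ is nonempty and is where all the content of the theorem sits. Your Kneser step does dispose of the case where the stabiliser $H$ of $C+C$ is trivial: then $|A|\le n/2-|C|+1\le n/2-\tfrac{n|A|}{2(n-|A|)}+1$, which is incompatible with $|A|/n>5/16>1-\tfrac{1}{\sqrt{2}}$. But for $H\neq\{0\}$ you only assert that $A$ is ``close to a union of $H$-cosets'' and that one ``passes to the quotient $W/H$ and runs an induction on $k$.'' This is not yet an argument: no inductive statement is formulated, the candidate object in the quotient is not a sum-free set to which the theorem could be applied ($C$ is not sum-free in general, and its image need not meet the density hypothesis), and the inequality chain you would be running---$|A|\le n/2-(c^*-1)|H|$ with $c^*:=|C+H|/|H|$, against $|C|\ge\tfrac{n|A|}{2(n-|A|)}$---is \emph{exactly saturated} by the genuine extremal example $A=\pi^{-1}(\{e_1,e_2,e_3,e_4,e_1+e_2+e_3+e_4\})$ of size $5n/16$ (there $|H|=n/16$, $c^*=4$, $|C|=n/4$). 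So no crude inequality can close this case; one needs an equality/near-equality analysis of Kneser's theorem in $\mathbb{F}_2$-vector spaces (a Kemperman-type inverse result) plus an argument reassembling $D$ and $C+e$ into a single coset, none of which appears. As written, your proof establishes the theorem only for $|A|>n/3$.
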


The following result of Lev \cite{lev} provides a similar structure for sum-free sets in $\Z_3^k$.

\begin{thm}\cite{lev}\label{structure Z3}
Let $k\geq 3$ and let $A\subseteq \Z_3^k$ be a sum-free set. If $|A|>5\cdot 3^{k-3}$ then $A$ is contained in a coset $g+H$ where $H\subseteq\Z_3^k$ is a subspace of $\Z_3^k$ and $g\notin H$.
\end{thm}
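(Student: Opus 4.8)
The plan is to combine a Fourier-analytic argument on $\Z_3^k$, which locates a codimension-one subspace that $A$ correlates with, with a combinatorial sumset argument that upgrades this correlation into exact containment. Write $n=3^k$, let $\w=e^{2\pi i/3}$, and for $r\in\Z_3^k$ set $\hat 1_A(r)=\frac1n\sum_{x}1_A(x)\w^{-r\cdot x}$, so that $\hat 1_A(0)=\alpha:=|A|/n$. By hypothesis $\alpha>5/27$, while $\Z_3^k$ is type \rom{2} and so Theorem~\ref{mu(G)} gives $\mu(\Z_3^k)=3^{k-1}$, i.e.\ $\alpha\le 1/3$.

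First I would exploit that $A$ is sum-free to control the Fourier spectrum. Since $A$ contains no solution of $x+y=z$, the number of such triples vanishes, and expanding this count over characters yields $\alpha^3+\sum_{r\neq 0}\hat 1_A(r)^2\,\hat 1_A(-r)=0$. Bounding the tail by $\max_{r\neq0}|\hat 1_A(r)|$ times $\sum_{r\neq0}|\hat 1_A(r)|^2=\alpha-\alpha^2$ (Parseval), I obtain a nonzero $r$ with $|\hat 1_A(r)|\ge \alpha^2/(1-\alpha)$. Such an $r$ singles out the index-$3$ subspace $H:=\{x:r\cdot x=0\}$; writing $C_0=H,C_1,C_2$ for its cosets and $a_i:=|A\cap C_i|$, the size of $\hat 1_A(r)$ forces the vector $(a_0,a_1,a_2)$ to be far from uniform, so that after relabelling almost all of $A$ lies in a single coset, say $C_1$.

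The second and harder step is to remove the ``impurities'' in $C_0$ and $C_2$. Here I would read off from sum-freeness the cross-coset constraints coming from the addition table of $\Z_3$: for instance $(A\cap C_1)+(A\cap C_1)\subseteq C_2$ must avoid $A\cap C_2$, any nonzero $x\in A\cap C_0$ makes $x+(A\cap C_1)$ disjoint from $A\cap C_1$ inside $C_1$, and $A\cap C_0$ is itself a sum-free subset of $H\cong\Z_3^{k-1}$ and hence has size at most $3^{k-2}$. Translating $A\cap C_1$ into $H$ and applying Kneser's theorem to its doubling gives $|(A\cap C_1)+(A\cap C_1)|\ge 2a_1-|K|$ for the relevant stabiliser $K$, so that $a_2>0$ forces $a_1$ to be bounded away from $3^{k-1}$; combined with the Fourier concentration and the bound on $a_0$, the density threshold $\alpha>5/27$ is then violated unless $a_0=a_2=0$, i.e.\ $A\subseteq C_1=g+H$.

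The main obstacle is making this last step sharp enough to reach the precise constant $5\cdot 3^{k-3}$ rather than a weaker bound: the Fourier estimate only yields partial concentration (at $\alpha=1/3$ it guarantees $|\hat 1_A(r)|\ge 1/6$, not $\alpha$), so the sumset analysis must carry the rest of the load, and the Kneser step degenerates exactly when $A\cap C_1$ is close to a union of cosets of a nontrivial subspace $K$. I would handle that degenerate case by passing to the quotient $\Z_3^k/K$ and inducting on $k$, with the base case $k=3$ (where the claim reads: every sum-free set of size at least $6$ in $\Z_3^3$ lies in an affine hyperplane) checked directly.
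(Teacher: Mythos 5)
The paper does not prove this statement: it is quoted verbatim from Lev \cite{lev} and used as a black box, so there is no internal proof to compare against. Judging your argument on its own terms, there is a genuine gap in both halves.

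The Fourier computation itself is fine: sum-freeness gives $\alpha^3=-\sum_{r\neq 0}\widehat{1_A}(r)^2\,\widehat{1_A}(-r)$, and Parseval then produces some $r\neq 0$ with $|\widehat{1_A}(r)|\geq \alpha^2/(1-\alpha)$. But at the relevant density, $\alpha$ just above $5/27$, this is only about $0.042$, and since $n^2|\widehat{1_A}(r)|^2=\tfrac12\sum_{i<j}(a_i-a_j)^2$, the bound merely says the three coset counts are not almost exactly equal; it is nowhere near strong enough to conclude that ``after relabelling almost all of $A$ lies in a single coset'', so the launching point of your second step is not available. More seriously, the combinatorial endgame is asserted rather than carried out, and the constraints you actually list cannot force $a_0=a_2=0$. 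Writing $t:=3^{k-3}$, consider the profile $(a_0,a_1,a_2)=(0,3t,3t)$: it has total size $6t>5t$, meets two cosets, satisfies $a_0\le 3^{k-2}$, satisfies every pairwise disjointness/Kneser inequality of the kind you invoke (for instance $2a_1-1\le 9t-a_2$ and $a_1+2a_2-1\le 9t-a_1$ both reduce to $9t\le 9t+1$), and even exhibits $|\widehat{1_A}(r)|=1/9$ at the corresponding character, comfortably above your Fourier threshold. So nothing in the proposed argument excludes this profile, yet the theorem must exclude it; closing that gap is exactly where the difficulty of Lev's theorem lies (the constant $5\cdot 3^{k-3}$ is optimal, so near-extremal configurations cannot be dismissed by crude counting). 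The appeal to induction in the Kneser-degenerate case is likewise only a plan, and the proposed base case $k=3$ is just the theorem itself in dimension three. As a strategy sketch this is reasonable; as a proof it is incomplete.
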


Thus, both Theorems \ref{structure Z2} and \ref{structure Z3} state that either a sum-free set is `small' or has a simple structure; that is, it is contained in a coset.

\subsection{Maximal independent sets in graphs}
Similarly to previous papers on the topic~\cite{BLST, BLST2, ls, wolf}, we translate our problems into the setting of maximal independent sets in graphs. 
As such,
in this subsection we introduce some notation, definitions and useful results for graphs and their maximal independent sets. 

Let $\Gamma=(V,E)$ be a graph. Throughout this paper we consider \textit{graphs $\Gamma$ possibly with loops}; that is, $\Gamma$ can be obtained from a simple graph by adding at most one loop at each vertex. We write $e(\Gamma):=|E|$ for the number of edges in $\Gamma$. Given a vertex $x\in V$ we write $d(x,\Gamma)$ for the \textit{degree} of $x$ in $\Gamma$ (i.e., the number of times $x$ is an  endpoint of an edge in $G$). Note that a loop at $x$ contributes two to the degree of $x$. We write $\delta(\Gamma)$ for the \textit{minimum degree} and $\Delta(\Gamma)$ for the \textit{maximum degree} of $\Gamma$. We write $K_m$ and $C_m$ respectively for the complete graph and the cycle on $m$ vertices. Given graphs $\Gamma$ and $\Gamma'$ we write $\Gamma\square\Gamma'$ for the \textit{cartesian product graph}. So its vertex set is $V(\Gamma)\times V(\Gamma')$ and $(x,y)$ and $(x',y')$ are adjacent in $\Gamma\square\Gamma'$ if (i) $x=x'$ and $y$ and $y'$ are adjacent in $\Gamma'$ or (ii) $y=y'$ and $x$ and $x'$ are adjacent in $\Gamma$. 

We denote by $\mis(\Gamma)$ the number of maximal independent sets in $\Gamma$. Moon and Moser \cite{MM} proved the following bound which holds for any $n$-vertex graph $\Gamma$:
\begin{equation}\label{Moon-Moser}
\mis(\Gamma)\leq 3^{n/3}.
\end{equation}
Note the bound (\ref{Moon-Moser}) is tight; consider a graph consisting of the disjoint union of triangles. However, Hujter and Tuza \cite{HT} improved this bound in the case of  triangle-free graphs $\Gamma$:
\begin{equation}\label{Hujter-Tuza}
\mis(\Gamma)\leq 2^{n/2}.
\end{equation}
If $\Gamma$ is a perfect matching then we have equality in (\ref{Hujter-Tuza}). The following lemma improves the bound (\ref{Moon-Moser}) in the case of somewhat regular and dense graphs.

\begin{lemma}\cite[Equation (3)]{BLST}\label{dense and regular graphs}
Let $k\geq 1$ and let $\Gamma$ be a graph on $n$ vertices. Suppose that $\Delta(\Gamma)\leq k\delta(\Gamma)$ and set $b:=\sqrt{\delta(\Gamma)}$. Then
$$\mis(\Gamma)\leq \sum_{0\leq i\leq n/b}\binom{n}{i}\cdot3^{\left(\frac{k}{k+1}\right)\frac{n}{3}+\frac{2n}{3b}}.$$
\end{lemma}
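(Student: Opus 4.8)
The plan is to encode each maximal independent set $I$ of $\Gamma$ by a small set of ``high-degree'' vertices lying in $I$, after which the rest of $I$ is pinned down (up to the Moon--Moser bound) inside a sparse residual graph. Write $\delta:=\delta(\Gamma)$ and $\Delta:=\Delta(\Gamma)$, so $\Delta\le k\delta$ and $b=\sqrt{\delta}$. We may assume $\delta$ is large, since otherwise $\frac{2n}{3b}\ge \frac n3$ and the claimed bound already exceeds the universal Moon--Moser bound (\ref{Moon-Moser}), so there is nothing to prove.

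First I would run the following deterministic branching process on $\Gamma$. While the current graph contains a vertex of degree at least $b$, select the lowest-indexed such vertex $v$ and record whether $v\in I$: if $v\in I$ (an \emph{in-pick}) delete the closed neighbourhood $N[v]$; if $v\notin I$ (an \emph{out-pick}) delete only $v$. Stop once the surviving graph $\Gamma^*$, on vertex set $V^*$, has maximum degree less than $b$. Let $S\subseteq I$ be the set of in-picks. Since each in-pick deletes at least $b+1$ vertices and deletions are disjoint, $|S|(b+1)\le n$, so $|S|\le n/b$. Crucially, because in-picks lie in the independent set $I$ they are never deleted as neighbours of another in-pick, so every vertex of $I$ is either an in-pick or survives into $V^*$; that is, $I=S\cup(I\cap V^*)$. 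Moreover the entire run is determined by $S$ alone, since at each step the selected vertex is forced by the degree rule and we then delete $N[v]$ or $v$ according to whether $v\in S$. Hence the map $I\mapsto(S,\,I\cap V^*)$ is injective, and one checks using maximality of $I$ that $I\cap V^*$ is a maximal independent set of $\Gamma^*$ (any undeleted non-neighbour of $S$ missed by $I$ retains an $I$-neighbour inside $V^*$). Therefore
\begin{equation*}
\mis(\Gamma)\le \sum_{S:\,|S|\le n/b}\mis(\Gamma^*_S)\le \sum_{0\le i\le n/b}\binom{n}{i}\cdot\max_{S}\,\mis(\Gamma^*_S),
\end{equation*}
and by (\ref{Moon-Moser}) we have $\mis(\Gamma^*_S)\le 3^{|V^*_S|/3}$.

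It remains to bound $|V^*|$, and this edge-counting estimate is the heart of the argument. Every surviving vertex has degree at least $\delta$ in $\Gamma$ but less than $b$ in $\Gamma^*$, hence sends at least $\delta-b$ edges to deleted vertices; meanwhile each deleted vertex receives at most $\Delta\le k\delta$ such edges. Counting the edges between $V^*$ and $V\setminus V^*$ in two ways gives $|V^*|(\delta-b)\le (n-|V^*|)k\delta$, which rearranges (using $b/\delta=1/b$) to $|V^*|\le kn/\bigl((k+1)-1/b\bigr)$. Consequently
\begin{equation*}
\frac{|V^*|}{3}\le \frac{k}{k+1}\cdot\frac n3\cdot\frac{1}{1-\tfrac{1}{(k+1)b}}\le \frac{k}{k+1}\cdot\frac n3+\frac{2n}{3b},
\end{equation*}
where the last step uses $\tfrac{1}{1-x}\le 1+2x$ for small $x$ together with $\tfrac{k}{(k+1)^2}\le 1$. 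Feeding this into the displayed sum yields the lemma. The main obstacle is precisely this residual size bound: it is the only place where the hypothesis $\Delta\le k\delta$ enters, and it is what converts the crude Moon--Moser exponent $n/3$ into the improved $\frac{k}{k+1}\cdot\frac n3$. The two smaller contributions, namely the choice of the in-set $S$ and the $\tfrac{2n}{3b}$ slack, are then routine.
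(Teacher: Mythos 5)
Your argument is correct. Note that the paper itself does not prove this lemma; it is imported verbatim from \cite[Equation~(3)]{BLST}, so there is no in-paper proof to compare against. Your proof is a valid self-contained derivation in the same spirit as the original: extract from a maximal independent set $I$ a small ``core'' $S$ of at most $n/b$ high-degree in-picks, observe that $S$ deterministically reconstructs the residual graph $\Gamma^*_S$ and that $I\setminus S$ is a maximal independent set there, and then apply the Moon--Moser bound (\ref{Moon-Moser}) to $\Gamma^*_S$. The decisive step, bounding $|V^*|$ by double-counting the edges between survivors and deleted vertices via $\Delta(\Gamma)\le k\delta(\Gamma)$, is exactly what produces the $\frac{k}{k+1}$ saving, and your rearrangement $|V^*|\le kn/\bigl((k+1)-1/b\bigr)$ together with $\tfrac{1}{1-x}\le 1+2x$ and $\tfrac{k}{(k+1)^2}\le 1$ correctly absorbs the error into the $\tfrac{2n}{3b}$ term. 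Two minor remarks: the injectivity and maximality claims you sketch do require (and your parenthetical correctly supplies) the observation that an $I$-vertex can only be deleted by being an in-pick itself, and that a surviving non-$I$-vertex cannot have its only $I$-neighbour inside $S$; and the reduction ``assume $\delta$ large'' is not actually needed beyond $\delta\ge 1$, since $x=\tfrac{1}{(k+1)b}\le\tfrac12$ already holds for $k\ge1$, $b\ge1$, while your trivial case disposes of $\delta\le 4$ anyway. Neither point is a gap.
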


We will also use the following refined version of the Moon--Moser bound (\ref{Moon-Moser}).

\begin{lemma}\cite[Lemma 3.5]{ls}\label{stability}
Let $k\in\Z$, $\Delta\in\N$, and $C\geq 3^{\Delta/13}$. Let $\Gamma$ be an $n$-vertex graph with $n+k$ edges and maximum degree $\Delta$, then
$$\mis(\Gamma)\leq C\cdot 3^{\frac{n}{3}-\frac{k}{13\Delta}}.$$
\end{lemma}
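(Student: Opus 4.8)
The plan is to prove the bound by induction on the number of vertices $n$, writing $e(\Gamma)=n+k$ throughout and using the standard deletion identities for counting maximal independent sets. First I would dispose of the regime $k\le 0$: here $3^{-k/(13\Delta)}\ge 1$ and $C\ge 3^{\Delta/13}\ge 1$, so the Moon--Moser bound (\ref{Moon-Moser}) already gives $\mis(\Gamma)\le 3^{n/3}\le C\cdot 3^{n/3-k/(13\Delta)}$. Hence I may assume $k\ge 1$, so that $\Gamma$ genuinely has more edges than a disjoint union of triangles. Next I would strip off isolated vertices one at a time: deleting an isolated vertex $v$ leaves $n-1$ vertices and the same number of edges, so it raises the excess from $k$ to $k+1$, and since every maximal independent set must contain every isolated vertex we have $\mis(\Gamma)=\mis(\Gamma-v)$; the induction hypothesis together with $3^{-1/3-1/(13\Delta)}\le 1$ then closes this case. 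So I may assume $\delta(\Gamma)\ge 1$.

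For the main step I would pick a vertex $v$ of minimum degree $\delta$ and branch on whether a maximal independent set $I$ contains $v$. Every such $I$ either contains $v$, in which case $I\setminus\{v\}$ is a maximal independent set of $\Gamma-N[v]$, or omits $v$, in which case $I$ is itself a maximal independent set of $\Gamma-v$; this yields
\[
\mis(\Gamma)\le \mis(\Gamma-v)+\mis(\Gamma-N[v]).
\]
When $v$ has degree $2$ and its two neighbours $u_1,u_2$ are adjacent, so that $N[v]=\{v,u_1,u_2\}$ is a triangle, I would instead use the sharper three-way identity: maximality forces every maximal independent set to contain exactly one of $v,u_1,u_2$, whence $\mis(\Gamma)\le \sum_{w\in\{v,u_1,u_2\}}\mis(\Gamma-N[w])$. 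In each branch I track how the parameters change: deleting a vertex set $S$ removes $|S|$ vertices and the number $e_S$ of edges meeting $S$, so the new excess becomes $k-e_S+|S|$. Feeding each subgraph into the induction hypothesis (legitimate, since the maximum degree cannot increase and $C$ is fixed) reduces the whole lemma to checking, in every case, that the resulting weighted sum of powers of $3$ is at most $3^{n/3-k/(13\Delta)}$.

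The organising principle is that a disjoint union of triangles is the tight instance: there $\delta=2$, each $N[w]$ is an isolated triangle, its deletion removes $3$ vertices and exactly $3$ edges so the excess is preserved, and the three-way split reproduces $\mis(\Gamma)=3\,\mis(\Gamma-\text{triangle})$ with no gain, consistent with $k=0$. Every deviation from this structure should feed the gain, and I would aim to show this branch by branch: a triangle carrying an external edge, a vertex of degree $\delta\ge 3$, or a degree-$2$ vertex with non-adjacent neighbours each force strictly more incident edges to be deleted in some branch, and I would convert each surplus edge, via the excess bookkeeping, into a factor $3^{-1/(13\Delta)}$ in the bound. The normalisation $C\ge 3^{\Delta/13}$ is there to absorb the one-off deficit incurred when a single branch deletes up to $\Delta$ edges incident to a high-degree neighbour without a matching saving in vertices, so that the same constant $C$ can be reused in the recursion.

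I expect the main obstacle to be exactly this edge-accounting across the full case analysis on $\delta(\Gamma)$ and on the local structure around $v$. In each branch one must show that the loss of $e_S$ edges, which enters the bound as a factor $3^{e_S/(13\Delta)}$ through the excess, is dominated by the $3^{-|S|/3}$ saved from deleting $|S|$ vertices, \emph{uniformly in} $\Delta$; the crude estimate $e_S\le|S|\Delta$ is too weak to close the delicate cases, so the real work lies in exploiting finer structure (for instance that the surplus over a triangle packing is limited, and that a minimum-degree vertex controls its neighbourhood) to sharpen it. Getting all cases to balance simultaneously is what pins down the value $13$ and the threshold $C\ge 3^{\Delta/13}$; the triangle cases are tight and hence the most demanding, whereas the high-minimum-degree cases should leave ample room.
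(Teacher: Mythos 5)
First, a point of reference: the paper does not prove Lemma~\ref{stability} at all --- it is imported verbatim from \cite[Lemma 3.5]{ls} --- so there is no internal proof to compare yours against. Judged on its own terms, what you have written is a plan rather than a proof. The easy reductions are fine (for $k\le 0$ the claim is immediate from (\ref{Moon-Moser}); isolated vertices and triangle components can be stripped off losslessly), and induction with a vertex-deletion recursion is indeed the right framework for this kind of stability version of the Moon--Moser bound. But the entire content of the lemma is the branch-by-branch edge accounting that you explicitly defer (``the real work lies in exploiting finer structure'', ``getting all cases to balance\dots is what pins down the value $13$''). Nothing in the proposal actually verifies that the constant $13$ and the normalisation $C\ge 3^{\Delta/13}$ make all the cases close.

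More seriously, the one recursion you do commit to for the main step provably fails in exactly the cases you flag as the demanding ones. Branch at a minimum-degree vertex $v$ via $\mis(\Gamma)\le\mis(\Gamma-v)+\mis(\Gamma-N[v])$ and suppose $d(v)=1$ with neighbour $u$ of degree $d$. Then $\Gamma-v$ has $n-1$ vertices and excess still $k$, while $\Gamma-N[v]$ has $n-2$ vertices and excess $k+2-d$, so the induction hypothesis yields at best a factor of
$3^{-1/3}+3^{-2/3+(d-2)/(13\Delta)}\ge 3^{-1/3}+3^{-2/3-1/13}>1.1$
relative to the target $C\cdot 3^{n/3-k/(13\Delta)}$, uniformly in $k$ and $\Delta$; such graphs certainly occur with $k\ge 1$ (e.g.\ $K_4$ with a pendant vertex), so the induction does not close. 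A similar computation shows the same two-way branch also overshoots when $\delta(\Gamma)=2$ and the two neighbours of $v$ are non-adjacent. The repair is to use domination rather than the contains/omits dichotomy at low-degree vertices: when $d(v)=1$ every maximal independent set contains exactly one of $v,u$, so $\mis(\Gamma)\le\mis(\Gamma-N[v])+\mis(\Gamma-N[u])$, and the ``$\Gamma-v$'' branch, which gains only $3^{-1/3}$, must never be taken there; the degree-$2$ non-adjacent case needs a further refinement still, since even the three-way bound $\sum_{w\in\{v,u_1,u_2\}}\mis(\Gamma-N[w])$ exceeds the target. Until these low-degree cases are written out and checked against the specific constants, the proposal has a genuine gap.
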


In order to connect our problem of estimating the number of maximal sum-free sets in a group $G$ to the count of the maximal independent sets in a graph, we define the \textit{link graph}. For subsets $B,S\subseteq G$ let $L_S[B]$ be the \textit{link graph of $S$ on $B$} defined as follows. The vertex set of $L_S[B]$ is $B$ and its edge set is composed of the following edges:
\begin{enumerate}[label = (\roman*),itemsep=0pt]
    \item two vertices $x,y\in B$ are adjacent if there exists $s\in S$ such that $\{x,y,s\}$ is a Schur triple;
    \item there is a loop at a vertex $x\in B$ if there exists $s,s'\in S$ such that $\{x,x,s\}$ or $\{x,s,s'\}$ is a Schur triple.
\end{enumerate}

As in \cite{ls} we distinguish two types of edges in the link graph. We call an edge $xy\in E(L_S[B])$ a \textit{type 1 edge} if $x-y=s$ for some $s\in S\cup(-S)$. Otherwise, if $x+y=s$ for some $s\in S$ we call it a \textit{type 2 edge}. Let $\Gamma:=L_S[B]$; we denote by $\Gamma_1$ and $\Gamma_2$ the subgraphs of $\Gamma$ consisting respectively of type 1 and type 2 edges. We write %$e_i(\Gamma)$ for $e(\Gamma_i)$, $i=1,2$ and for an $x\in B$ we write 
$d_i(x,\Gamma)$ for $d(x,\Gamma_i)$ (for $i=1,2$). Note that we might have some edges of both types. In this case they appear in both $\Gamma_1$ and $\Gamma_2$. %Moreover, we will be more specific about the loops. 
Let $x\in B$ such that there is a loop at $x$ in $\Gamma$. If there are $s,s'\in S$ such that $\{x,s,s'\}$ is a Schur triple then we call this loop a \textit{bad loop}. Otherwise, if there is $s\in S$ such that $2x=s$ then we consider this loop as a type 2 edge, so it belongs to $\Gamma_2$.

The following lemma from \cite{BLST} provides a crucial connection between maximal sum-free sets and maximal independent sets in the link graph.

\begin{lemma}\cite{BLST}\label{extension link graph}
Suppose that $B,S\subseteq G$ are both sum-free. If $I\subseteq B$ is such that $S\cup I$ is a maximal sum-free subset of $G$, then $I$ is a maximal independent set in $L_S[B]$.
\end{lemma}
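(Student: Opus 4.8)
The plan is to set up a precise \emph{dictionary} between the Schur triples of $S\cup I$ and the edges and loops of $L_S[B]$, and then read off both independence and maximality from it. The only Schur triples of $S\cup I$ that need to be accounted for are those meeting $I$: a triple contained entirely in $S$ cannot occur because $S$ is sum-free, and a triple contained entirely in $I$ cannot occur because $I\subseteq B$ and $B$ is sum-free. A Schur triple that uses the vertex $x$ of $I$ together with one or two elements of $S$ is, in each of its admissible orderings---$s+s'=x$ or $x+s=s'$ (with $s,s'\in S$), or $2x=s$---precisely one of the configurations that the definition of $L_S[B]$ records as a loop at $x$. A Schur triple using \emph{two distinct} vertices $x,y\in I$ together with one element $s\in S$ is, in each ordering $x+y=s$ (a type $2$ edge) or $x+s=y$ (a type $1$ edge, since then $y-x=s$), exactly an edge $xy$ of $L_S[B]$. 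This correspondence, which uses only the two sum-free hypotheses, is the crux of the argument.

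Given the dictionary, independence of $I$ is immediate. Suppose some $x\in I$ carried a loop, or some pair $x,y\in I$ spanned an edge of $L_S[B]$. By definition of the link graph this yields a Schur triple all of whose members lie in $\{x,y\}\cup S\subseteq S\cup I$, contradicting the fact that $S\cup I$ is sum-free. Hence $I$ contains no edge or loop of $L_S[B]$, i.e.\ $I$ is an independent set. Note that this direction uses only that $S\cup I$ is sum-free, not its maximality.

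For maximality, let $x\in B\setminus I$; since in our applications $B$ is disjoint from $S$ we have $x\notin S\cup I$. It suffices to show that $I\cup\{x\}$ is \emph{not} independent in $L_S[B]$. Suppose it were. Then $I\cup\{x\}\subseteq B$ has no internal Schur triple (as $B$ is sum-free), so by the dictionary the set $S\cup I\cup\{x\}$ contains no Schur triple at all: any such triple would be internal to $S$ (impossible), internal to $I\cup\{x\}$ (impossible), or meet $I\cup\{x\}$ in one or two vertices and hence force a loop or an edge inside $I\cup\{x\}$ (impossible, by independence). Thus $S\cup I\cup\{x\}$ would be a sum-free set strictly containing $S\cup I$, contradicting the maximality of $S\cup I$. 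Therefore $x$ is adjacent to a vertex of $I$ or carries a loop, and $I$ is a maximal independent set in $L_S[B]$.

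I expect the main obstacle to be the bookkeeping in the dictionary: one must check that \emph{every} cyclic ordering of \emph{every} mixed Schur triple matches a genuine edge or loop (and conversely), keeping careful track of the split into type $1$ and type $2$ edges and of the two kinds of loops. The sum-freeness of $B$ is what rules out triples internal to $I\cup\{x\}$ in the maximality step, while the sum-freeness of $S$ rules out triples internal to $S$; both hypotheses are genuinely needed.
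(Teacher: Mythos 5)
Your proof is correct and follows essentially the same route as the source: independence of $I$ falls out of the sum-freeness of $S\cup I$ via the triple/edge dictionary, and maximality of $I$ falls out of the maximality of $S\cup I$; the paper itself does not reprove this lemma but cites \cite{BLST}, remarking only that the argument for abelian groups is identical to the integer case. Your aside that the maximality step requires $x\notin S\cup I$, hence $B\cap S=\emptyset$, is a genuine (if minor) point --- the lemma as stated omits this hypothesis and can fail without it --- but disjointness holds in every application here since $S$ is always chosen from $C=F\setminus B$.
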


Note Lemma \ref{extension link graph} was proven in \cite{BLST} for sum-free subsets of $[n]$ but the proof for abelian groups is identical.

We now introduce some notation that will help generalise a lower bound construction for a group $H$ to a cartesian product $H\times K$.

\begin{definition}
Let $\Gamma_1=(V,E_1)$ and $\Gamma_2=(V,E_2)$ be two graphs with the same vertex set. We define the graph $\Gamma_1\rtimes \Gamma_2:=(V\times\{0,1\},E)$ as follows:
$$E:=\{((x,i),(y,i)) \ | \ (x,y)\in E_1, \ i=0,1\}\cup \{((x,0),(y,1)) \ | \ (x,y)\in E_2\}.$$
\end{definition}

Therefore, intuitively $\Gamma_1\rtimes \Gamma_2$ consists of two copies of $\Gamma_1$ connected by the edges of $\Gamma_2$. The following lemma shows how to build a link graph for a group $H\times K$ from a link graph for $H$. Recall that for a link graph $\Gamma:=L_S[B]$, $\Gamma_i$ denotes the subgraph of $\Gamma$ consisting of type $i$ edges, for $i=1,2$. We also write $\Gamma'$ for the graph obtained from $\Gamma$ by removing bad loops. Note that $\Gamma_i'=\Gamma_i$ for $i=1,2$.

\begin{lemma}\label{generalisation link graph}
Let $H,K$ be abelian groups and $a:=|\{k\in K\ | \ k+k=0_K\}|$. Let $B,S\subseteq H$ be disjoint sum-free sets and $\Gamma:=L_S[B]$. Then $\Tilde{B}:=B\times K$ and $\Tilde{S}:=S\times\{0_K\}$ are disjoint sum-free sets in $G:=H\times K$ and the graph $\Tilde{\Gamma}:=L_{\Tilde{S}}[\Tilde{B}]$ consists precisely of the following vertex-disjoint subgraphs:
one copy of $\Gamma$, $a-1$  copies of $\Gamma'$ and $(|K|-a)/2$ copies of $\Gamma_1\rtimes\Gamma_2$. In particular,
$$\mis(\Tilde{\Gamma})=\mis(\Gamma)\cdot\mis(\Gamma')^{a-1}\cdot\mis(\Gamma_1\rtimes\Gamma_2)^{(|K|-a)/2}.$$
\end{lemma}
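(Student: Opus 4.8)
The plan is to verify the sum-free and disjointness claims directly, then read off the edge set of $\tilde\Gamma$ one coordinate at a time, and finally group the fibres $B\times\{k\}$ according to whether $2k=0_K$. The easy part goes first: any relation $(x,k)+(y,l)=(z,m)$ in $H\times K$ projects to $x+y=z$ in $H$, so a Schur triple inside $\tilde B=B\times K$ (respectively inside $\tilde S=S\times\{0_K\}$) would yield one inside $B$ (respectively $S$); as $B,S$ are sum-free so are $\tilde B,\tilde S$, and $B\cap S=\emptyset$ forces $\tilde B\cap\tilde S=\emptyset$.

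Next I would describe the edges of $\tilde\Gamma$. Since $\tilde S\cup(-\tilde S)=(S\cup(-S))\times\{0_K\}$, a type $1$ edge between $(x,k)$ and $(y,l)$ requires $(x,k)-(y,l)\in(S\cup(-S))\times\{0_K\}$, hence $k=l$ and $x-y\in S\cup(-S)$; thus type $1$ edges stay inside a single fibre $B\times\{k\}$ and there reproduce $\Gamma_1$. A type $2$ edge requires $(x,k)+(y,l)=(s,0_K)$ for some $s\in S$, hence $l=-k$ and $x+y\in S$; thus type $2$ edges join the fibre over $k$ to the fibre over $-k$ following $\Gamma_2$. Consequently no fibre, nor pair of fibres $\{k,-k\}$, is adjacent to any vertex outside it, so $\tilde\Gamma$ is the vertex-disjoint union of the subgraphs induced on these classes and it only remains to identify them.

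Then I would split on the order of $k$. If $k=-k$ (there are $a$ such $k$) both edge types live inside $B\times\{k\}$, giving $\Gamma_1\cup\Gamma_2$, and the loops are the crux. A loop from $2(x,k)=(s,0_K)$ needs $2k=0_K$, automatic here, together with $2x=s$, i.e.\ a type $2$ loop, which is kept. A bad loop, arising from a Schur triple $\{(x,k),(s,0_K),(s',0_K)\}$, forces $k=0_K$ in each of its cases. Hence the fibre $k=0_K$ retains every bad loop and is a copy of $\Gamma$, while each of the $a-1$ involution fibres has no bad loop and is a copy of $\Gamma'$ (recall $\Gamma_1'=\Gamma_1$ and $\Gamma_2'=\Gamma_2$, so only bad loops are affected). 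If instead $k\neq-k$, the $|K|-a$ such elements pair into $(|K|-a)/2$ pairs $\{k,-k\}$; type $1$ edges give a copy of $\Gamma_1$ on each of $B\times\{k\}$ and $B\times\{-k\}$, type $2$ edges give exactly the cross-edges between $(x,k)$ and $(y,-k)$ with $(x,y)\in E(\Gamma_2)$, and no loop survives because $2k\neq0_K$ rules out type $2$ loops and $k\neq0_K$ rules out bad loops. Identifying $B\times\{k\}$ with copy $0$ and $B\times\{-k\}$ with copy $1$, this subgraph is precisely $\Gamma_1\rtimes\Gamma_2$.

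Finally, since $\mis$ is multiplicative over vertex-disjoint unions (a maximal independent set restricts to one in each component, and conversely), multiplying the three counts yields the claimed formula. I expect the only genuinely delicate step to be the loop bookkeeping: checking that bad loops force $k=0_K$ and so survive solely in the single $\Gamma$ copy, that type $2$ loops survive exactly in the $a$ self-paired fibres, and that a type $2$ loop of $\Gamma$ reappears as a cross-edge in the $\Gamma_1\rtimes\Gamma_2$ pairs, exactly matching the definition of $\rtimes$.
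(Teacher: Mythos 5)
Your proposal is correct and follows essentially the same route as the paper's proof: compute the two edge types coordinatewise (type 1 forces $k_1=k_2$, type 2 forces $k_1=-k_2$), split the fibres according to whether $k=0_K$, $k=-k\neq 0_K$, or $k\neq -k$, and identify the induced subgraphs as $\Gamma$, $\Gamma'$ and $\Gamma_1\rtimes\Gamma_2$ respectively. Your extra care with the loop bookkeeping (bad loops forcing $k=0_K$, type 2 loops becoming cross-edges in the paired fibres) is exactly the point the paper relies on, just spelled out more explicitly.
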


\begin{proof}
By definition, $\Tilde{B}$ and $\Tilde{S}$ are disjoint sum-free sets. It is easy to check that $(b_1,k_1), (b_2,k_2)\in \Tilde{B}$ form a type 1 edge in $\Tilde{\Gamma}$ if and only if $b_1$ and $b_2$ form a type 1 edge in $\Gamma$ and $k_1=k_2$. Similarly, $(b_1,k_1), (b_2,k_2)\in \Tilde{B}$ form a type 2 edge in $\Tilde{\Gamma}$ if and only if $b_1$ and $b_2$ form a type 2 edge in $\Gamma$ and $k_1=-k_2$. Then $E_1(\Tilde{\Gamma})=\{((x,k),(y,k)) \ | \ (x,y)\in E_1(\Gamma),\ k\in K\}$ and $E_2(\Tilde{\Gamma})=\{((x,k),(y,-k)) \ | \ (x,y)\in E_2(\Gamma),\ k\in K\}$. Let $k\in K\backslash\{0_K\}$.
\begin{itemize}
    \item If $k=-k$, then the subgraph of $\Tilde{\Gamma}$ induced by the vertices of $B\times\{k\}$ is a copy of $\Gamma'$. Indeed, since $k\ne 0_K$ we cannot have any bad loop in the component induced by $B\times\{k\}$.
    \item If $k\ne-k$, then the subgraph of $\Tilde{\Gamma}$ induced by the vertices of $B\times\{k,-k\}$ is a copy of $\Gamma_1\rtimes\Gamma_2$. Indeed, an edge of this graph is either of the form $((b_1,k),(b_2,k))$ or $((b_1,-k),(b_2,-k))$ with $(b_1,b_2)\in E_1(\Gamma)$, or of the form $((b_1,k),(b_2,-k))$ with $(b_1,b_2)\in E_2(\Gamma)$. Conversely, every edge of one of those two forms is an edge of this subgraph.
\end{itemize}

If $k=0_K$ we have the same situation as in the first point, but with the bad loops, so the subgraph of $\Tilde{\Gamma}$ induced by the vertices of $B\times \{0_K\}$ is a copy of $\Gamma$. There are $a-1$ elements in $K\backslash\{0\}$ such that $k=-k$ and $(|K|-a)/2$ pairs $\{k,-k\}$ with $k\ne-k$. This now immediately yields the desired bound on $\mis(\Tilde{\Gamma})$.
\end{proof}

%%%%%%
\section{Proof of Theorem~\ref{main}}\label{sec:mainthm}
\subsection{Upper bound}
In this section we prove the upper bound in Theorem \ref{main}. For this we will apply Theorem \ref{structure Z2} together with ideas from \cite{BLST2,ls} to show that the number of maximal sum-free sets from a specific class is negligible, and then we will count precisely the remaining maximal sum-free sets.

We apply Lemma \ref{containers} to $G=\Z_2^k$ and we consider any container $F$. By Lemma~\ref{containers}~(i) and a group removal lemma of Green \cite[Theorem 1.4]{G-R}, $F=B\cup C$ with $B$ sum-free and $|C|=o(n)$.
(For example, here we can choose $C$ to be a smallest subset of $F$ so that $B:=F\setminus C$ is sum-free.)

Then, every maximal sum-free set of $G$ in $F$ can be built in the following way:
\begin{enumerate}[label=(\arabic*), itemsep=0pt]
    \item Choose a (perhaps empty) sum-free set $S$ in $C$;
    \item Extend $S$ in $B$ to a maximal one.
\end{enumerate}
This two-step approach to constructing maximal sum-free sets will be crucial to our argument. With this in mind,
we now distinguish four types of maximal sum-free sets.
\begin{enumerate}%[itemsep=0pt,leftmargin=*]
\item[\textbf{Type 0:}] those obtained from a container $F$ where $S$ is chosen to be empty.
    \item[\textbf{Type 1:}] those obtained from a container $F$ where $|B|\leq 5\cdot 2^{k-4}$ and $|S|\geq 1$.
    \item[\textbf{Type 2:}] those obtained from a container $F$ where $|B|> 5\cdot 2^{k-4}$ and  $|S|\geq 2$.
    \item[\textbf{Type 3:}] those obtained from a container $F$ where $|B|> 5\cdot 2^{k-4}$ and an $S$ which is a singleton.
\end{enumerate}

Let $f_{\max}^i(G)$ be the total number of maximal sum-free sets in $G$ of type $i$. Similarly we write $f_{\max}^i(F)$ for the number of maximal sum-free sets of type $i$ that lie in the container $F$. We will show that $f_{\max}^0(G)+f_{\max}^1(G)+f_{\max}^2(G)$ is negligible and then we will quantify  $f_{\max}^3(G)$ precisely. Note that each container $F$ can produce at most one type 0 maximal sum-free set (namely, $B$), so $f_{\max}^0(G)\leq 2^{n(\log n)^{-1/18}}$ by Lemma~\ref{containers}~(iii).

Fix any non-empty sum-free set $S$ in $C$. Note the zero element does not lie in $S$ as $S$ is sum-free. In what follows we count how many ways we can extend $S$ in $B$ to a maximal sum-free subset of $G$. Let $\Gamma:=L_S[B]$ be the link graph of $S$ on $B$.
From Lemma \ref{extension link graph} we have that the number of extensions of $S$ in $B$ to a maximal sum-free set is at most $\mis(\Gamma)$.

\begin{claim}\label{small B}
If $|B|\leq 5\cdot 2^{k-4}=5n/16$ then
$\mis(\Gamma)\leq 3^{5n/48}\leq  2^{0.17n}.$
\end{claim}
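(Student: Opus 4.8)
The plan is to apply the Moon--Moser bound (\ref{Moon-Moser}) directly to the link graph $\Gamma := L_S[B]$. The essential observation is that $\Gamma$ has vertex set $B$, so it is a graph on exactly $|B|$ vertices. Since (\ref{Moon-Moser}) bounds the number of maximal independent sets of any graph in terms of its number of vertices, this immediately gives $\mis(\Gamma) \leq 3^{|B|/3}$.

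The one technical point I would flag is that $\Gamma$ may carry loops, whereas (\ref{Moon-Moser}) is stated for ordinary graphs. This causes no difficulty: a vertex carrying a loop is adjacent to itself, hence cannot lie in any independent set, so deleting all looped vertices produces a loopless graph on at most $|B|$ vertices whose maximal independent sets correspond exactly to those of $\Gamma$. Thus the Moon--Moser estimate applies to this reduced graph and yields $\mis(\Gamma) \leq 3^{|B|/3}$ irrespective of loops.

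Feeding in the hypothesis $|B| \leq 5\cdot 2^{k-4} = 5n/16$ then gives $\mis(\Gamma) \leq 3^{(5n/16)/3} = 3^{5n/48}$, which is the first claimed inequality. For the second inequality I would simply pass to base $2$: writing $3^{5n/48} = 2^{(5/48)(\log_2 3)\,n}$ and noting $(5/48)\log_2 3 = 0.165\ldots < 0.17$ gives $3^{5n/48} \leq 2^{0.17n}$. As both steps amount to a single application of a known bound followed by a short numerical estimate, there is no real obstacle here; the only subtlety is the treatment of loops, which as observed does not affect the Moon--Moser bound.
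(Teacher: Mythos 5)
Your proposal is correct and is exactly the paper's argument: the paper's proof of this claim is simply ``This follows immediately from (\ref{Moon-Moser})'', i.e.\ a direct application of the Moon--Moser bound to the link graph on $|B|\leq 5n/16$ vertices, followed by the same numerical conversion to base $2$. Your extra remark about looped vertices is a harmless (and valid) clarification that the paper leaves implicit, since it states (\ref{Moon-Moser}) for graphs possibly with loops.
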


\begin{proof}
This follows immediately from (\ref{Moon-Moser}).
\end{proof}

We will now consider type 2 and type 3 maximal sum-free sets. Then we suppose $|B|>5\cdot 2^{k-4}$, so by Theorem \ref{structure Z2} we can assume that $B\subseteq x+U$ with $U$ a subspace of $G$ and $x\notin U$. In fact, (by adding elements to the container $F$ if necessary) we may assume $B=x+W$ where $W$ is a subspace with $\dim(W)=k-1$ and $x\notin W$, as $\fm^i(F)\leq \fm^i(F')$ for any $F'\supseteq F$, $i=2,3$. So $|B|=|G|/2=n/2$. Also, as $B$ and $S$ can be taken disjoint, we have $S\subseteq W$.

Under these conditions, we now show that the link graph is regular.

\begin{claim}\label{regularity of the link graph}
The link graph $\Gamma$ is $|S|$-regular.
\end{claim}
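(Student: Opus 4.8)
The plan is to exploit the fact that in $\Z_2^k$ every element is its own inverse, so that $-g=g$ for all $g\in G$ and in particular $S=-S$. Under this identification all three Schur-triple conditions defining adjacency collapse to a single one: for distinct $b_1,b_2\in B$, the triple $\{b_1,b_2,s\}$ is Schur for some $s\in S$ precisely when $b_1+b_2\in S$. Indeed, $b_1+s=b_2$ and $b_2+s=b_1$ both rearrange to $b_1+b_2=s$ since $-s=s$, and $b_1+b_2=s$ is the remaining case. (The fact that each such edge is simultaneously a type 1 and a type 2 edge is irrelevant to the degree, as it remains a single edge of the underlying graph $\Gamma$.) Thus the first step is to reduce the description of $\Gamma$ to: distinct vertices $b_1,b_2$ are adjacent if and only if $b_1+b_2\in S$.

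Next I would rule out loops entirely. Recall $B=x+W$ with $x\notin W$ and $S\subseteq W$. A loop at $b\in B$ would require either $2b\in S$ or $b=s+s'$ for some $s,s'\in S$. The first is impossible because $2b=0_G\notin S$ as $S$ is sum-free. The second is impossible because $s+s'\in W$, whereas $b\in x+W$ lies in the non-trivial coset of $W$, so $b\notin W$. Hence $\Gamma$ has no loops at all, and the degree of a vertex equals the size of its neighbourhood.

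It then remains to compute the degree of an arbitrary vertex $b\in B$. I would show that the map $s\mapsto b+s$ is a bijection from $S$ onto the neighbourhood of $b$ in $\Gamma$. For each $s\in S$, writing $b=x+w$ with $w\in W$, we have $b+s=x+(w+s)$ with $w+s\in W$, so $b+s\in B$; moreover $b+s\neq b$ because $s\neq 0_G$, and $(b+s)+b=s\in S$, so $b+s$ is indeed a neighbour of $b$. The map is injective since $b+s_1=b+s_2$ forces $s_1=s_2$, and it is surjective because any neighbour $b'$ satisfies $b+b'\in S$, whence $b'=b+(b+b')$ lies in the image. Therefore $d(b,\Gamma)=|S|$ for every $b\in B$, which is exactly the claimed $|S|$-regularity.

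The argument is short, and the only point requiring genuine care is the loop analysis: it is the hypothesis $x\notin W$ (so that $B$ avoids the subspace $W$ containing all sums $s+s'$) that guarantees no bad loops arise, and hence that the degree is exactly $|S|$ with no extra loop contribution.
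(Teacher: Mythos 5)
Your proof is correct and follows essentially the same route as the paper: using $-g=g$ in $\Z_2^k$ to collapse the adjacency conditions to $b_1+b_2\in S$, ruling out loops via $2b=0\notin S$ and $s+s'\in W$ while $b\notin W$, and noting that $s\mapsto b+s$ gives exactly $|S|$ distinct neighbours. The extra detail you supply (the explicit bijection and the check that $b+s\in B$) is a fuller write-up of the same argument, not a different one.
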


\begin{proof}
Let $b\in B$; then each $s\in S$ gives rise to exactly one neighbour of $b$ in $\Gamma$, namely $b+s$. Indeed, as $G=\Z_2^k$, if $b_1,b_2\in B$, $b_1+b_2=s$ if and only if $b_2=b_1+s$ if and only if $b_1=b_2+s$. Recall that since $S$ is sum-free, it does not contain the zero element. Then we cannot have loops in $\Gamma$ as $2b=0$ and $b+0=b$ are not valid solutions, and $b+s=s'$ means $b=s+s'\in W$ which is impossible.
\end{proof}

\begin{claim}\label{large S}
If $|S|>10^4$ then $\mis(\Gamma)\leq \left(\frac{n}{200}+1\right)\cdot2^{0.18n}$.
\end{claim}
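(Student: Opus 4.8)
The plan is to apply Lemma~\ref{dense and regular graphs} to the link graph $\Gamma$. By Claim~\ref{regularity of the link graph} the graph $\Gamma=L_S[B]$ is $|S|$-regular, so $\delta(\Gamma)=\Delta(\Gamma)=|S|$ and we may take the regularity ratio in Lemma~\ref{dense and regular graphs} to be $k=1$ (so that $\frac{k}{k+1}=\frac12$). Moreover, since $B=x+W$ with $\dim(W)=k-1$, the graph $\Gamma$ has exactly $N:=|B|=n/2$ vertices. Setting $b:=\sqrt{\delta(\Gamma)}=\sqrt{|S|}$, Lemma~\ref{dense and regular graphs} then gives
$$\mis(\Gamma)\le\sum_{0\le i\le N/b}\binom{N}{i}\cdot 3^{\frac12\cdot\frac{N}{3}+\frac{2N}{3b}}.$$
The hypothesis $|S|>10^4$ is exactly what yields $b>100$, and this single inequality is what controls both factors on the right-hand side.

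I would bound the two factors separately. For the exponential factor, substituting $N=n/2$ the exponent equals $\frac{n}{12}+\frac{n}{3b}$, and since $b>100$ this is at most $\frac{n}{12}+\frac{n}{300}=\frac{13n}{150}$; using $\log_2 3<1.585$ this gives $3^{13n/150}\le 2^{0.138n}$. For the binomial sum, note that $b>2$ forces $N/b<N/2$, so $\binom{N}{i}$ is increasing over the summation range and $\sum_{0\le i\le N/b}\binom{N}{i}\le(\lfloor N/b\rfloor+1)\binom{N}{\lfloor N/b\rfloor}$. The number of terms satisfies $\lfloor N/b\rfloor+1\le N/b+1<n/200+1$. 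For the largest binomial coefficient I would apply the standard entropy estimate $\binom{N}{m}\le 2^{NH(m/N)}$, where $H$ denotes the binary entropy function; since $m/N\le 1/b\le 1/100$ and $H$ is increasing on $[0,1/2]$ with $H(1/100)<0.081$, this yields $\binom{N}{\lfloor N/b\rfloor}\le 2^{0.081N}=2^{0.0405n}$.

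Multiplying the three estimates gives
$$\mis(\Gamma)\le\left(\frac{n}{200}+1\right)\cdot 2^{(0.138+0.0405)n}\le\left(\frac{n}{200}+1\right)\cdot 2^{0.18n},$$
as required. The only delicate point — and the place I expect to have to be careful — is the arithmetic bookkeeping of the constants: one must verify that the two exponents $0.138$ and $0.0405$ sum to strictly below $0.18$, so that a small amount of slack remains. That slack is supplied precisely by the threshold $|S|>10^4$ (equivalently $b>100$), which simultaneously shrinks the $\frac{2N}{3b}$ term in the exponent and forces the summation range $0\le i\le N/b$ to lie well below $N/2$, keeping the entropy factor small.
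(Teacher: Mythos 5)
Your proof is correct and follows essentially the same route as the paper: both apply Lemma~\ref{dense and regular graphs} with $k=1$ and $b=\sqrt{|S|}>100$ to the $|S|$-regular graph $\Gamma$ on $|B|=n/2$ vertices, bound the exponent by $3^{13n/150}$, and control the binomial sum by (number of terms) times (largest term). The only cosmetic difference is that you estimate $\binom{N}{\lfloor N/b\rfloor}$ via the binary entropy bound where the paper uses $(100e)^{n/200}$; both yield roughly $2^{0.04n}$ and the arithmetic closes in either case.
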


\begin{proof}
We apply Lemma \ref{dense and regular graphs} with $k=1$ and $b\geq 100$. Therefore,
$$\mis(\Gamma)\leq\sum_{0\leq i\leq |B|/100}\binom{|B|}{i}\cdot 3^{\frac{|B|}{6}+\frac{2|B|}{300}}\leq \left(\frac{n}{200}+1\right)\cdot(100e)^{n/200}\cdot3^{13n/150}<\left(\frac{n}{200}+1\right)\cdot 2^{0.18n}.$$
\end{proof}

\begin{claim}\label{medium S}
If $4\leq |S|\leq 10^4$ then $\mis(\Gamma)\leq 3^{10^4/13}\cdot 2^{0.249n}$.
\end{claim}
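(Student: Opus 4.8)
The plan is to apply the refined Moon--Moser bound of Lemma~\ref{stability}, exploiting the fact that $\Gamma$ is $|S|$-regular on $N := |B| = n/2$ vertices (Claim~\ref{regularity of the link graph}). Since $\Gamma$ is $|S|$-regular, its maximum degree is $\Delta = |S|$ and it has $e(\Gamma) = \tfrac12 |S| N = |S| \cdot 2^{k-2}$ edges. Writing $e(\Gamma) = N + k'$ gives $k' = e(\Gamma) - N = 2^{k-2}(|S|-2) = \tfrac{n(|S|-2)}{4}$, which is a positive integer because $|S| \geq 4$.

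Next I would invoke Lemma~\ref{stability} with $\Delta = |S|$ and $C := 3^{|S|/13}$ (so that the hypothesis $C \geq 3^{\Delta/13}$ holds with equality). This yields
$$\mis(\Gamma) \leq 3^{|S|/13} \cdot 3^{\frac{N}{3} - \frac{k'}{13|S|}} = 3^{|S|/13} \cdot 3^{\frac{n}{6} - \frac{n(|S|-2)}{52|S|}}.$$
Since $|S| \leq 10^4$, the prefactor $3^{|S|/13}$ is at most $3^{10^4/13}$, matching the constant appearing in the claim. It therefore remains to bound the remaining exponent.

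The final step is a routine estimate of the exponent $\tfrac{n}{6} - \tfrac{n(|S|-2)}{52|S|}$, followed by a change of base from $3$ to $2$. Rewriting the exponent as $n\bigl(\tfrac{23}{156} + \tfrac{1}{26|S|}\bigr)$ shows it is decreasing in $|S|$, so the worst case is $|S| = 4$; there the exponent equals $n\bigl(\tfrac{23}{156} + \tfrac{1}{104}\bigr) < 0.15706\,n$. Using $\log_2 3 < 1.585$, this gives $3^{0.15706 n} < 2^{0.249 n}$, which completes the bound. I do not expect a genuine obstacle here: the argument is a direct application of Lemma~\ref{stability}, and the only points requiring care are keeping the vertex count $N = n/2$ distinct from the group order $n$, correctly identifying $\Delta = |S|$ and the edge surplus $k'$, and carrying the base-$3$-to-base-$2$ conversion with enough precision to land just below the stated $2^{0.249n}$.
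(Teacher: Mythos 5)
Your proof is correct and follows essentially the same route as the paper: both apply Lemma~\ref{stability} with $\Delta=|S|$ and edge surplus $\frac{|B|(|S|-2)}{2}$ coming from the $|S|$-regularity of $\Gamma$ on $|B|=n/2$ vertices, observe the exponent is worst at $|S|=4$ where it equals $\frac{49}{312}n$, and convert base $3$ to base $2$ to land under $2^{0.249n}$. Your only cosmetic deviation is taking $C=3^{|S|/13}$ and then bounding it by $3^{10^4/13}$, versus the paper's direct choice $C=3^{10^4/13}$; the two are interchangeable.
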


\begin{proof}
Since $\Gamma$ is $|S|$-regular, $e(\Gamma)=\frac{|B||S|}{2}=|B|+\frac{|B|(|S|-2)}{2}$. So by Lemma~\ref{stability}, setting $C:=3^{10^4/13}$, $\Delta:=|S|$ and $k:=\frac{|B|(|S|-2)}{2}$, we have,

$$\mis(\Gamma)\leq C\cdot3^{\frac{|B|}{3}-\frac{|B|(|S|-2)}{26|S|}}\leq C\cdot 3^{49n/312}<C\cdot 2^{0.249n}.$$
\end{proof}

\begin{claim}\label{2S}
If $|S|=2$, $\mis(\Gamma)= 2^{n/8}$.
\end{claim}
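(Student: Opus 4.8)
The plan is to exploit the fact that, by Claim~\ref{regularity of the link graph} with $|S|=2$, the link graph $\Gamma$ is $2$-regular and loopless, hence a disjoint union of cycles; the task then reduces to identifying those cycles precisely using the group structure. Writing $S=\{s_1,s_2\}$, note that $s_1,s_2$ are distinct and nonzero (as $S$ is sum-free), so $s_1+s_2\neq 0$ and $s_1+s_2\notin\{s_1,s_2\}$. Thus the subgroup $\langle s_1,s_2\rangle=\{0,s_1,s_2,s_1+s_2\}$ has order exactly $4$ (so $\langle s_1,s_2\rangle\cong\Z_2^2$). Since $s_1,s_2\in W$, this subgroup lies in $W$, and therefore $B=x+W$ is a union of cosets of $\langle s_1,s_2\rangle$.

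First I would show that each such coset $\{b,b+s_1,b+s_2,b+s_1+s_2\}$ induces a copy of $C_4$ in $\Gamma$. The two neighbours of $b$ are $b+s_1$ and $b+s_2$; the neighbours of $b+s_1$ are $b$ and $b+s_1+s_2$; the neighbours of $b+s_2$ are $b$ and $b+s_1+s_2$; and the neighbours of $b+s_1+s_2$ are $b+s_1$ and $b+s_2$. This yields the $4$-cycle on $b,\,b+s_1,\,b+s_1+s_2,\,b+s_2$. Because every vertex of $\Gamma$ has both of its neighbours inside its own coset of $\langle s_1,s_2\rangle$, these $C_4$'s are exactly the connected components of $\Gamma$, and there are $|B|/4=(n/2)/4=n/8$ of them.

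Finally I would invoke $\mis(C_4)=2$: the only maximal independent sets of a $4$-cycle are its two pairs of opposite vertices (any single vertex extends, and no three vertices of $C_4$ are independent). Since the number of maximal independent sets is multiplicative over vertex-disjoint components, it follows that $\mis(\Gamma)=\mis(C_4)^{n/8}=2^{n/8}$, as required. I do not expect a genuine obstacle; the only point needing a little care is the verification that the components have length exactly $4$ rather than being longer cycles, and this is precisely where the fact that $\langle s_1,s_2\rangle$ has order $4$ does the work.
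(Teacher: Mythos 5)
Your proof is correct and follows essentially the same route as the paper: both identify $\Gamma$ as a disjoint union of $n/8$ copies of $C_4$ on the cosets $\{b,b+s_1,b+s_1+s_2,b+s_2\}$ and conclude via $\mis(C_4)=2$. Your additional justification that $\langle s_1,s_2\rangle$ has order $4$ simply makes explicit a step the paper leaves to the reader.
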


\begin{proof}
When $s_1\ne s_2$, $L_{\{s_1,s_2\}}[B]$ is the disjoint union of $n/8$ cycles $C_4$ of the form $\{b,b+s_1,b+s_1+s_2,b+s_2\}$. Since $\mis(C_4)=2$, $\mis(L_{\{s_1,s_2\}}[B])=2^{n/8}$.
\end{proof}

\begin{claim}\label{3S}
If $|S|=3$, $\mis(\Gamma)= 6^{n/16}<2^{0.17n}$.
\end{claim}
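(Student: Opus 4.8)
The plan is to mirror the argument for the case $|S|=2$ in Claim~\ref{2S}: first show that the link graph $\Gamma=L_S[B]$ splits into identical small components, and then count the maximal independent sets in a single component. Write $S=\{s_1,s_2,s_3\}$. Since $S$ is sum-free and we work in $\Z_2^k$, no element of $S$ is the sum of the other two; as three distinct nonzero vectors in $\Z_2^k$ can only be linearly dependent via $s_1+s_2+s_3=0$ (equivalently $s_1+s_2=s_3$), this shows that $s_1,s_2,s_3$ are linearly independent over $\Z_2$, so $\langle S\rangle$ is a subspace of order $8$. Recall from the proof of Claim~\ref{regularity of the link graph} that the edges of $\Gamma$ are exactly the pairs $\{b,b+s\}$ with $s\in S$; hence $\Gamma$ is the Cayley graph on the coset $B=x+W$ with connection set $S\subseteq W$. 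Its connected components are therefore the cosets of $\langle S\rangle$ lying in $B$, and since $|B|=n/2$ and $|\langle S\rangle|=8$, there are exactly $n/16$ of them. Each component is the Cayley graph of $\Z_2^3$ with generators $s_1,s_2,s_3$, i.e.\ a copy of the $3$-dimensional hypercube $Q_3=C_4\square K_2$.

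It then remains to compute $\mis(Q_3)$, and I would argue that $\mis(Q_3)=6$ as follows. The graph $Q_3$ is bipartite, the two classes being the even- and odd-weight vertices, each an independent set of size $4$; these yield two maximal independent sets. Any maximal independent set meeting both classes must contain, together with a vertex $v$, some opposite-class vertex non-adjacent to $v$; in $Q_3$ the unique such vertex is the antipode $\bar v$ of $v$, and the pair $\{v,\bar v\}$ already dominates all remaining vertices, so every such "mixed" maximal independent set is one of the four antipodal pairs. Finally, any independent set contained in a single colour class that is not the whole class can be enlarged within that class, so is not maximal. Hence the six maximal independent sets are precisely the two colour classes and the four antipodal pairs, giving $\mis(Q_3)=6$.

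Combining the two parts via vertex-disjointness yields $\mis(\Gamma)=\mis(Q_3)^{n/16}=6^{n/16}$, and since $\tfrac{1}{16}\log_2 6=\tfrac{1}{16}(1+\log_2 3)\approx 0.1616<0.17$, we obtain $6^{n/16}<2^{0.17n}$, as required. The only genuinely delicate point is the exact evaluation $\mis(Q_3)=6$: it is a finite check on an $8$-vertex graph, and the bipartite/antipodal description above is the clean way to confirm that no further maximal independent sets occur (in particular, that none of size $3$ arise, since every size-$3$ independent set either lies inside a colour class and extends, or forces an antipodal pair that admits no third vertex).
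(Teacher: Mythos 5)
Your proof is correct and follows essentially the same route as the paper: sum-freeness forces $s_1,s_2,s_3$ to be linearly independent over $\Z_2$, so $\Gamma$ decomposes into $n/16$ disjoint copies of the cube $K_2\square K_2\square K_2$, each with $6$ maximal independent sets. The only difference is that you supply an explicit verification that $\mis(Q_3)=6$ (two colour classes plus four antipodal pairs), which the paper leaves as a finite check via its Figure~1.
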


\begin{proof}
Similarly to the previous claim, when $\{s_1,s_2,s_3\}$ is sum-free, $L_{\{s_1,s_2,s_3\}}[B]$ consists this time of $n/16$ disjoint copies of the cube $K_2\square K_2\square K_2$, which has 6 maximal independent sets. Indeed, as $\{s_1,s_2,s_3\}$ is sum-free, we can consider it as a family of linearly independent vectors over $\Z_2$, then the component of $\Gamma$ containing $b\in B$ is as in Figure 1. Hence $\mis(\Gamma)= 6^{n/16}$.
\begin{center}
\begin{tikzpicture}[scale=3.5]
\draw (0,0,0) -- ++(0,1,0) -- ++(1,0,0) -- ++(0,-1,0) -- cycle;
\draw (0,0,1) -- ++(0,1,0) -- ++(1,0,0) -- ++(0,-1,0) -- cycle;
\draw (0,0,1) -- ++(0,0,-1);
\draw (0,1,1) -- ++(0,0,-1);
\draw (1,0,1) -- ++(0,0,-1);
\draw (1,1,1) -- ++(0,0,-1);
\node at (-0.1,0,1) {$b$};
\node at (-0.2,1,1) {$b+s_3$};
\node at (1.25,0,1) {$b+s_1$};
\node at (1.45,1,0) {$b+s_1+s_2+s_3$};
\node at (1.32,0,0) {$b+s_1+s_2$};
\node at (-0.2,0,0) {$b+s_2$};
\node at (-0.32,1,0) {$b+s_2+s_3$};
\node at (1.35,1,1) {$b+s_1+s_3$};
\fill[black] (0,0,0) circle (1pt);
\fill[black] (0,0,1) circle (1pt);
\fill[black] (0,1,0) circle (1pt);
\fill[black] (0,1,1) circle (1pt);
\fill[black] (1,0,0) circle (1pt);
\fill[black] (1,0,1) circle (1pt);
\fill[black] (1,1,0) circle (1pt);
\fill[black] (1,1,1) circle (1pt);
\end{tikzpicture}

\textbf{Figure 1 - } The cube $K_2\square K_2\square K_2$.
\end{center}
\end{proof}

\begin{claim}\label{type 1 and 2 msfs}
$f_{\max}^0(G)+f_{\max}^1(G)+f_{\max}^2(G)\leq o(1)\cdot2^{n/4}$.
\end{claim}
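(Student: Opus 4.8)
The plan is to bound $f_{\max}^0(G)$, $f_{\max}^1(G)$ and $f_{\max}^2(G)$ separately, show each equals $2^{cn+o(n)}$ for some constant $c<1/4$, and then sum. Two facts from the setup will be used repeatedly: there are at most $N:=2^{n(\log n)^{-1/18}}=2^{o(n)}$ containers by Lemma~\ref{containers}(iii), and each container splits as $F=B\cup C$ with $|C|=o(n)$, so the number of sum-free sets $S\subseteq C$ is at most $2^{|C|}=2^{o(n)}$. The Type 0 bound is then immediate, since each container yields at most one Type 0 set, giving $f_{\max}^0(G)\le N=2^{o(n)}$.

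For Type 1, I fix a container with $|B|\le 5n/16$. By Lemma~\ref{extension link graph}, each nonempty sum-free $S\subseteq C$ extends to at most $\mis(L_S[B])$ maximal sum-free sets, and Claim~\ref{small B} bounds this by $2^{0.17n}$ irrespective of $S$. Summing over the at most $2^{o(n)}$ choices of $S$ and then over the at most $N=2^{o(n)}$ containers gives $f_{\max}^1(G)\le 2^{0.17n+o(n)}$.

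For Type 2, I fix a container with $|B|>5n/16$, so that Claim~\ref{regularity of the link graph} applies and $L_S[B]$ is $|S|$-regular. The key idea is to partition the sum-free sets $S\subseteq C$ with $|S|\ge 2$ by their size and feed each range into the matching earlier claim: Claim~\ref{2S} for $|S|=2$, Claim~\ref{3S} for $|S|=3$, Claim~\ref{medium S} for $4\le|S|\le 10^4$, and Claim~\ref{large S} for $|S|>10^4$. The largest of these four bounds is the one from Claim~\ref{medium S}, namely $3^{10^4/13}\cdot 2^{0.249n}$. Multiplying by the at most $2^{o(n)}$ choices of $S$ and the at most $N=2^{o(n)}$ containers yields $f_{\max}^2(G)\le 2^{0.249n+o(n)}$. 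Since $0.17<0.249<1/4$, each of the three contributions is $o(2^{n/4})$, and hence so is their sum.

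The main obstacle is the Type 2 estimate: the exponent $0.249$ coming from Claim~\ref{medium S} lies only just below $1/4=0.25$, so the argument has almost no slack. I must check that every subexponential and constant factor---the $2^{n(\log n)^{-1/18}}$ from the container count, the $2^{|C|}$ from the choice of $S$, and the fixed constant $3^{10^4/13}$---is genuinely absorbed into $2^{o(n)}$ and does not push the exponent past $1/4$. This holds because $n(\log n)^{-1/18}=o(n)$, $|C|=o(n)$, and $3^{10^4/13}$ is a constant, so for large $n$ the total is at most $2^{0.2495n}=o(2^{n/4})$; but it is the one place where the precise constants genuinely matter.
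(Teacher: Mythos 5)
Your proposal is correct and follows essentially the same route as the paper: bound the at most $2^{o(n)}$ containers and $2^{o(n)}$ choices of $S$ per container, handle Type 0 trivially, use Claim~\ref{small B} for Type 1, and for Type 2 take the worst of Claims~\ref{large S}--\ref{3S}, which is the $2^{0.249n}$ bound of Claim~\ref{medium S}, leaving just enough room below $2^{n/4}$. The paper phrases the Type 2 step slightly more abstractly (``there are constants $C,\alpha>0$ with $\mis(\Gamma)\le C\cdot 2^{(1/4-\alpha)n}$''), but the content is identical.
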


\begin{proof}
We saw already that $f_{\max}^0(G)\leq 2^{n(\log n)^{-1/18}}$. Now we count type 2 maximal sum-free sets.
By Lemma \ref{containers} there are at most $2^{n(\log n)^{-1/18}}$ containers $F$. For each choice of $F$ there are $2^{o(n)}$ choices for $S$. By Claims~\ref{large S}--\ref{3S}, there are two constants $C,\alpha>0$ such that for each fixed $S$ with $|S|\geq 2$, $\mis(\Gamma)\leq C\cdot2^{(1/4-\alpha)n}$. Finally, by Lemma \ref{extension link graph},
$$f_{\max}^2(G)\leq C\cdot2^{n(\log n)^{-1/18}}\cdot 2^{o(n)}\cdot 2^{(1/4-\alpha)n}.$$
Therefore, for $n$ sufficiently large, $f_{\max}^2(G)\leq C\cdot 2^{(1/4-\alpha/2)n}=o(1)\cdot2^{n/4}$.
The analogous argument applied with Claim~\ref{small B} implies that $f_{\max}^1(G)\leq o(1)\cdot2^{n/4}$.
\end{proof}

\begin{claim}\label{type 3 msfs}
$f_{\max}^3(G)\leq \frac{(n-2)(n-1)}{2}\cdot 2^{n/4}.$
\end{claim}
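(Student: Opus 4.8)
The plan is to count type 3 maximal sum-free sets directly from their structural data rather than by summing over containers. This is the essential difference from the type 2 argument in Claim~\ref{type 1 and 2 msfs}: here $\mis(\Gamma)$ will turn out to equal $2^{n/4}$ exactly, so we cannot afford to lose the factor $2^{o(n)}$ coming from the number of containers, and must instead exploit the fact that the relevant data is governed by a hyperplane, of which there are only polynomially many.

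First I would record the shape of the link graph. For a type 3 set we are in the regime $|B|>5\cdot 2^{k-4}$, so by the reduction already carried out we may take $B=x+W$ with $W$ a subspace of dimension $k-1$ and $x\notin W$; thus $B=G\setminus W$ is the unique nontrivial coset of $W$ and $|B|=n/2$. Writing $S=\{s\}$ with $s\in W\setminus\{0\}$, Claim~\ref{regularity of the link graph} gives that $\Gamma=L_{\{s\}}[B]$ is $1$-regular and loopless, i.e.\ a perfect matching in which each $b\in B$ is joined only to $b+s$. A perfect matching on $n/2$ vertices has $n/4$ edges and exactly $2^{n/4}$ maximal independent sets, so $\mis(\Gamma)=2^{n/4}$.

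Next I would set up the direct count. Any type 3 maximal sum-free set $M$ can be written as $M=\{s\}\cup I$, where $I=M\cap B$ is a maximal independent set in $L_{\{s\}}[B]$ by Lemma~\ref{extension link graph}, and where $B=G\setminus W$ is completely determined by the hyperplane $W$. Hence $M$ is the image of the triple $(W,s,I)$ under the map $(W,s,I)\mapsto\{s\}\cup I$, and every type 3 set arises from at least one such triple. It therefore suffices to bound the number of admissible triples $(W,s,I)$ with $W$ a hyperplane of $\Z_2^k$, $s\in W\setminus\{0\}$, and $I$ a maximal independent set of $L_{\{s\}}[G\setminus W]$.

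Finally I would count the triples. The hyperplanes of $\Z_2^k$ are the kernels of the nonzero linear functionals, of which there are $2^k-1=n-1$; this is polynomial in $n$, which is exactly what lets us bypass the container count. For each hyperplane $W$ there are $|W|-1=n/2-1$ choices of $s\in W\setminus\{0\}$, and for each pair $(W,s)$ there are $\mis(L_{\{s\}}[G\setminus W])=2^{n/4}$ choices of $I$. Multiplying, $f_{\max}^3(G)\leq (n-1)(n/2-1)\cdot 2^{n/4}=\frac{(n-2)(n-1)}{2}\cdot 2^{n/4}$, as claimed. The one step requiring real care, and the likely main obstacle, is the justification that one may legitimately count by the pair $(W,s)$ instead of by containers; this rests on the structural fact, already established via Theorem~\ref{structure Z2} in the type 3 regime, that the sum-free part $B$ of any relevant container equals (after enlarging) the nontrivial coset of a hyperplane, so that the only data distinguishing the extensions are $W$ and the single point $s$.
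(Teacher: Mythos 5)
Your proposal is correct and follows essentially the same route as the paper: both arguments reduce to summing $\mis(L_{\{s\}}[x+W])=2^{n/4}$ over the $(n-1)$ hyperplane cosets $x+W$ and the $n/2-1$ choices of $s\in W\setminus\{0\}$, accepting over-counting since only an upper bound is needed. The justification you flag as the delicate step (replacing the container sum by a sum over pairs $(W,s)$, using the enlargement $B=x+W$ from Theorem~\ref{structure Z2}) is exactly how the paper handles it as well.
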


\begin{proof}
To give an upper bound on $\fm^3(G)$ recall it suffices to assume each relevant container is of the form $F=B\cup C$ where $B=x+W$ and $W$ is a subspace of $G$ with $\dim(W)=k-1$; $x\notin W$; $|C|=o(n)$. Moreover, for type 3 maximal sum-free sets, $S$ is a singleton. Thus, to upper bound $\fm^3(G)$ it suffices to sum up the number of maximal independent sets in all link graphs of the form $L_{\{s\}}[x+W]$ where $\dim(W)=k-1$; $x\notin W$ and $\{s\}\subseteq W$ is a sum-free singleton, so any singleton except $\{0\}$.

Let $W\subseteq G$ be any $(k-1)$-dimensional subspace of $G$, $x\notin W$ and $s\in W\backslash\{0\}$. Let $\Gamma:=L_{\{s\}}[x+W]$ be the link graph of $\{s\}$ on $x+W$. By Claim \ref{regularity of the link graph}, $\Gamma$ is $1$-regular so it is a perfect matching between the elements of $x+W$. Since $|x+W|=n/2$, $\mis(\Gamma)=2^{|x+W|/2}=2^{n/4}$.

Now we have to count how many such pairs $(x+W,\{s\})$ we have. There are $n-1$ subspaces $W$ in $G$ of dimension $k-1$, and exactly one coset for each, thus there are $n-1$ choices for $x+W$. Once $x+W$ is fixed, since $\{s\}$ is a sum-free singleton in $W$, we have $|W\backslash\{0\}|=n/2-1$ choices for $\{s\}$. It may be that we count the same maximal sum-free set for two different pairs $(x+W,\{s\})$, but here we are just looking for an upper bound so we do not mind over-counting yet. Overall, we have $(n-1)(n/2-1)=(n-1)(n-2)/2$ such pairs and $2^{n/4}$ maximal sum-free sets generated by each pair, thus $f_{\max}^3(G)\leq \frac{(n-2)(n-1)}{2}\cdot 2^{n/4}$.
\end{proof}
Since every maximal sum-free subset of $G$ lies in a container,  Claims \ref{type 1 and 2 msfs} and \ref{type 3 msfs} give us the desired upper bound: 
$$f_{\max}(G)\leq f_{\max}^0(G)+ f_{\max}^1(G)+f_{\max}^2(G)+f_{\max}^3(G)\leq \left(\binom{n-1}{2}+o(1)\right) 2^{n/4}.$$

\subsection{Lower bound}\label{sec:lb Z2}

In this subsection we show that type 3 maximal sum-free sets are those that contribute to the dominant term in the asymptotic development of $f_{\max}(G)$. For a coset $B=x+W$ with $W\subseteq G$ a $(k-1)$-dimensional subspace; $x\notin W$ and $S\subseteq W$ sum-free, we say that the pair $(B,S)$ \textit{generates} a sum-free set $A$ if there is a maximal independent set $I$ in the link graph $L_S[B]$ such that $A=I\cup S$.

\begin{claim}\label{dominant term Z2}
Consider any coset $B=x+W$ with $W\subseteq G$ a $(k-1)$-dimensional subspace; $x\notin W$ and any singleton $\{s\}\subseteq W\backslash\{0\}$. Then  $(B,\{s\})$ generates at least $2^{n/4}-(n/2-2)\cdot2^{n/8}$ maximal sum-free sets.
\end{claim}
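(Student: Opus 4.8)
The plan is to exploit the fact, established in the proof of Claim~\ref{type 3 msfs}, that $\Gamma := L_{\{s\}}[x+W]$ is a perfect matching on the $n/2$ vertices of $B := x+W$, whose edges are exactly the pairs $\{b, b+s\}$. Thus $\Gamma$ has precisely $2^{n/4}$ maximal independent sets $I$, each selecting one endpoint from every matching edge. First I would record two easy facts: since $B\subseteq x+W$ with $x\notin W$ and $2x=0$, any subset of $B$ is sum-free, and a short check of Schur triples shows $I\cup\{s\}$ is sum-free for every such $I$; moreover, as $s\in W$ is disjoint from $B$, distinct choices of $I$ yield distinct sets $I\cup\{s\}$. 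Hence it suffices to bound from below the number of $I$ for which $I\cup\{s\}$ is \emph{maximal}, equivalently to bound from above the number of ``bad'' $I$.

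The key step is to characterise maximality. Writing $G=W\sqcup B$, the elements outside $I\cup\{s\}$ are those of $(B\setminus I)\cup(W\setminus\{s\})$. No element of $B\setminus I$ can be added, since $I$ is a maximal independent set of $\Gamma$: each $b\in B\setminus I$ has its matched partner $b+s$ in $I$, and $\{b,b+s,s\}$ is a Schur triple. For $w\in W\setminus\{s\}$ a routine case analysis of the possible Schur triples in $I\cup\{s,w\}$ (using $2x=0$, so that sums of two elements of $B$ land in $W$, while $B$ and $W$ are disjoint, and $s\in W$) shows that the \emph{only} obstruction to adding $w$ is a relation $w=i_1+i_2$ with $i_1,i_2\in I$; note $w=0$ is always blocked by $0+0=0$. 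Therefore $I\cup\{s\}$ is maximal if and only if every $w\in W\setminus\{0,s\}$ lies in the sumset $I+I$.

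It then remains to count, for each fixed $w\in W\setminus\{0,s\}$, the bad sets $I$ with $w\notin I+I$, and to union bound over the $|W\setminus\{0,s\}|=n/2-2$ choices of $w$. Here I would use that $s\ne w$ are distinct nonzero vectors of $\Z_2^k$, hence linearly independent, so that $\langle s,w\rangle\cong\Z_2^2$ acts freely on $B$ and partitions it into $n/8$ orbits $\{b,b+s,b+w,b+s+w\}$ of size $4$. Both the matching pairing $b\leftrightarrow b+s$ and the pairing $b\leftrightarrow b+w$ (whose completion inside $I$ witnesses $w\in I+I$) stay within a single orbit, so the condition $w\notin I+I$ decomposes orbit by orbit. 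Within one orbit a maximal independent set of $\Gamma$ has exactly $4$ admissible restrictions, of which exactly $2$ avoid placing a full $\{b,b+w\}$-pair in $I$; consequently there are exactly $2^{n/8}$ sets $I$ with $w\notin I+I$. Summing over $w$ gives at most $(n/2-2)\cdot2^{n/8}$ bad sets, so at least $2^{n/4}-(n/2-2)\cdot2^{n/8}$ good ones, each generating a distinct maximal sum-free set, as required.

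The main obstacle is obtaining the \emph{exact} count $2^{n/8}$ of bad sets per $w$; the pleasant coincidence that this pairs with $|W\setminus\{0,s\}|=n/2-2$ to give the stated bound is what makes the estimate tight, and the orbit decomposition under the $\Z_2^2$-action is the device that delivers it. The maximality case analysis is elementary but must be carried out carefully to be certain that $w=i_1+i_2$ is the unique obstruction to extending $I\cup\{s\}$.
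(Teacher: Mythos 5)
Your argument is correct and follows essentially the same route as the paper: count the $2^{n/4}$ maximal independent sets of the perfect matching $L_{\{s\}}[B]$, observe that a bad $I$ is exactly one to which some $w\in W\setminus\{0,s\}$ can be added, bound the number of such $I$ by $2^{n/8}$ for each fixed $w$, and union bound over the $n/2-2$ choices. The only cosmetic difference is that you re-derive the $2^{n/8}$ count via the orbit decomposition of $B$ under $\langle s,w\rangle$, whereas the paper simply invokes Claim~\ref{2S} (whose proof is the same $C_4$-decomposition).
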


\begin{proof}
We fix $B=x+W$ with $\dim(W)=k-1$ and $x\notin W$, and $s\in W\backslash\{0\}$. We saw that $L_{\{s\}}[B]$ has $2^{n/4}$ maximal independent sets. Let $I\subseteq B$ be a maximal independent set in $L_{\{s\}}[B]$ and suppose that $\{s\}\cup I$ is not a maximal sum-free set in $G$; call such an $I$ \emph{bad}. Then there exists $s'\in W\backslash\{0,s\}$ such that $\{s,s'\}\cup I$ is sum-free ($s'$ cannot belong to $B$ otherwise $I$ would not be a maximal independent set). So $I$ is a maximal independent set in $L_{\{s,s'\}}[B]$.
By Claim~\ref{2S}, there are $2^{n/8}$ maximal independent sets in $L_{\{s,s'\}}[B]$. Since there are $n/2-2$ possibilities for $s'$, in total there are at most $(n/2-2)\cdot2^{n/8}$ bad $I$.
The claim immediately follows.

%Moreover, for $s$ fixed, a type 1 or 2 maximal sum-free set can house only one set of the form $\{s\}\cup I$. Indeed, suppose that $I_1$ and $I_2$ are two distinct maximal independent sets in $L_{\{s\}}[B]$ such that $\{s\}\cup I_1$ and $\{s\}\cup I_2$ lie in the same type 1 or 2 maximal sum-free set $A$. Let $S\subseteq W$ such that $\{s\}\subseteq S$ and $A$ is generated by the pair $(B,S)$. Then $A=S\cup I$ where $I$ is a maximal independent set in $L_S[B]$. Then $I_1\cup I_2\subseteq I$ but $L_{\{s\}}[B]$ is obtained from $L_S[B]$ by removing edges, so $I$ is an independent set in $L_{\{s\}}[B]$ and so is $I_1\cup I_2$, which is a contradiction to the maximality of $I_1$ and $I_2$. 

%By Claim \ref{type 1 and 2 msfs} there are at most $C\cdot2^{(1/4-\alpha/2)n}$ type 1 or 2 maximal sum-free sets. So there are at most $C\cdot2^{(1/4-\alpha/2)n}$ maximal independent sets in $L_{\{s\}}[B]$ that are not maximal sum-free sets in $G$. Thus, the pair $(B,\{s\})$ generates at least $2^{n/4}-C\cdot2^{(1/4-\alpha/2)n}$ maximal sum-free sets.
\end{proof}

Now we have to be careful because we may count the same maximal sum-free set in two different pairs $(B,\{s\})$. 
\begin{claim}\label{overcounting Z2}
Given distinct pairs $(B,\{s\})$, $(B',\{s'\})$ there are at most $n/4$ maximal sum-free subsets of $G$ that are generated by both pairs.
\end{claim}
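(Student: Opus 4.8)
The plan is to bound the overcounting by analysing what it means for a single maximal sum-free set $A$ to be generated by two distinct pairs $(B,\{s\})$ and $(B',\{s'\})$. If $A$ is generated by $(B,\{s\})$ then $A = I \cup \{s\}$ where $I$ is a maximal independent set in $L_{\{s\}}[B]$, so $I \subseteq B$ and $s \in W$ (where $B = x+W$). Crucially, since $A$ is sum-free of the form $I \cup \{s\}$ with $|I|$ close to $n/4$ and $I$ lying in a coset, the set $A$ essentially determines how it decomposes: the large part $I$ sits in the coset $B$, and $s$ is the single extra element sitting in the subspace $W$. So the first step is to observe that the coset $B$ and the element $s$ are (almost) recoverable from $A$ itself, which already severely constrains how two different generating pairs can produce the same $A$.

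First I would pin down the structural consequence of $A = I \cup \{s\} = I' \cup \{s'\}$ with $(B,\{s\}) \neq (B',\{s'\})$. Since $I$ is a maximal independent set in a perfect matching (the link graph $L_{\{s\}}[B]$ is $1$-regular by Claim~\ref{regularity of the link graph}), $I$ picks exactly one endpoint from each edge $\{b, b+s\}$, so $|I| = n/4$ and $I$ omits exactly one of each such pair. The two decompositions force relationships between $B, s$ and $B', s'$: because the bulk of $A$ lies in a coset, and a maximal sum-free set of this size is contained in essentially one coset (by Theorem~\ref{structure Z2}, as $|A| \approx n/4 + 1$ is comparable once we note $A$ itself is sum-free), I would argue that either $B = B'$ or the two cosets interact in a very rigid way. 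In the case $B = B'$ but $s \neq s'$, both $s$ and $s'$ lie in $W$ and $A = I \cup \{s\} = I' \cup \{s'\}$; then $s' \in I$ and $s \in I'$, and matching up the two perfect-matching structures pins down $A$ up to a small number of choices governed by the common refinement of the matchings $L_{\{s\}}[B]$ and $L_{\{s'\}}[B]$.

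The key computation will be to count, for a fixed common structure, how many $A$ can arise. The heart of the matter is that the union $L_{\{s\}}[B] \cup L_{\{s'\}}[B]$ of two perfect matchings on the $n/2$ vertices of $B$ is a disjoint union of $n/8$ four-cycles $C_4$ of the form $\{b, b+s, b+s+s', b+s'\}$ (exactly as in Claim~\ref{2S}). For $A$ to be generated by both pairs, $I$ must simultaneously be a maximal independent set in one matching and, after swapping $s$ for $s'$, be consistent with the other. On each $C_4$ this forces the intersection $A \cap (\text{that } C_4\text{-worth of } B)$ to take one of only a constant number of configurations; propagating a counting argument across the $n/8$ cycles, together with the constraint that $s'$ itself must be the single omitted vertex, collapses the number of admissible $A$ to at most $2^{n/8}\cdot(\text{small})$, and in fact down to the claimed $n/4$. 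I would handle the genuinely different-coset case $B \neq B'$ separately, showing it contributes even fewer (typically zero, since the two dominant parts $I, I'$ would have to lie in different cosets yet equal each other up to one element, which is impossible for large $n$).

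The main obstacle I expect is the bookkeeping in the $B = B'$, $s \neq s'$ case: making the reduction to the $C_4$-decomposition precise and then correctly counting the admissible configurations per cycle without over- or under-counting. One must carefully track that $s' \in I$ and $s \in I'$ (so the omitted/included status of the special vertices is forced), and verify that these forced choices leave exactly the right number of free binary choices to yield the bound $n/4$ rather than a larger power of two. Getting the constant $n/4$ exactly right, as opposed to an $O(n)$ or $2^{cn}$ bound, is the delicate part and will require identifying precisely which degrees of freedom survive once both generating descriptions of $A$ are imposed.
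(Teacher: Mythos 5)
Your proposal misallocates the difficulty and, as written, would not deliver the claimed bound. The case you treat as ``the heart of the matter'' ($B=B'$, $s\neq s'$) is in fact trivial: a set generated by $(B,\{s\})$ has the form $I\cup\{s\}$ with $I\subseteq B$ and $s\in W$, so $s\notin B=x+W$; hence the generated set meets $G\setminus B$ in exactly $\{s\}$. Two pairs with the same coset but different singletons therefore generate disjoint families of maximal sum-free sets --- there is nothing to count, and the whole $C_4$/double-matching analysis you sketch is unnecessary. (It is also unsubstantiated: you assert the count ``collapses... down to the claimed $n/4$'' without an argument, and the invocation of Theorem~\ref{structure Z2} for $A$ itself does not apply, since $|A|\approx n/4+1\leq 5n/16$ fails the hypothesis $|A|>5\cdot 2^{k-4}$.)

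Conversely, the case you dismiss as contributing ``typically zero'' ($B\neq B'$) is precisely where the bound $n/4$ comes from, and your reason for dismissing it is wrong. If $B\neq B'$ are cosets of distinct hyperplanes, then $B\cap B'$ is either empty or a coset of a codimension-two subspace, so $|B\cap B'|\leq n/4$. A set $A$ generated by both pairs satisfies $A\setminus B=\{s\}$ and $A\setminus B'=\{s'\}$; when $s\neq s'$ this forces $s\in B'$, $s'\in B$, and $A=\{s,s'\}\cup J$ with $J\subseteq B\cap B'$ and $|J|=n/4-1$, giving at most $\binom{n/4}{n/4-1}=n/4$ such sets (and when $s=s'$ one gets $A\setminus\{s\}=B\cap B'$, so at most one set). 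This is not ``impossible for large $n$'': the two large parts $I=A\setminus\{s\}$ and $I'=A\setminus\{s'\}$ need not be equal --- they differ in one element each --- and both can sit inside $B\cap B'$ up to that one element. The paper's proof is exactly this short counting argument; your proposal is missing it and replaces it with an incorrect claim.
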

\begin{proof}
Let $(B,\{s\})=(x+W,\{s\})$ and $(B',\{s'\})=(x'+W',\{s'\})$ be two distinct such pairs.

\begin{itemize}
    \item If $B=B'$ then necessarily $s\ne s'$ and so maximal sum-free sets generated by these pairs differ by at least one element.
    \item If $B\ne B'$ then $|B\cap B'|\leq n/4$. Indeed, as $B$ and $B'$ are cosets, $B \cap B'$ is either empty or is a coset of a subspace of co-dimension two.
    
    Note the maximal sum-free sets generated by $(B,\{s\})$ (resp. $(B',\{s'\})$)  consist of $n/4$ elements of $B$ (resp. $B'$) plus $s$ (resp. $s'$).
    \begin{itemize}[label=-,itemsep=0pt]
        \item If $s=s'$ then a maximal sum-free set generated by both pairs contains necessarily the whole intersection $B\cap B'$ and $s=s'$, so there is only one such set.
        \item If $s\ne s'$ we can assume $s\in B'$ and $s'\in B$ otherwise a maximal sum-free set cannot be generated by both pairs for the same reason as the first point. To build a maximal sum-free set generated by both pairs, we need to choose obviously $s$ and $s'$, and $n/4-1$ elements in $B\cap B'$. There are at most $n/4$ ways to do so, hence there are at most $n/4$ maximal sum-free sets generated by both pairs.
    \end{itemize}
\end{itemize}
Altogether this shows that the number of maximal sum-free sets that are generated by both $(B,\{s\})$ and $(B',\{s'\})$ is at most $n/4$. 
\end{proof}

As there are $\binom{(n-2)(n-1)/2}{2}$ couples of such pairs, we over-count only a number of maximal sum-free sets which is polynomial in $n$, and so $o(1)\cdot 2^{n/4}$. Altogether we have that
\begin{align*}
    f_{\max}(G)&\geq \frac{(n-2)(n-1)}{2}(2^{n/4}-(n/2-2)\cdot2^{n/8})-o(1)\cdot2^{n/4}\\
    &=\left(\binom{n-1}{2}-o(1)\right)2^{n/4}.
\end{align*}

%%%%%%
\section{Proof of Theorem~\ref{thm2}}\label{sec:thm2}

\subsection{Upper bound}
We will use the same method as in Section \ref{sec:mainthm}, though instead of Theorem~\ref{structure Z2} we will apply
Theorem \ref{structure Z3}. 

We apply Lemma \ref{containers} to $G=\Z_3^k$. Consider any container $F$; by Green's removal lemma \cite[Theorem 1.5]{G-R} we have that $F=B\cup C$ with $B$ sum-free and $|C|=o(n)$. 
As in Section~\ref{sec:mainthm},
every maximal sum-free subset of $G$ in $F$ can be built in the following way:
\begin{enumerate}[label=(\arabic*), itemsep=0pt]
    \item Choose a (perhaps empty) sum-free set $S$ in $C$;
    \item Extend $S$ in $B$ to a maximal one.
\end{enumerate}
We then define the following types of maximal sum-free sets.
\begin{enumerate}%[label=\textbf{Type \arabic*:},itemsep=0pt,leftmargin=*]
\item[\textbf{Type 0:}] those obtained from a container $F$ where $S$ is chosen to be empty.
    \item[\textbf{Type 1:}] those obtained from a container $F$ where $|B|\leq 5\cdot 3^{k-3}$ and $|S|\geq 1$.
    \item[\textbf{Type 2:}] those obtained from a container $F$ where $|B|> 5\cdot 3^{k-3}$ and  $|S|\geq 2$.
    \item[\textbf{Type 3:}] those obtained from a container $F$ where $|B|> 5\cdot 3^{k-3}$ and an $S$ which is a singleton.
\end{enumerate}

We define $\fm^i(G)$ and $\fm^i(F)$, $i=0,1,2,3$, analogously to before. As before, $f_{\max}^0(G)\leq 2^{n(\log n)^{-1/18}}$.

Fix any non-empty sum-free $S\subseteq C$ and let $\Gamma:=L_S[B]$ be the link graph of $S$ on $B$.

\begin{claim}\label{small B in Z3}
If $|B|\leq 5\cdot 3^{k-3}=5n/27$ then
$\mis(\Gamma)\leq 3^{5n/81}<3^{0.07n}.$
\end{claim}

\begin{proof}
This follows immediately from (\ref{Moon-Moser}).
\end{proof}

We will now consider type 2 and type 3 maximal sum-free sets. Thus, we suppose $|B|>5\cdot 3^{k-3}$, so by Theorem \ref{structure Z3}, $B\subseteq g+H$ with $H$ a subspace of $G$ over $\Z_3$ and $g\notin H$. Analogously to before we may assume that $B=g+H$ with $\dim(H)=k-1$. So $|B|=|G|/3=n/3$. Moreover, as $B$ and $S$ can be taken disjoint, we have $S\subseteq H\cup(2g+H)$.

The following claim describes the regularity of the link graph.

\begin{claim}\label{regularity of the link graph Z3}
For all $x\in B$,
\begin{itemize}
    \item $d_1(x,\Gamma)=|(S\cup(-S))\cap H|$;
    \item $|(2g+H)\cap S|\leq d_2(x,\Gamma)\leq |(2g+H)\cap S|+1$.
\end{itemize}
Furthermore,
\begin{equation}\label{degrees link graph}
|S|\leq |(S\cup(-S))\cap H|+|(2g+H)\cap S|\leq \delta(\Gamma)\leq \Delta(\Gamma)\leq 2|S|+1\leq 3\delta(\Gamma).
\end{equation}
\end{claim}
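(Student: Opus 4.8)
The plan is to compute the two partial degrees $d_1(x,\Gamma)$ and $d_2(x,\Gamma)$ exactly at an arbitrary vertex $x\in B=g+H$, and then to extract the whole inequality chain from these formulas using only the elementary relations between $|S|$, $|S\cap H|$, $|(S\cup(-S))\cap H|$ and $|(2g+H)\cap S|$. The key structural fact I would use repeatedly is that, as $g+H,\,2g+H,\,H$ are the three cosets of $H$ and $-x\in 2g+H$ whenever $x\in g+H$, a vertex of the form $x-a$ lies in $B$ if and only if $a\in H$, while a vertex of the form $s-x$ lies in $B$ if and only if $s\in 2g+H$.

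First I would count type 1 edges. Such an edge at $x$ goes to $x-a$ for some $a\in S\cup(-S)$, and $x-a\in B$ precisely when $a\in(S\cup(-S))\cap H$; the map $a\mapsto x-a$ is injective and never equal to $x$ (as $0\notin S\cup(-S)$), so there are no type 1 loops and $d_1(x,\Gamma)=|(S\cup(-S))\cap H|$ for every $x$. This is the first bullet. For type 2 edges, an edge at $x$ arises from $s\in S$ via $x+(s-x)=s$, and $s-x\in B$ precisely when $s\in(2g+H)\cap S$; here $s\mapsto s-x$ is again injective, and the only vertex that can coincide with $x$ is $s=2x$. Hence if $2x\notin S$ all $|(2g+H)\cap S|$ neighbours are genuine and $d_2(x,\Gamma)=|(2g+H)\cap S|$, whereas if $2x\in S$ the value $s=2x$ yields a type 2 loop contributing $2$ to the degree, giving $d_2(x,\Gamma)=|(2g+H)\cap S|+1$; this is the second bullet. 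I expect the bookkeeping around this loop to be the main obstacle: in $\Z_3^k$ the identity $2x=s$ also forces $s+s=x$, so one must be careful to record this loop in $\Gamma_2$ rather than treat it as a bad loop, and to check that edges carrying both types are accounted for consistently so that the relevant degree at $x$ is $d_1(x,\Gamma)+d_2(x,\Gamma)$.

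For the chain itself, write $D_1:=|(S\cup(-S))\cap H|$ and observe that $(S\cup(-S))\cap H=(S\cap H)\cup(-(S\cap H))$, whence $|S\cap H|\le D_1\le 2|S\cap H|$, while $S\cap H$ and $(2g+H)\cap S$ partition $S$ so that $|S|=|S\cap H|+|(2g+H)\cap S|$. From the two bullets the degree at $x$ equals $D_1+d_2(x,\Gamma)$, so $\delta(\Gamma)\ge D_1+|(2g+H)\cap S|$ and $\Delta(\Gamma)\le D_1+|(2g+H)\cap S|+1$. The first two inequalities of the chain are then $|S|=|S\cap H|+|(2g+H)\cap S|\le D_1+|(2g+H)\cap S|\le\delta(\Gamma)$; the next one is trivial; the right one is $\Delta(\Gamma)\le D_1+|(2g+H)\cap S|+1\le 2|S\cap H|+|(2g+H)\cap S|+1\le 2|S|+1$; and the final one follows from $2|S|+1\le 3|S|\le 3\big(D_1+|(2g+H)\cap S|\big)\le 3\delta(\Gamma)$, using $|S|\ge1$. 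All the content is in the degree count; the chain is then just these size comparisons.
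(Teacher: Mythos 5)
Your computation of the two partial degrees is exactly the paper's: type~1 neighbours are $x+a$ for $a\in(S\cup(-S))\cap H$ and type~2 neighbours are $s-x$ for $s\in(2g+H)\cap S$, with the loop at $x$ when $2x\in S$ counted as a type~2 edge contributing two to $d_2$. The genuine gap is in the step ``the degree at $x$ equals $D_1+d_2(x,\Gamma)$, so $\Delta(\Gamma)\le D_1+|(2g+H)\cap S|+1$.'' You have accounted for the type~2 loop arising from $2x=s$, but not for \emph{bad} loops, i.e.\ loops at $x$ coming from a Schur triple $\{x,s,s'\}$ with $s,s'\in S$ (for instance $x=s+s'$). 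Such a loop contributes $2$ to $d(x,\Gamma)$ but lies in neither $\Gamma_1$ nor $\Gamma_2$, so $d(x,\Gamma)=d_1(x,\Gamma)+d_2(x,\Gamma)$ fails at such a vertex. Concretely, take $S=\{s_1,s_2\}\subseteq 2g+H$ and $x=s_1+s_2\in B$: then $D_1=0$ and $d_2(x,\Gamma)=2$ (the neighbours are $-s_1$ and $-s_2$), but the bad loop at $x$ gives $d(x,\Gamma)=4$, exceeding your claimed bound $D_1+|(2g+H)\cap S|+1=3$. Your route to $\Delta(\Gamma)\le 2|S|+1$ therefore breaks: correcting the $+1$ to the $+2$ that a bad loop can force yields only $2|S|+2$. (The lower bound $\delta(\Gamma)\ge D_1+|(2g+H)\cap S|$ is unharmed, since extra loop contributions only increase the degree.)

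The paper closes exactly this gap with one additional observation. A bad loop at a vertex of $B=g+H$ forces $(2g+H)\cap S\ne\emptyset$ (if $s,s'\in S\cap H$ then neither $s+s'=x$ nor $x+s'=s$ can hold with $x\in g+H$), and every $s\in(2g+H)\cap S$ contributes nothing to $(S\cup(-S))\cap H$ because neither $s$ nor $-s$ lies in $H$. Hence, whenever a bad loop can occur, $|(S\cup(-S))\cap H|+|(2g+H)\cap S|\le 2(|S|-|(2g+H)\cap S|)+|(2g+H)\cap S|\le 2|S|-1$, and adding the $+2$ from the bad loop still gives $d(x,\Gamma)\le 2|S|+1$; at vertices where $d_2(x,\Gamma)=|(2g+H)\cap S|+1$ the unique loop is already counted in $d_2$ and one gets $\le 2|S|$. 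You need this case analysis (or an equivalent one) to justify the upper end of the chain; the rest of your argument --- the two bullets, the partition $|S|=|S\cap H|+|(2g+H)\cap S|$, the bound $D_1\le 2|S\cap H|$, and the final inequality $2|S|+1\le 3|S|\le 3\delta(\Gamma)$ --- matches the paper.
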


\begin{proof}
For the first point, each element $s\in (S\cup(-S))\cap H$ gives rise to exactly one type 1 neighbour of $x$ in $\Gamma$, namely $x+s$. Conversely, if there are $x,y\in B$ and $s\in S$ such that $s=x-y$ then necessarily $s\in H$. For the second point, note that each element $s\in (2g+H)\cap S$ creates one loop at $x\in g+H$ such that $2x=s$ (this $x$ is unique). So for a given $x\in B$, either (i) $2x\notin (2g+H)\cap S$ and so each $s\in (2g+H)\cap S$ gives one neighbour to $x$ which is different from $x$
%, namely $s-x$, then $d_2(x,\Gamma)=|(2g+H)\cap S|$, 
or (ii) $2x\in (2g+H)\cap S$ and we have the same situation but one $s$ creates a loop at $x$ which contributes two to the degree of $x$.
%so $d_2(x,\Gamma)=|(2g+H)\cap S|+1$. 
Conversely, if there are $x,y\in B$ and $s\in S$ such that $x+y=s$ then necessarily $s\in 2g+H$. Thus, the second point holds.

Our argument above therefore implies that $|(S\cup(-S))\cap H|+|(2g+H)\cap S|\leq \delta(\Gamma)$.
Note that if $(2g+H)\cap S=\emptyset$ then there cannot be any bad loops in $\Gamma$. Therefore, in this case we have that $\Delta(\Gamma)=|(S\cup(-S))\cap H|\leq 2|S| $. 

So assume that $(2g+H)\cap S\ne \emptyset$. Given any $s \in (2g+H)\cap S$ notice that $-s \not \in -S\cap H$. This implies that
$|(S\cup(-S))\cap H|+|(2g+H)\cap S| \leq 2(|S|-|(2g+H)\cap S| )+|(2g+H)\cap S| =2|S|-|(2g+H)\cap S| \leq 2|S|-1$.
Given any $x\in B$, if $d_2(x,\Gamma)= |(2g+H)\cap S|$ then we may have a bad loop at $x$ in $\Gamma$ and so
$$d(x,\Gamma)\leq |(S\cup(-S))\cap H|+|(2g+H)\cap S|+2\leq 2|S|+1.$$
Otherwise, if  $d_2(x,\Gamma)= |(2g+H)\cap S|+1$ then the loop at $x$ in $\Gamma$ is already counted here and so 
$$d(x,\Gamma)\leq |(S\cup(-S))\cap H|+|(2g+H)\cap S|+1\leq 2|S|.$$
In all cases we have shown that $\Delta(\Gamma)\leq 2|S|+1$.

The first inequality holds in (\ref{degrees link graph}) since $$|S|=|S\cap H|+|S\cap(2g+H)|\leq |(S\cup(-S))\cap H|+|(2g+H)\cap S|.$$ Altogether this shows that all inequalities in (\ref{degrees link graph}) hold.
%Moreover, we cannot have simultaneously $|(S\cup(-S))\cap H|=2|S|$ and $|(2g+H)\cap S|=|S|$, because the last equality means $S\subseteq 2g+H$ and then $(S\cup(-S))\cap H=\varnothing$. Thus, $$\Delta(\Gamma)\leq |(S\cup(-S))\cap H|+|(2g+H)\cap S|+1\leq 3|S|$$
%and (\ref{degrees link graph}) holds.
\end{proof}

\begin{claim}\label{large S in Z3}
If $|S|>10^4$ then $\mis(\Gamma)\leq \left(\frac{n}{300}+1\right)\cdot3^{0.103n}$.
\end{claim}

\begin{proof}
By (\ref{degrees link graph}) we can apply Lemma \ref{dense and regular graphs} with $k=3$ and $b\geq 100$:
$$\mis(\Gamma)\leq\sum_{0\leq i\leq n/300}\binom{n/3}{i}\cdot 3^{\frac{n}{12}+\frac{2n}{900}}\leq \left(\frac{n}{300}+1\right)\cdot(100e)^{n/300}\cdot3^{77n/900}<\left(\frac{n}{300}+1\right)\cdot 3^{0.103n}.$$
\end{proof}

\begin{claim}\label{medium S in Z3}
If $3\leq |S|\leq 10^4$ then $\mis(\Gamma)\leq 3^{3\cdot10^4/13}\cdot 3^{(1/9-1/702)n}$.
\end{claim}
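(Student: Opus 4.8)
The plan is to mirror the medium-$|S|$ argument used for $\Z_2^k$ (the proof of Claim~\ref{medium S in Z3}'s analogue), feeding the degree information from Claim~\ref{regularity of the link graph Z3} into the stability version of the Moon--Moser bound, Lemma~\ref{stability}. The one genuinely new feature compared with the binary case is that here $\Gamma=L_S[B]$ is no longer exactly $|S|$-regular; instead (\ref{degrees link graph}) gives the two-sided estimate $|S|\le\delta(\Gamma)\le\Delta(\Gamma)\le 2|S|+1$. I would therefore work with one-sided bounds throughout: the lower bound $\delta(\Gamma)\ge|S|$ to bound the edge count from below, and the upper bound $\Delta(\Gamma)\le 2|S|+1\le 3|S|$ in the exponent of Lemma~\ref{stability}.

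Concretely, I would first record that $\Gamma$ has $N:=|B|=n/3$ vertices. Since a loop contributes two to a vertex degree and one to $e(\Gamma)$, the handshake identity $\sum_{x\in B}d(x,\Gamma)=2e(\Gamma)$ together with $\delta(\Gamma)\ge|S|$ yields $e(\Gamma)\ge N|S|/2=n|S|/6$. Writing $e(\Gamma)=N+k$, this gives $k\ge n(|S|-2)/6\ge 0$ for $|S|\ge 3$. I would then apply Lemma~\ref{stability} with $C:=3^{3\cdot 10^4/13}$ and $\Delta:=\Delta(\Gamma)$; the hypothesis $C\ge 3^{\Delta/13}$ holds across the whole range because $\Delta\le 2|S|+1\le 2\cdot10^4+1<3\cdot10^4$. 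This produces
$$\mis(\Gamma)\le C\cdot 3^{\frac{N}{3}-\frac{k}{13\Delta}}\le 3^{3\cdot10^4/13}\cdot 3^{\frac{n}{9}-\frac{n(|S|-2)}{234|S|}},$$
using $N/3=n/9$, $k\ge n(|S|-2)/6$ and $\Delta\le 3|S|$. Finally, since $(|S|-2)/|S|=1-2/|S|\ge 1/3$ for $|S|\ge 3$, the subtracted term is at least $n/702$, which gives exactly the claimed bound $3^{3\cdot10^4/13}\cdot 3^{(1/9-1/702)n}$.

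The main obstacle, such as it is, lies in handling the loss of exact regularity cleanly: because $\Gamma$ is only almost-regular and may carry loops, I must take care that each inequality is used in the correct direction — lower-bounding $\delta$ (hence $e(\Gamma)$ and $k$) while upper-bounding $\Delta$ — so that the resulting estimate on $3^{N/3-k/(13\Delta)}$ is a valid upper bound. The only other point requiring attention is to confirm that the single choice of constant $C=3^{3\cdot10^4/13}$ satisfies the hypothesis of Lemma~\ref{stability} uniformly over all $3\le|S|\le 10^4$, and to check that the worst case of the monotone quantity $(|S|-2)/|S|$ occurs at $|S|=3$, which is what pins down the constant $1/702$.
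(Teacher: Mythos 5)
Your proposal is correct and matches the paper's proof essentially step for step: the paper likewise feeds $e(\Gamma)\ge |B||S|/2$ (from $\delta(\Gamma)\ge|S|$) and $\Delta(\Gamma)\le 2|S|+1\le 3|S|\le 3\cdot 10^4$ into Lemma~\ref{stability} with $C=3^{3\cdot 10^4/13}$ and $k=\frac{|B|(|S|-2)}{2}$, then minimises $(|S|-2)/|S|$ at $|S|=3$ to obtain the $1/702$. Your handling of the inequality directions (lower-bounding $k$, upper-bounding $\Delta$) is exactly the intended argument.
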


\begin{proof}
By Claim \ref{regularity of the link graph Z3}, $\Delta(\Gamma)\leq 3|S|\leq 3\cdot 10^4$ and $$
e(\Gamma)\geq\frac{|B|\delta(\Gamma)}{2}
\stackrel{(\ref{degrees link graph})}{
\geq} \frac{|B||S|}{2}.
$$
Then, by Lemma \ref{stability}, with $C=3^{3\cdot10^4/13}$ and $k=\frac{|B|(|S|-2)}{2}$,
\begin{align*}
    \mis(\Gamma)&\leq 3^{3\cdot10^4/13}\cdot 3^{\frac{|B|}{3}\left(1-\frac{(|S|-2)}{26\cdot|S|}\right)}
    \leq 3^{3\cdot10^4/13}\cdot 3^{\frac{n}{9}-\frac{n(|S|-2)}{26\cdot9|S|}}
    \leq 3^{3\cdot10^4/13}\cdot 3^{(1/9-1/702)n}.
\end{align*}
\end{proof}

\begin{claim}\label{2S Z3}
If $|S|=2$, $\mis(\Gamma)\leq 3^{0.1n}$.
\end{claim}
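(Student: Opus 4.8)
The plan is to exploit the fact that, exactly as in Claims~\ref{2S} and~\ref{3S}, the link graph $\Gamma$ decomposes into vertex-disjoint connected components all of a single small isomorphism type, so that $\mis(\Gamma)$ is simply the product over components of their numbers of maximal independent sets. Write $S=\{s_1,s_2\}$. Since $B$ and $S$ are disjoint and $S\subseteq H\cup(2g+H)$, each $s_i$ lies in $H$ or in $2g+H$. Because $S$ is sum-free we have $s_1\neq s_2$, both are nonzero, and $s_1\neq 2s_2$, $s_2\neq 2s_1$ (otherwise $s_i+s_i=s_j$ would be a Schur triple inside $S$); in particular, when $s_1,s_2\in H$ they are linearly independent over $\Z_3$. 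I would work in the coordinates $x=g+h$ with $h\in H$, so that a generator $s\in H$ acts by the translations $h\mapsto h\pm s$, creating a type~1 triangle on the coset $h+\langle s\rangle$, while a generator $s=2g+w\in 2g+H$ acts by the reflection $h\mapsto w-h$, a type~2 edge. I then split into three cases according to $|S\cap H|\in\{0,1,2\}$.

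If $S\subseteq H$, then $\Gamma$ is the Cayley graph of $\langle s_1,s_2\rangle\cong\Z_3^2$ with connection set $\{\pm s_1,\pm s_2\}$, so each component is a copy of $K_3\square K_3$ on a coset of the $9$-element subgroup $\langle s_1,s_2\rangle$; a short check gives $\mis(K_3\square K_3)=6$, whence $\mis(\Gamma)=6^{n/27}$. If $S\subseteq 2g+H$, both generators act as reflections whose composition is translation by $w_2-w_1$ (of order $3$), so a generic orbit has six elements forming a $6$-cycle (two translation-cosets matched by the two reflections), and since $\mis(C_6)=5$ this contributes $5^{n/18}$. If $|S\cap H|=1$, say $s_1\in H$ and $s_2\in 2g+H$, then a generic orbit consists of two $\langle s_1\rangle$-triangles (type~1 edges) joined by the perfect matching arising from the reflection of $s_2$, i.e.\ a triangular prism $K_3\square K_2$; here $\mis(K_3\square K_2)=6$, contributing $6^{n/18}$.

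In the last two cases a bounded number of orbits degenerate: the reflection can stabilise a coset, collapsing a six-element orbit into a three-element one carrying a harmless type~2 loop. Such a component has at most three vertices and hence a constant number of maximal independent sets, so it alters $\mis(\Gamma)$ only by a factor absorbed for large $n$. Comparing the three cases, the dominant contribution is $6^{n/18}=3^{(\log_3 6)\,n/18}<3^{0.1n}$, which yields the claim. I expect the main difficulty to be the bookkeeping of the component structure through the group action: pinning down exactly when an orbit has six rather than three elements (equivalently, when a reflection fixes the relevant coset), confirming that the only loops that arise are the harmless type~2 ones and that there are no bad loops (since any such loop would force $s_j-s_i\in H$, which lies outside $B=g+H$), and verifying the small values $\mis(K_3\square K_3)=\mis(K_3\square K_2)=6$ and $\mis(C_6)=5$.
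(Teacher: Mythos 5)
Your proof follows essentially the same route as the paper: the same three-way case split on $|S\cap H|$, the same identification of the generic components as $K_3\square K_3$ (the paper's ``network $\Z_3^2$''), $C_6$, and $K_2\square K_3$ with $6$, $5$ and $6$ maximal independent sets respectively, and the same conclusion that the dominant contribution $6^{n/18}<3^{0.1n}$. One assertion is wrong, though harmlessly so here: you claim no bad loops arise because such a loop ``would force $s_j-s_i\in H$''. The bad-loop condition is $s+s'=x$ with $s,s'\in S$ not necessarily distinct, so when $s_1,s_2\in 2g+H$ the vertices $s_1+s_2$, $2s_1$ and $2s_2$ all lie in $g+H=B$ and all carry bad loops, and in the mixed case the vertices $2s_2$ and $s_1-s_2$ do; the paper identifies these explicitly. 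This does not invalidate your argument, since any loop (bad or type~2) merely excludes its vertex from every independent set, and your fallback bound of a constant factor for the bounded number of degenerate components is sound given the slack in $3^{0.1n}$ --- but the bad/type-2 distinction would matter if this link graph were later fed into Lemma~\ref{generalisation link graph}, so the misclassification is worth correcting.
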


\begin{proof}
There are three cases depending on which coset the elements of $S$ belong to. Note that since $S$ is sum-free, we cannot have $S=\{s,-s\}=\{s,2s\}$ for an $s\in H\cup (2g+H)$, and the zero element does not belong to $S$.
\begin{enumerate}[label=\textbf{Case \arabic*:},leftmargin=*,itemsep=5pt]
    \item $S=\{s_1,s_2\}$ with $s_1,s_2\in 2g+H$. Let $x_1:=2s_1\in B$ and $x_2:=2s_2\in B$ be the unique elements such that $2x_1=s_1$ and $2x_2=s_2$. Note that $x_2=s_1-s_2+x_1$. The component of $\Gamma$ containing $x_1$ and $x_2$ is the path $\{x_1,s_2-x_1=s_1-x_2=s_1+s_2,x_2\}$ with a bad loop at each of those three vertices (the loops at $x_1$ and $x_2$ are both bad loops and type 2 edges). Note the empty set is the only maximal independent set of this component. The component in $\Gamma$ of any $x\in B\backslash\{x_1,s_1+s_2,x_2\}$ is the cycle $C_6$ as shown in Figure~2. Then $\Gamma$ consists of the component of $x_1$ and $x_2$ and $(|B|-3)/6$ disjoint copies of the cycle $C_6$. Since $\mis(C_6)=5$, $\mis(\Gamma)=5^{(|B|-3)/6}=\frac{5^{n/18}}{\sqrt{5}}<3^{0.1n}$.
    
\begin{center}
\begin{tikzpicture}[scale=2]
\draw (0,0) -- ++(-0.5,0.866) -- ++(-1,0) -- ++(-0.5,-0.866) -- ++(0.5,-0.866) -- ++(1,0) -- cycle;
\node at (0.65,0) {$-s_1-s_2-x$};
\node at (-2.2,0) {$x$};
\node at (-1.9,0.866) {$s_1-x$};
\node at (0.15,0.866) {$-s_1+s_2+x$};
\node at (0.15,-0.866) {$s_1-s_2+x$};
\node at (-1.9,-0.866) {$s_2-x$};
\fill[black] (0,0) circle (1.5pt);
\fill[black] (-0.5,0.866) circle (1.5pt);
\fill[black] (-1.5,0.866) circle (1.5pt);
\fill[black] (-2,0) circle (1.5pt);
\fill[black] (-1.5,-0.866) circle (1.5pt);
\fill[black] (-0.5,-0.866) circle (1.5pt);
\end{tikzpicture}

\textbf{Figure 2 - } The 6-cycle $C_6$.
\end{center}

    \item $S=\{s_1,s_2\}$ with $s_1\in H$ and $s_2\in 2g+H$. Let $y:=2s_2\in B$ be the unique element such that $2y=s_2$. Then the component of $y$ is the triangle $\{y,s_1+y=s_1-s_2,-s_1+y=-s_1-s_2\}$ with a bad loop at $y$ (which is also a type 2 edge) and a bad loop at $s_1-s_2$; so this component has one maximal independent set. The component in $\Gamma$ of any $x\in B\backslash\{y,s_1-s_2,-s_1-s_2\}$ is the graph $K_2\square K_3$ as represented in Figure~3. In particular, if $x$ is not a vertex in the component of $y$, then one can check that all vertices in Figure~3 are distinct.
    
    Then $\Gamma$ consists of the component of $y$ and $(|B|-3)/6$ disjoint copies of $K_2\square K_3$. Since $\mis(K_2\square K_3)=6$, $\mis(\Gamma)=6^{(|B|-3)/6}=\frac{6^{n/18}}{\sqrt{6}}<3^{0.1n}$.
    
\begin{center}
\begin{tikzpicture}[scale=1.5]
\draw (-0.5,0) -- ++(-1.5,0.866) -- ++(0,-1.732) -- cycle;
\draw (0.5,0) -- ++(1.5,0.866) -- ++(0,-1.732) -- cycle;
\draw (-0.5,0) -- (0.5,0);
\draw (-2,0.866) -- (2,0.866);
\draw (-2,-0.866) -- (2,-0.866);
\node at (-0.5,0.2) {$x$};
\node at (0.4,0.2) {$s_2-x$};
\node at (-2.5,0.866) {$s_1+x$};
\node at (-2.6,-0.866) {$-s_1+x$};
\node at (2.85,0.866) {$-s_1+s_2-x$};
\node at (2.75,-0.866) {$s_1+s_2-x$};
\fill[black] (0.5,0) circle (2pt);
\fill[black] (-0.5,0) circle (2pt);
\fill[black] (-2,0.866) circle (2pt);
\fill[black] (-2,-0.866) circle (2pt);
\fill[black] (2,0.866) circle (2pt);
\fill[black] (2,-0.866) circle (2pt);
\end{tikzpicture}

\textbf{Figure 3 - } The graph $K_2\square K_3$.
\end{center}

    \item $S=\{s_1,s_2\}$ with $s_1,s_2\in H$, $s_1\ne s_2$ and $s_1\ne -s_2$. In this case there is no loop in $\Gamma$. The component of any $x\in B$ in $\Gamma$ is represented in Figure~4. Then $\Gamma$ consists of $|B|/9$ disjoint copies of this graph. Since it has 6 maximal independent sets, $\mis(\Gamma)=6^{|B|/9}=6^{n/27}<3^{0.1n}$.
    
\begin{center}
\begin{footnotesize}
\begin{tikzpicture}[scale=2]
\draw (0,0) -- ++(1,0) -- ++(0,1) -- ++(-2,0) -- ++(0,-2) -- ++(2,0) -- ++(0,1);
\draw (0,0) -- (0,1);
\draw (0,0) -- (0,-1);
\draw (0,0) -- (-1,0);
\fill[black] (0,0) circle (1.5pt);
\fill[black] (0,1) circle (1.5pt);
\fill[black] (-1,0) circle (1.5pt);
\fill[black] (1,0) circle (1.5pt);
\fill[black] (0,-1) circle (1.5pt);
\fill[black] (-1,-1) circle (1.5pt);
\fill[black] (1,1) circle (1.5pt);
\fill[black] (-1,1) circle (1.5pt);
\fill[black] (1,-1) circle (1.5pt);
\draw (1,-1) arc (60:120:2cm);
\draw (1,0) arc (60:120:2cm);
\draw (1,1) arc (60:120:2cm);
\draw (-1,-1) arc (-120:-240:1.15cm);
\draw (0,-1) arc (-120:-240:1.15cm);
\draw (1,-1) arc (-120:-240:1.15cm);
\node at (-0.13,0.13) {$x$};
\node at (0,-1.15) {$x-s_2$};
\node at (0,1.15) {$x+s_2$};
\node at (-1.3,0) {$x-s_1$};
\node at (1.33,0) {$x+s_1$};
\node at (-1.4,1.15) {$x-s_1+s_2$};
\node at (1.4,1.15) {$x+s_1+s_2$};
\node at (-1.4,-1.15) {$x-s_1-s_2$};
\node at (1.4,-1.15) {$x+s_1-s_2$};
\end{tikzpicture}
\end{footnotesize}

\textbf{Figure 4 - } The network $\Z_3^2$.
\end{center}

\end{enumerate}
\end{proof}

\begin{claim}\label{type 1 and 2 msfs Z3}
$\fm^0(G)+f_{\max}^1(G)+f_{\max}^2(G)\leq o(1)\cdot3^{n/9}$.
\end{claim}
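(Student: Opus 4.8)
The goal is to show that the contribution of Type~0, Type~1 and Type~2 maximal sum-free sets is negligible, namely $o(1)\cdot 3^{n/9}$, mirroring the argument of Claim~\ref{type 1 and 2 msfs} in the $\Z_2^k$ case. The key point is that each of Claims~\ref{small B in Z3}--\ref{2S Z3} supplies, for a fixed non-empty sum-free $S$, a bound of the form $\mis(\Gamma)\leq C\cdot 3^{(1/9-\alpha)n}$ with constants $C,\alpha>0$ independent of $S$. I would first record that all the relevant exponents are strictly below $1/9$: for $|S|=2$ we have $3^{0.1n}$ (Claim~\ref{2S Z3}), for $3\leq|S|\leq 10^4$ we have $3^{(1/9-1/702)n}$ (Claim~\ref{medium S in Z3}), for $|S|>10^4$ we have $3^{0.103n}$ (Claim~\ref{large S in Z3}), and for Type~1 sets the bound $3^{0.07n}$ (Claim~\ref{small B in Z3}); since $0.1,0.103<1/9\approx 0.1111$, each is of the claimed form, so one may set $\alpha:=\min\{1/702,\,1/9-0.103\}>0$.

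\textbf{Key steps.} First I would handle Type~0 by recalling $\fm^0(G)\leq 2^{n(\log n)^{-1/18}}=3^{o(n)}$, which is trivially $o(1)\cdot 3^{n/9}$. Next, for Type~2 I would multiply the three relevant factors: by Lemma~\ref{containers}~(iii) there are at most $2^{n(\log n)^{-1/18}}=3^{o(n)}$ containers $F$, and for each the number of choices of a sum-free $S\subseteq C$ is at most $2^{|C|}=2^{o(n)}=3^{o(n)}$ since $|C|=o(n)$. Combining these with the per-$S$ bound $\mis(\Gamma)\leq C\cdot 3^{(1/9-\alpha)n}$ from Claims~\ref{large S in Z3}--\ref{2S Z3} and invoking Lemma~\ref{extension link graph}, I would obtain
$$
\fm^2(G)\leq C\cdot 3^{o(n)}\cdot 3^{o(n)}\cdot 3^{(1/9-\alpha)n}\leq C\cdot 3^{(1/9-\alpha/2)n}
$$
for all sufficiently large $n$, which is $o(1)\cdot 3^{n/9}$. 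The Type~1 bound follows by the identical argument, replacing the per-$S$ estimate with Claim~\ref{small B in Z3}'s bound $3^{0.07n}$. Adding the three estimates gives the claim.

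\textbf{Main obstacle.} There is essentially no real obstacle here, since the heavy lifting has already been done in the preceding claims; the only thing to be careful about is the bookkeeping of the $3^{o(n)}$ factors — specifically verifying that the container count, the count of choices for $S$, and the polynomial prefactors (such as $(n/300+1)$ in Claim~\ref{large S in Z3}) are all absorbed into the $3^{o(n)}$ slack and hence dominated by the strict gap $\alpha n$ in the exponent. One should also confirm that the constant $C$ arising from the various claims (the largest being $3^{3\cdot10^4/13}$) is indeed a fixed constant independent of $n$, so that choosing $n$ large enough makes $C\cdot 3^{-\alpha n/2}\to 0$. These are all routine verifications analogous to Claim~\ref{type 1 and 2 msfs}.
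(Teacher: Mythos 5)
Your proposal is correct and takes essentially the same route as the paper: the paper's proof of this claim is literally a one-line remark that it follows analogously to Claim~\ref{type 1 and 2 msfs}, using Claims~\ref{small B in Z3}--\ref{2S Z3} together with Lemma~\ref{containers}, which is exactly the container-count $\times$ choices-of-$S$ $\times$ per-$S$ $\mis$-bound computation you spell out. Your explicit verification that all exponents ($0.07$, $0.1$, $0.103$, $1/9-1/702$) sit strictly below $1/9$ is the only substantive check needed, and you have done it correctly.
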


\begin{proof}
This follows analogously to the proof of Claim \ref{type 1 and 2 msfs} where now we apply Claims~\ref{small B in Z3}, \ref{large S in Z3}, \ref{medium S in Z3} and \ref{2S Z3}
together with Lemma~\ref{containers}.
\end{proof}

\begin{claim}\label{type 3 msfs in Z3}
$f_{\max}^3(G)\leq \left(\frac{(n-3)(n-1)}{3}+o(1)\right)\cdot 3^{n/9}$
\end{claim}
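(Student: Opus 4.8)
The goal is to bound $\fm^3(G)$ for $G=\Z_3^k$, mirroring the argument used for Claim~\ref{type 3 msfs} in the binary case. The essential structure is the same: for type 3 maximal sum-free sets, each relevant container is of the form $F=B\cup C$ with $B=g+H$ where $H$ is a $(k-1)$-dimensional subspace of $G$, $g\notin H$, and $|C|=o(n)$; moreover $S$ is a singleton. By Lemma~\ref{extension link graph}, it suffices to sum $\mis(L_{\{s\}}[g+H])$ over all admissible pairs $(g+H,\{s\})$, where $s\in S$ must be chosen so that $\{s\}$ is sum-free (i.e.\ $s\neq 0$) and, crucially, so that $S$ can be taken disjoint from $B$. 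Since we established that $S\subseteq H\cup(2g+H)$, the singleton $s$ lies in one of these two cosets.

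\textbf{Key steps.} First I would determine $\mis(L_{\{s\}}[g+H])$ precisely, splitting into two cases according to whether $s\in H$ or $s\in 2g+H$. By Claim~\ref{regularity of the link graph Z3} the link graph of a singleton has degrees governed by which coset $s$ occupies. If $s\in H$, then $-s\in H$ as well, so each vertex $x\in B$ has the two type 1 neighbours $x+s$ and $x-s$ and no loops; thus $\Gamma$ is a disjoint union of triangles (one for each coset of $\langle s\rangle$ within $B$), giving $\mis(\Gamma)=3^{|B|/3}=3^{n/9}$. If instead $s\in 2g+H$, then each $x$ picks up a type 2 neighbour from $x+x'=s$, and the unique $x$ with $2x=s$ carries a bad loop; here I expect $\Gamma$ to decompose into triangles plus one degenerate component, again yielding $\mis(\Gamma)=3^{n/9}$ up to a bounded multiplicative factor absorbed into the $o(1)$.

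\textbf{Counting the pairs.} Next I would count the admissible pairs. There are $n-1$ subspaces $H$ of dimension $k-1$ (equivalently $n-1$ cosets $g+H$ of the form $B$), since hyperplanes correspond to nonzero linear functionals up to scaling, and $\Z_3$ has $(3^k-1)/2$… — here one must be careful: in $\Z_3^k$ there are $(3^k-1)/2=(n-1)/2$ hyperplanes, but each hyperplane $H$ has two nontrivial cosets $g+H$ and $2g+H$, so there are indeed $n-1$ choices for $B$. Once $B=g+H$ is fixed, $s$ ranges over $(H\cup(2g+H))\setminus\{0\}$, which has $(n/3-1)+(n/3)=2n/3-1$ elements, but the genuine count of distinct sum-free singletons that can be paired with $B$ (after accounting for which choices actually give a new maximal set versus over-counting) should reduce to roughly $n/3-1$ effective choices so that the product matches $(n-3)(n-1)/3$. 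I would verify $(n-1)\cdot(n/3-1)=(n-1)(n-3)/3$ gives exactly the leading coefficient.

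\textbf{Main obstacle.} The hard part will be the case $s\in 2g+H$: the presence of the bad loop and the type 2 (rather than type 1) edges changes the local structure of the link graph, so I cannot simply reuse the perfect-matching picture from the binary argument. I would need to show that this case still contributes $\mis(\Gamma)\le(1+o(1))3^{n/9}$ and, more delicately, determine whether singletons in $2g+H$ contribute to the leading term at all or whether disjointness of $S$ from $B$ restricts $s$ to $H$. Pinning down exactly which singletons yield the count $(n-3)(n-1)/3$ rather than a spurious factor of $2$ is where I expect the real care to be required, and as in Claim~\ref{type 3 msfs} over-counting of the same maximal set by different pairs need only be controlled crudely here since this is an upper bound.
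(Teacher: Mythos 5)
Your skeleton is the right one (reduce to summing $\mis(L_{\{s\}}[g+H])$ over pairs, split according to $s\in H$ versus $s\in 2g+H$, count $(n-1)$ cosets $B$ and $(n/3-1)$ choices of $s\in H\setminus\{0\}$), and the case $s\in H$ is handled exactly as in the paper: disjoint triangles $\{x-s,x,x+s\}$, giving $\mis(\Gamma)=3^{|B|/3}=3^{n/9}$. But there is a genuine gap in the case $s\in 2g+H$, and your guess there is wrong. If $s\in 2g+H$, the only edges are type~2 edges $xy$ with $x+y=s$, so each vertex $x$ has the \emph{single} neighbour $s-x$ (you even say this yourself), and the graph is a perfect matching on the pairs $\{x,s-x\}$ together with one looped vertex $y$ with $2y=s$ --- it does not ``decompose into triangles,'' and $\mis(\Gamma)$ is not $(1+o(1))3^{n/9}$. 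It is $2^{(|B|-1)/2}=2^{n/6}/\sqrt{2}$, which is \emph{exponentially smaller} than $3^{n/9}$ because $2^{1/6}<3^{1/9}$. This is exactly the fact your plan leaves unresolved: had your guess $\mis(\Gamma)\approx 3^{n/9}$ been correct, the $s\in 2g+H$ pairs (there are $(n-1)\cdot n/3$ of them) would double the leading coefficient and the claim would fail. The paper's proof closes precisely this point: the total contribution of all $s\in 2g+H$ pairs is at most $\frac{n(n-1)}{6\sqrt 2}\cdot 2^{n/6}=o(1)\cdot 3^{n/9}$, so only the $s\in H$ pairs, of which there are $(n-1)(n/3-1)=(n-1)(n-3)/3$, contribute to the main term. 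There is no hidden reduction to ``roughly $n/3-1$ effective choices'' among all $2n/3-1$ singletons; the reduction is that two thirds of the singletons live in $2g+H$ and generate exponentially fewer maximal independent sets. Over-counting is indeed irrelevant here since this is an upper bound, as you note.
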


\begin{proof}
Similarly to the proof of Claim \ref{type 3 msfs}, it suffices to sum up the number of maximal independent sets in link graphs of the form $L_{\{s\}}[g+H]$ where $\dim(H)=k-1$, $g\notin H$ and $\{s\}\subseteq H\cup (2g+H)$ is a sum-free singleton (so any singleton except $\{0\}$).

Let $\{s\}\subseteq H\cup (2g+H)$ be such a singleton and $B=g+H$. We will distinguish two cases depending on which set $s$ belongs to.
\begin{itemize}
    \item If $s\in 2g+H$ then there is one loop in $\Gamma$ at $y\in B$ such that $2y=s$ and $\Gamma$ consists of a perfect matching between the $(|B|-1)/2$ pairs $\{x,s-x\}$ with $x\ne y$. Then $\mis(\Gamma)=2^{(|B|-1)/2}=\frac{2^{n/6}}{\sqrt{2}}$.
    \item If $s\in H$ then $\Gamma$ consists of $|B|/3$ disjoint triangles of the form $\{x-s,x,x+s\}$; thus $\mis(\Gamma)=3^{|B|/3}=3^{n/9}$.
\end{itemize}

Now we want to count precisely how many pairs $(g+H,\{s\})$ we have. Since there are $(n-1)/2$ $(k-1)$-dimensional subspaces of $G$, we have $(n-1)/2$ choices for $H$ and two choices for the coset. Once $g+H$ is fixed, we have $|H\backslash\{0\}|=n/3-1$ choices for $s\in H$ and $|2g+H|=n/3$ choices for $s\in 2g+H$. As we do not mind over-counting for now, we conclude that
$$f_{\max}^3(G)\leq (n-1)(n/3-1)\cdot 3^{n/9} + \frac{n(n-1)}{6\sqrt{2}}\cdot2^{n/6}=\left(\frac{(n-3)(n-1)}{3}+o(1)\right) 3^{n/9}.$$
\end{proof}

Combining Claims \ref{type 1 and 2 msfs Z3} and \ref{type 3 msfs in Z3} gives us the upper bound 
$$f_{\max}(G)\leq \fm^0(G)+ f_{\max}^1(G)+f_{\max}^2(G)+f_{\max}^3(G)\leq \left(\frac{(n-3)(n-1)}{3}+o(1)\right)\cdot 3^{n/9}.$$

\subsection{Lower bound}
Given a coset $g+H$ and $s\in H\backslash\{0\}$ 
where $\dim(H)=k-1$,
we define when $(g+H,\{s\})$ \emph{generates} a maximal sum-free subset of $G$ analogously to Section~\ref{sec:lb Z2}.
Similar to the behaviour for $\mathbb Z^k_2$,  type 3 maximal sum-free sets generated by a pair $(g+H,\{s\})$ with $s\in H\backslash\{0\}$ are those that contribute to the dominant term in the asymptotic development of $f_{\max}(G)$.

\begin{claim}
Consider any coset $B=g+H$ with $H\subseteq G$ a $(k-1)$-dimensional subspace; $g\notin H$ and any singleton $\{s\}\subseteq H\backslash\{0\}$. Then $(B,\{s\})$ generates at least $3^{n/9}-(2n/3-2)\cdot 3^{0.1n}$ maximal sum-free sets.

%There are two constants $C,\alpha>0$ such that any coset $g+H$ with $\dim(H)=k-1$ and $g\notin H$ and any singleton $\{s\}\subseteq H\cup(2g+H)$ generate at least $3^{n/9}-C\cdot3^{(1/9-\alpha/2)n}$ maximal sum-free sets.
\end{claim}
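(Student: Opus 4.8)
The plan is to mirror the structure of the proof of Claim~\ref{dominant term Z2} from the $\Z_2^k$ case, adapting the counting to the ternary setting. First I would fix $B=g+H$ with $\dim(H)=k-1$, $g\notin H$, and $s\in H\setminus\{0\}$. By the second bullet in the proof of Claim~\ref{type 3 msfs in Z3}, since $s\in H$, the link graph $L_{\{s\}}[B]$ consists of $|B|/3=n/9$ disjoint triangles of the form $\{x-s,x,x+s\}$, and hence has exactly $\mis(L_{\{s\}}[B])=3^{n/9}$ maximal independent sets. Each maximal independent set $I$ of this link graph is, by Lemma~\ref{extension link graph} read in reverse, a candidate extension; the set $\{s\}\cup I$ is sum-free, but it need not be \emph{maximal} in $G$. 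The goal is to show that all but a negligible (polynomial-times-$3^{0.1n}$) fraction of these $I$ do in fact yield maximal sum-free sets.

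Next I would call a maximal independent set $I$ in $L_{\{s\}}[B]$ \emph{bad} if $\{s\}\cup I$ fails to be maximal sum-free. As in the binary case, if $I$ is bad there exists some $s'$ with $\{s,s'\}\cup I$ still sum-free, where $s'\notin B$ (otherwise $I$ would not be maximal independent in $L_{\{s\}}[B]$). Since $B=g+H$ and $S$ may be taken disjoint from $B$, the extra element $s'$ must lie in $H\cup(2g+H)$ minus the already-used point; this gives at most $|H\setminus\{0,s\}|+|2g+H|\le (n/3-2)+n/3=2n/3-2$ choices for $s'$. For each such $s'$, a bad $I$ is in particular a maximal independent set in $L_{\{s,s'\}}[B]$, a link graph with $|S|=2$. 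By Claim~\ref{2S Z3} we have $\mis(L_{\{s,s'\}}[B])\le 3^{0.1n}$ regardless of which cosets $s$ and $s'$ occupy. Summing over the at most $2n/3-2$ values of $s'$ therefore bounds the number of bad $I$ by $(2n/3-2)\cdot 3^{0.1n}$, and subtracting from $3^{n/9}$ gives the claimed lower bound of $3^{n/9}-(2n/3-2)\cdot 3^{0.1n}$.

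The step I expect to require the most care is the bookkeeping for the count of candidate witnesses $s'$: I must verify that every bad $I$ is captured by \emph{some} two-element link graph $L_{\{s,s'\}}[B]$ with $s'$ ranging over the asserted set, and that the crude count $2n/3-2$ genuinely covers all relevant cosets (both $H\setminus\{0,s\}$ for a second element in the same coset as $s$, and $2g+H$ for an element in the opposite coset). The inequality $\mis(L_{\{s,s'\}}[B])\le 3^{0.1n}$ is already handed to us by Claim~\ref{2S Z3}, which treats all three coset configurations of a two-element $S$, so no fresh graph-theoretic computation is needed; the only subtlety is ensuring the disjointness conventions ($S$ disjoint from $B$, $s'\ne s$, $s'\ne 0$) are respected so that the union bound over bad configurations is valid. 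Since $3^{0.1n}$ is exponentially smaller than $3^{n/9}$ (as $0.1<1/9$), the subtracted term is of lower order and the claim follows.
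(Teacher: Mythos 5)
Your proof is correct and follows essentially the same route as the paper's: identify the $3^{n/9}$ maximal independent sets of $L_{\{s\}}[B]$, define bad sets via a witness $s'\in (H\backslash\{0,s\})\cup(2g+H)$ giving $2n/3-2$ choices, and apply the union bound with Claim~\ref{2S Z3}. No issues.
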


\begin{proof}
We fix $B=g+H$ with $\dim(H)=k-1$ and $g\notin H$, and $s\in H\backslash\{0\}$. We saw in the proof of Claim \ref{type 3 msfs in Z3} that $L_{\{s\}}[B]$ has $3^{n/9}$ maximal independent sets. Let $I\subseteq B$ be a maximal independent set in $L_{\{s\}}[B]$ and suppose that $\{s\}\cup I$ is not a maximal sum-free set in $G$; call such an $I$ \textit{bad}. Then there exists $s'\in H\backslash\{0,s\}\cup(2g+H)$ such that $\{s,s'\}\cup I$ is sum-free ($s'$ cannot belong to $B$ otherwise $I$ would not be a maximal independent set). So $I$ is a maximal independent set in $L_{\{s,s'\}}[B]$. By Claim \ref{2S Z3}, there are at most $3^{0.1n}$ maximal independent sets in $L_{\{s,s'\}}[B]$. Since there are $2n/3-2$ possibilities for $s'$, in total there are at most $(2n/3-2)\cdot 3^{0.1n}$ bad $I$. The claim immediately follows.
\end{proof}

As in Claim \ref{overcounting Z2}, there are only a few maximal sum-free sets generated by two distinct pairs $(g+H,\{s\})$.
\begin{claim}
Given distinct pairs $(B,\{s\}),(B',\{s'\})$ there are at most $n/9$ maximal sum-free subsets of $G$ generated by both pairs.
\end{claim}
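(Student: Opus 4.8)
The plan is to mirror the structure of Claim~\ref{overcounting Z2} from the $\Z_2^k$ setting, adapting the coset intersection analysis to the ternary case. Let $(B,\{s\})=(g+H,\{s\})$ and $(B',\{s'\})=(g'+H',\{s'\})$ be two distinct pairs with $s\in H\backslash\{0\}$ and $s'\in H'\backslash\{0\}$. Recall from the proof of Claim~\ref{type 3 msfs in Z3} that when $s\in H$, the link graph $L_{\{s\}}[B]$ is a disjoint union of $|B|/3=n/9$ triangles of the form $\{x-s,x,x+s\}$, and a maximal independent set picks exactly one vertex from each triangle. Thus any maximal sum-free set generated by $(B,\{s\})$ consists of $s$ together with $n/9$ elements of $B$, one chosen from each of the $n/9$ triangles.

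First I would dispose of the case $B=B'$: then necessarily $s\ne s'$, so any set generated by $(B,\{s\})$ contains $s$ but not $s'$ while any set generated by $(B',\{s'\})$ contains $s'$, and these cannot coincide unless $s'$ lies among the selected $B$-elements; a short argument as in the $\Z_2^k$ case shows there are at most polynomially (indeed at most one or a bounded number) such coincidences. The main case is $B\ne B'$. Since $B$ and $B'$ are cosets of $(k-1)$-dimensional subspaces $H,H'$ of $\Z_3^k$, the intersection $B\cap B'$ is either empty or a coset of $H\cap H'$, which has co-dimension two, so $|B\cap B'|\leq n/9$. A maximal sum-free set $A$ generated by both pairs satisfies $A\backslash\{s\}\subseteq B$ and $A\backslash\{s'\}\subseteq B'$, which forces most of $A$ to lie inside $B\cap B'$.

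Next I would count the number of such common sets. If $s=s'$, then $A$ must contain the entire intersection-portion determined by the triangle structure, pinning $A$ down to at most a constant number of choices. If $s\ne s'$, then for $A$ to be generated by $(B,\{s\})$ we need $s'\in B$ (it is one of the selected $B$-elements), and symmetrically $s\in B'$; the remaining $n/9-1$ selected elements must lie in $B\cap B'$, and the triangle partition restricts how these can be chosen. As in Claim~\ref{overcounting Z2}, the number of ways to complete $A$ inside $B\cap B'$ respecting both triangle decompositions is at most $|B\cap B'|\leq n/9$, since once we know $A$ meets each triangle of $L_{\{s\}}[B]$ in exactly one vertex and similarly for $L_{\{s'\}}[B']$, the overlap on $B\cap B'$ is essentially determined up to the choice of one triangle's representative.

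The main obstacle I anticipate is verifying the triangle-structure compatibility on $B\cap B'$: unlike the $\Z_2^k$ case where the link graph was a perfect matching (so each intersection element had a unique partner), here the link graph is a union of triangles, so I must check carefully that the two triangle partitions (one from $s$ on $B$, one from $s'$ on $B'$) do not allow more than $n/9$ mutually consistent selections. The key point will be that $B\cap B'$, being a coset of a co-dimension-two subspace, is itself partitioned into triangles by $\{s\}$ (provided $s\in H\cap H'$) and the constraint of simultaneously being a valid independent set for both link graphs leaves at most one free triangle-choice, bounding the count by $n/9$. I would conclude that in all cases the number of maximal sum-free subsets generated by both pairs is at most $n/9$, completing the proof.
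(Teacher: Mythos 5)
Your proposal is correct and takes essentially the same route as the paper: reduce to the bound $|B\cap B'|\le n/9$ (the intersection is empty if $H=H'$ and $B\ne B'$, and a coset of a codimension-two subspace if $H\ne H'$), then observe that a maximal sum-free set generated by both pairs consists of $\{s,s'\}$ together with $n/9-1$ elements of $B\cap B'$, giving at most $\binom{n/9}{n/9-1}=n/9$ possibilities. The ``triangle-structure compatibility'' you anticipate as an obstacle is a red herring --- the count is a pure cardinality argument identical to the second bullet of Claim~\ref{overcounting Z2}, and in the $B=B'$ case there are in fact no common sets at all, since $s'\in H\setminus\{0\}$ cannot belong to $B=g+H$.
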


\begin{proof}
Let $(B,\{s\})=(g+H,\{s\})$ and $(B',\{s'\})=(g'+H',\{s'\})$ be two distinct such pairs.

\begin{itemize}
    \item If $H=H'$ and $g+H=g'+H'$ then necessarily  $s\ne s'$ and so maximal sum-free sets generated by these pairs differ by at least one element. If $H=H'$ and $g+H=2g'+H'=2g'+H$ then $B\cap B'=\emptyset$ so maximal sum-free sets generated by these pairs are distinct.
    \item If $H\ne H'$ then $|B\cap B'|\leq n/9$. Indeed, 
    since $B$ and $B'$ are cosets, $B\cap B'$ is either empty or is a coset of a subspace of co-dimension two.
    
    Since the maximal sum-free sets generated by $(B,\{s\})$ (resp. $(B',\{s'\})$) consist of $n/9$ elements of $B$ (resp. $B'$) plus $s$ (resp. $s'$), we can conclude similarly to the second point in the proof of Claim \ref{overcounting Z2} that there are at most $n/9$ maximal sum-free sets generated by both pairs.
\end{itemize}
\end{proof}

As there are $\binom{(n-3)(n-1)/3}{2}$ couples of such pairs, we over-count only a number of maximal sum-free sets which is polynomial in $n$, and so $o(1)\cdot3^{n/9}$. Altogether we have that
\begin{align*}
    f_{\max}(G)&\geq \frac{(n-3)(n-1)}{3}(3^{n/9}-(2n/3-2)\cdot 3^{0.1n})-o(1)\cdot3^{n/9}\\
    &=\left(\frac{(n-3)(n-1)}{3}-o(1)\right)3^{n/9}.
\end{align*}

%%%%%%%%%
\section{Lower bound construcions}\label{sec:construct}

In this section we provide constructions to confirm the lower bound in Conjecture~\ref{conj1} for a range of groups.

\subsection{Groups with a large cyclic component} In \cite[Proposition 5.4]{BLST2} the authors give a construction which confirms Conjecture \ref{conj1} for the cyclic group $\Z_m$. We will extend it to groups that have a large cyclic component.

\begin{lemma}\label{large cyclic component}
Let $m\geq 9 $ be an integer. For any $n$-order abelian group $G$ of the form $G=\Z_m\times K$ we have
$$f_{\max}(G)\geq (2/3)^{1+n/m}\cdot 6^{(\frac{1}{18}-\frac{4}{9m})n}.$$
\end{lemma}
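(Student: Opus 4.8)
The plan is to reduce the general case $G=\Z_m\times K$ to the cyclic case $\Z_m$ via the machinery of Lemma~\ref{generalisation link graph}, which is precisely designed to lift a link-graph construction from a group $H$ to a product $H\times K$. The first step is to recall (or reconstruct) the lower-bound construction for $\Z_m$ from \cite[Proposition 5.4]{BLST2}: one exhibits disjoint sum-free sets $B,S\subseteq\Z_m$ together with an explicit understanding of the link graph $\Gamma:=L_S[B]$, from which $\mis(\Gamma)$ (and hence a lower bound on the number of maximal sum-free sets $S\cup I$ with $I$ a maximal independent set in $\Gamma$) can be computed. The natural choice is to take $S$ a singleton (or a very small set) so that $\Gamma$ decomposes into simple components—paths, matchings or short cycles—whose independence structure is transparent; one expects $\Gamma$ to consist of roughly $n/(2m)$-many short pieces each contributing a constant factor, giving the $6^{(1/18)n}$-type growth after accounting for the $K$-multiplicity.

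The second step is to apply Lemma~\ref{generalisation link graph} with $H=\Z_m$ to the chosen $B,S$. This produces disjoint sum-free sets $\tilde B=B\times K$ and $\tilde S=S\times\{0_K\}$ in $G=\Z_m\times K$, and the lemma gives the exact factorisation
\[
\mis(\tilde\Gamma)=\mis(\Gamma)\cdot\mis(\Gamma')^{a-1}\cdot\mis(\Gamma_1\rtimes\Gamma_2)^{(|K|-a)/2},
\]
where $a=|\{k\in K:2k=0_K\}|$ and $\tilde\Gamma:=L_{\tilde S}[\tilde B]$. Since $|K|=n/m$, the dominant exponent will come from the $(|K|-a)/2\approx n/(2m)$ copies of $\Gamma_1\rtimes\Gamma_2$, so I would compute $\mis(\Gamma_1\rtimes\Gamma_2)$ explicitly for the chosen $S$; this is the component count that must be pinned down to land the constants $\tfrac1{18}$ and $\tfrac4{9m}$ in the exponent. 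By Lemma~\ref{extension link graph} (in the contrapositive, constructive direction), each maximal independent set in $\tilde\Gamma$ extends $\tilde S$ to a distinct maximal sum-free set of $G$, so $\mis(\tilde\Gamma)$ is a genuine lower bound for $f_{\max}(G)$.

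The final step is bookkeeping: combine the factorisation with the explicit values of $\mis(\Gamma)$, $\mis(\Gamma')$ and $\mis(\Gamma_1\rtimes\Gamma_2)$, substitute $|K|=n/m$ and the value of $a$, and check that the resulting bound is at least $(2/3)^{1+n/m}\cdot 6^{(\frac{1}{18}-\frac{4}{9m})n}$. The $(2/3)^{1+n/m}$ prefactor strongly suggests that the per-component count for $\Gamma_1\rtimes\Gamma_2$ is of the form $6^{c}\cdot(2/3)$ (i.e. a clean constant times a small correction), with the leading $6^{n/(18)}$ coming from exponentiating a base-$6$ count over $\approx n/(2m)$ components, and the $(2/3)$ and $-4/(9m)$ corrections absorbing the smaller $\mis(\Gamma)$ and $\mis(\Gamma')$ factors and the difference between $|K|$ and $(|K|-a)/2$.

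\emph{The main obstacle} I anticipate is the exact determination of $\mis(\Gamma_1\rtimes\Gamma_2)$ for the specific $\Z_m$-construction: the semidirect-product-style graph $\Gamma_1\rtimes\Gamma_2$ glues two copies of $\Gamma_1$ along the type-2 edges, and its components need not coincide with those of $\Gamma$ itself, so one must re-analyse the combined graph rather than reuse the cyclic-case count directly. Getting the constant right here—so that the $6^{1/18}$ rate and the $4/(9m)$ lower-order term come out exactly—is where the real work lies; the rest is the routine algebra of collecting exponents, which I would not grind through in the sketch.
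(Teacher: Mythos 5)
Your high-level strategy coincides with the paper's: exhibit disjoint sum-free $B,S\subseteq\Z_m$, analyse $\Gamma:=L_S[B]$, and lift to $G=\Z_m\times K$ via Lemma~\ref{generalisation link graph}. But the proposal has a genuine gap: essentially all of the content of the proof lies in the explicit construction and the component analysis, and you leave both unspecified. Writing $m=9k+i$ with $0\le i\le 8$, the paper takes $B:=[3k+1,6k]$ and $S:=\{k,-2k\}$ --- crucially a two-element $S$, not a singleton. With this choice the link graph decomposes (up to boundary effects) into about $k/2\approx m/18$ copies of the prism $K_2\square K_3$, which has exactly $6$ maximal independent sets; this is the sole source of the base $6$ and the rate $n/18$ in the statement. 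Your tentative guess that $S$ is a singleton and that the components are ``paths, matchings or short cycles'' does not produce this: a singleton $s$ acting on an interval gives type-1 paths $x,x+s,x+2s$ together with type-2 edges, whose per-vertex growth rates ($2^{1/2}$, $3^{1/3}$, etc.\ over a base set of size $\approx m/3$) do not match the target bound, and in particular cannot generate the $(2/3)^{1+n/m}$ and $-\tfrac{4}{9m}n$ corrections, which come from the at most $8$ elements lost at the interval boundary ($|B|=3k=(m-i)/3$ with $i\le 8$) and from the handful of exceptional components (looped triangles, the bad loop at $5k+i=-4k$, and the degenerate triangle $\{4k,5k,6k\}$ when $i=0$). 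The four parity cases of $i-1$ and $k-i+1$, the separate treatment of $i=0$, and the verification that $\mis(\Gamma_1\rtimes\Gamma_2)=6^{k}$ (i.e., that the doubled graph again splits into prisms) are exactly the ``real work'' you defer, and without them the inequality cannot be checked.

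Two smaller points. First, the lower bound $f_{\max}(G)\ge\mis(\tilde\Gamma)$ is not an application of Lemma~\ref{extension link graph}, which runs in the opposite direction (from a maximal sum-free set to a maximal independent set); the paper instead argues directly that for distinct maximal independent sets $I,I'$ of $\tilde\Gamma$ the sum-free sets $I\cup\tilde S$ and $I'\cup\tilde S$ lie in distinct maximal sum-free subsets of $G$, because no further element of $\tilde B$ can be added to either. Second, your bookkeeping sketch misplaces where the exponent comes from: there are $\approx m/18$ prisms per ``layer'' and $|K|=n/m$ layers, so $\approx n/18$ prisms in total contributing $6^{n/18}$; the $(|K|-a)/2$ copies of $\Gamma_1\rtimes\Gamma_2$ each contribute $6^{k}$, not a constant, and the $a$ layers contributing only $6^{k/2}$ each are what force the $(2/3)^{a}$-type loss that is then bounded by $(2/3)^{|K|+1}$.
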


\begin{proof}
Consider the following construction for $\Z_m$. We write $m=9k+i$ with $0\leq i\leq 8$. We set $B:=[3k+1,6k]$ and $S:=\{k,-2k\}$. Note  both $B$ and $S$ are sum-free. 

Let us first assume that $i\geq 1$. The link graph $\Gamma:=L_S[B]$ can have four different shapes depending on the parities of $i-1$ and $k-i+1$. When $k-i+1$ is even, for any $x\in [3k+i,4k]$, $x\ne -x-2k$, so its component in $\Gamma$ is the graph $K_2 \square K_3$, as shown in Figure~5. When $k-i+1$ is odd, we have the same situation except when $x=3k+i+\frac{k-i}{2}$ whose component is a triangle with a type 2 loop at one vertex, since $2(3k+i+\frac{k-i}{2})=-2k$. We call this graph represented in Figure~7 a \textit{looped triangle}. Similarly, when $i-1$ is even, the component of any $x\in [3k+1,3k+i-1]$ is a $K_2\square K_3$ as shown in Figure~6. It is identical when $i-1$ is odd except for $3k+i/2$ whose component is a looped triangle as shown in Figure 8. Note also there is always a bad loop at $5k+i=-2k-2k$ which belongs to a $K_2\square K_3$ component.

We are now able to compute $\Gamma$, $\Gamma'$ and $\Gamma_1\rtimes\Gamma_2$ in each of those four cases. We do it here when $i-1$ and $k-i+1$ are both odd, the three other cases are similar. From the previous observations we deduce that $\Gamma$ consists of $k/2-2$ vertex-disjoint copies of $K_2\square K_3$, one copy of $K_2\square K_3$ with a bad loop at one vertex and two looped triangles. Then $\Gamma'$ is identical to $\Gamma$ but without the bad loop; from Figures 5-8 we deduce that $\Gamma_1\rtimes \Gamma_2$ consists of $k$ copies of $K_2\square K_3$. Indeed, when $\Lambda=K_2\square K_3$ as in Figures 5 and 6, $\Lambda_1\rtimes\Lambda_2$ consists of 2 vertex-disjoint copies of $K_2\square K_3$; when $\Lambda$ is a looped triangle as in Figures 7 and 8, $\Lambda_1\rtimes\Lambda_2$ is $K_2\square K_3$. 

Moreover $K_2\square K_3$ has 6 maximal independent sets, $K_2\square K_3$ with a loop at one vertex has 4 and a looped triangle has 2. Thus $\mis(\Gamma)=6^{k/2-2}\cdot4\cdot2^2$, $\mis(\Gamma')=6^{k/2-1}\cdot2^2$ and $\mis(\Gamma_1\rtimes\Gamma_2)=6^{k}$. 
Define $a$, $\Tilde{\Gamma}$, $\Tilde{S}$ and $\Tilde{B}$ as in Lemma~\ref{generalisation link graph}. Then by
Lemma~\ref{generalisation link graph},
\begin{align*}
    \mis(\Tilde{\Gamma})&=6^{k/2-2}\cdot4\cdot2^2\cdot6^{(k/2-1)(a-1)}\cdot2^{2(a-1)}\cdot6^{\frac{k}{2}(|K|-a)}\\
    &=(4/6)^{a+1}\cdot  6^{\frac{k}{2}|K|} \geq (2/3)^{|K|+1}\cdot6^{\frac{m-i}{18}|K|}\\
    &\geq (2/3)^{|K|+1}\cdot6^{\frac{m-8}{18}|K|}= (2/3)^{n/m+1}\cdot6^{(\frac{1}{18}-\frac{4}{9m})n}.
\end{align*}

A maximal independent  set $I$ in $\Tilde{\Gamma}$ is such that $I \cup \Tilde{S}$ is sum-free, but it might not be maximal. However no further element from $\Tilde{B}$ can be added. Hence two distinct maximal independent sets $I$, $I'$ in $\Tilde{\Gamma}$ are such that $I \cup \Tilde{S}$ and $I' \cup \Tilde{S}$ lie in two distinct maximal sum-free subsets of $G$. Therefore we have $\fm(G)\geq \mis(\Tilde{\Gamma})\geq (2/3)^{1+n/m}\cdot 6^{(\frac{1}{18}-\frac{4}{9m})n}$. We obtain greater or equal bounds in the other three cases.

\begin{multicols}{2}

\begin{tikzpicture}[scale=1.3]
\draw[blue] (-0.5,0) -- ++(-1.5,0.866) -- ++(0,-1.732) -- cycle;
\draw[blue] (0.5,0) -- ++(1.5,0.866) -- ++(0,-1.732) -- cycle;
\draw[red] (-0.5,0) -- (0.5,0);
\draw[red] (-2,0.866) -- (2,0.866);
\draw[red] (-2,-0.866) -- (2,-0.866);
\node at (-0.3,0.2) {$x+k$};
\node at (0.45,-0.2) {$-x$};
\node at (-2.25,0.866) {$x$};
\node at (-2.55,-0.866) {$x+2k$};
\node at (2.65,0.866) {$-x-2k$};
\node at (2.6,-0.866) {$-x-k$};
\fill[black] (0.5,0) circle (2pt);
\fill[black] (-0.5,0) circle (2pt);
\fill[black] (-2,0.866) circle (2pt);
\fill[black] (-2,-0.866) circle (2pt);
\fill[black] (2,0.866) circle (2pt);
\fill[black] (2,-0.866) circle (2pt);
\end{tikzpicture}

\begin{center}
\textbf{Figure 5 - } The component of $x\in [3k+i,4k]$.
\end{center}

\begin{tikzpicture}[scale=1.3]
\draw[blue] (-0.5,0) -- ++(-1.5,0.866) -- ++(0,-1.732) -- cycle;
\draw[blue] (0.5,0) -- ++(1.5,0.866) -- ++(0,-1.732) -- cycle;
\draw[red] (-0.5,0) -- (0.5,0);
\draw[red] (-2,0.866) -- (2,0.866);
\draw[red] (-2,-0.866) -- (2,-0.866);
\node at (-0.3,0.2) {$x+k$};
\node at (0.3,-0.2) {$-x-3k$};
\node at (-2.25,0.866) {$x$};
\node at (-2.55,-0.866) {$x+2k$};
\node at (2.65,0.866) {$-x-2k$};
\node at (2.6,-0.866) {$-x-k$};
\fill[black] (0.5,0) circle (2pt);
\fill[black] (-0.5,0) circle (2pt);
\fill[black] (-2,0.866) circle (2pt);
\fill[black] (-2,-0.866) circle (2pt);
\fill[black] (2,0.866) circle (2pt);
\fill[black] (2,-0.866) circle (2pt);
\end{tikzpicture}

\begin{center}
\textbf{Figure 6 - } The component of $x\in [3k+1,3k+i-1]$.
\end{center}

\end{multicols}

\begin{multicols}{2}
\begin{center}
\begin{footnotesize}
\begin{tikzpicture}[scale=1.3]
\draw[blue] (0,0) -- (-1,1.732) -- ++(-1,-1.732);
\draw[thick,blue] (0,0) -- (-2,0);
\draw[thick,dashed,red] (0,0) -- (-2,0);
\draw[red] (-1,1.732) arc (-90:-480:0.25cm);
\fill[black] (0,0) circle (2pt);
\fill[black] (-1,1.732) circle (2pt);
\fill[black] (-2,0) circle (2pt);
\node at (0,-0.2) {$4k+i+\frac{k-i}{2}$};
\node at (-2,-0.2) {$5k+i+\frac{k-i}{2}$};
\node at (-0.05,1.732) {$3k+i+\frac{k-i}{2}$};
\end{tikzpicture}
\end{footnotesize}
\end{center}

\begin{center}
\textbf{Figure 7 - } The component of $3k+i+\frac{k-i}{2}$.
\end{center}

\begin{center}
\begin{footnotesize}
\begin{tikzpicture}[scale=1.3]
\draw[blue] (0,0) -- (-1,1.732) -- ++(-1,-1.732);
\draw[thick,blue] (0,0) -- (-2,0);
\draw[thick,dashed,red] (0,0) -- (-2,0);
\draw[red] (-1,1.732) arc (-90:-480:0.25cm);
\fill[black] (0,0) circle (2pt);
\fill[black] (-1,1.732) circle (2pt);
\fill[black] (-2,0) circle (2pt);
\node at (0,-0.2) {$3k+i/2$};
\node at (-2,-0.2) {$4k+i/2$};
\node at (-0.25,1.732) {$5k+i/2$};
\end{tikzpicture}
\end{footnotesize}
\end{center}

\begin{center}
\textbf{Figure 8 - } The component of $3k+i/2$.
\end{center}

\end{multicols}

\begin{center}
Type 1 edges are represented in  blue and type 2 edges in red. Edges that are both type 1 and type 2 are represented by red and blue dashed lines.
\end{center}

Let us now assume $i=0$. 
When $k$ is even, $\Gamma$ contains  $k/2-1$ disjoint copies of $K_2\square K_3$;
indeed the component of every $x \in [3k+1,4k-1]\setminus \{3k+k/2\}$ is as in Figure~5.
The other two components of $\Gamma$ are: a looped triangle $\{3k+k/2,4k+k/2,5k+k/2\}$
with a type 2 loop at $3k+k/2$, a type 2 edge between $4k+k/2$ and $5k+k/2$ and all possible (non-loop) type 1 edges; 
a triangle $\{4k,5k,6k\}$ with a loop at $5k$ (which is both bad and type 2) and a bad loop at $6k$, where all non-loop edges of the triangle are type 1, except that additionally the edge between $4k$ and $6k$ is also type 2.
Note that $\mis (\Gamma)=2 \cdot 6^{k/2-1}$, $\mis (\Gamma ')=4 \cdot 6^{k/2-1}$ 
and $\mis(\Gamma_1\rtimes\Gamma_2)=6^{k}$.
Then by
Lemma~\ref{generalisation link graph},
\begin{align*}
    \mis(\Tilde{\Gamma})&=2 \cdot 6^{k/2-1}\cdot4 ^{a-1} \cdot6^{(k/2-1)(a-1)}\cdot6^{\frac{k}{2}(|K|-a)}\\
    &=\frac{1}{2} \cdot (2/3)^{a}\cdot  6^{\frac{k}{2}|K|}
    \geq \frac{1}{2} \cdot (2/3)^{|K|}\cdot6^{\frac{m}{18}|K|}
    \geq  (2/3)^{n/m+1}\cdot6^{(\frac{1}{18}-\frac{4}{9m})n}.
\end{align*}

When $k$ is odd, $\Gamma$ consists of $(k-1)/2$ copies of $K_2\square K_3$ and the triangle $\{4k,5k,6k\}$ with two loops as in the $k$ even case.  Therefore, 
$\mis (\Gamma)= 6^{(k-1)/2}$, $\mis (\Gamma ')= 2 \cdot 6^{(k-1)/2}$ 
and $\mis(\Gamma_1\rtimes\Gamma_2)=6^{k}$. Each of these terms are at least as big as the corresponding terms in the $k$ even case, so again by Lemma~\ref{generalisation link graph} we conclude that $\mis(\Tilde{\Gamma}) \geq (2/3)^{n/m+1}\cdot6^{(\frac{1}{18}-\frac{4}{9m})n}$. 
Therefore, arguing as before, in both cases $\fm(G)\geq \mis(\Tilde{\Gamma})\geq (2/3)^{1+n/m}\cdot 6^{(\frac{1}{18}-\frac{4}{9m})n}$, as desired.
\end{proof}

This lemma now easily allows us to confirm the lower bound in Conjecture \ref{conj1} for groups which have a sufficiently large cyclic component.

\begin{proposition}\label{prop34}
Let $G$ be an $n$-order abelian group. If there exists $m\geq 3084$ and an abelian group $K$ such that $G=\Z_m\times K$, then $f_{\max}(G)\geq 2^{n/7}$.
\end{proposition}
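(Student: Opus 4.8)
The plan is to apply Lemma~\ref{large cyclic component} and then reduce the statement to a single elementary numerical inequality. Since $G=\Z_m\times K$ we have $n=m|K|$, so on writing $t:=|K|=n/m$ we have $t\in\N$ with $t\geq 1$. By Lemma~\ref{large cyclic component} (applicable here as $m\geq 3084\geq 9$) it therefore suffices to prove that
$$(2/3)^{1+t}\cdot 6^{\left(\frac{1}{18}-\frac{4}{9m}\right)mt}\geq 2^{mt/7}$$
for every integer $t\geq 1$ and every $m\geq 3084$.

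I would then take $\log_2$ of both sides and collect terms, using $\log_2(2/3)=-\log_2\tfrac32$ and $\log_2 6=1+\log_2 3$. After grouping the coefficients of $mt$, of $t$, and the constant, the difference between the left-hand exponent and $mt/7$ takes the shape
$$t\bigl(c_1 m-c_2\bigr)-c_3,\qquad c_1:=\tfrac{\log_2 6}{18}-\tfrac17,\quad c_2:=\log_2\tfrac32+\tfrac{4\log_2 6}{9},\quad c_3:=\log_2\tfrac32,$$
so the claim becomes $t(c_1 m-c_2)\geq c_3$ for all integers $t\geq 1$ and all $m\geq 3084$.

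The key observation is that $c_1>0$ (indeed $\log_2 6/18\approx 0.14361>1/7\approx 0.14286$), so for $m\geq 3084$ the coefficient $c_1 m-c_2$ is positive; consequently the left-hand side is increasing in $t$ and it suffices to verify the inequality at $t=1$, that is, $c_1 m\geq c_2+c_3$. As $c_1 m$ is increasing in $m$, this in turn reduces to the single check $c_1\cdot 3084\geq c_2+c_3$.

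The only delicate point — and the reason for the precise constant $3084$ — is that this threshold is sharp to the integer. Substituting $\log_2 3=1.58496\ldots$ gives $c_1\cdot 3084\approx 2.318813$ while $c_2+c_3\approx 2.318797$, so the inequality holds (with room to spare of about $1.6\times 10^{-5}$), whereas $c_1\cdot 3083\approx 2.318061<c_2+c_3$ would fail. Thus the main obstacle is purely arithmetical: one must carry enough decimal places in $\log_2 3$ (hence in $c_1$ and $c_2+c_3$) to certify $c_1\cdot 3084\geq c_2+c_3$ rigorously, for instance by bounding $\log_2 3$ from above and below by explicit rationals. There is no conceptual difficulty beyond this careful estimation.
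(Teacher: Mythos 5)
Your proposal is correct and follows exactly the paper's route: the paper's own proof is the one-line application of Lemma~\ref{large cyclic component} followed by the asserted inequality $(2/3)^{1+n/m}\cdot 6^{(\frac{1}{18}-\frac{4}{9m})n}\geq 2^{n/7}$ for $m\geq 3084$. Your reduction to $t(c_1m-c_2)\geq c_3$ and the numerical verification at $m=3084$, $t=1$ (including the observation that the threshold is sharp, since $m=3083$ fails) simply supplies the arithmetic the paper leaves implicit.
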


\begin{proof}
Provided $n\geq m\geq 3084$, then by Lemma \ref{large cyclic component},
$$f_{\max}(G)\geq (2/3)^{1+n/m}\cdot 6^{(\frac{1}{18}-\frac{4}{9m})n}\geq 2^{n/7}.$$
%In order to have $f_{\max}(G)\geq 2^{n/7}$ in Lemma \ref{large cyclic component}, it suffices to choose $m$ such that for all $n\geq m$, $$m\geq \frac{4/9+(\log3-1)/(\log6)}{1/18-1/(7\log6)-(\log3-1)/(n\log6)}.$$
%Since the above expression is decreasing in $n$ it suffices to take $m$ such that $$m\geq \frac{4/9+(\log3-1)/(\log6)}{1/18-1/(7\log6)-(\log3-1)/(m\log6)},$$
%thus, $$m\geq\frac{4/9+2(\log3-1)/\log6}{1/18-1/(7\log6)}\approx3083.979.$$
\end{proof}

\subsection{Some type \rom{3} groups} 
Amongst all abelian groups of order $n$, 
Theorem~\ref{mu(G)} tells us that the smallest values of $\mu (G) $ are obtained by type \rom{3} groups. So one might think they are the most likely groups to disprove the lower bound in Conjecture~\ref{conj1}. However, in this subsection we give lower bound constructions for some type \rom{3} groups.

\begin{proposition}
Let $G$ be a type \rom{3} group of order $n$ and let $m$ be its exponent. If $m\in\{7,13,19\}$, then $\fm(G)\geq 2^{\mu(G)/2-2}\geq 2^{n/7-2}$.
\end{proposition}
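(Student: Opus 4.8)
The plan is to mimic the structure of Lemma~\ref{large cyclic component} but working inside a type \rom{3} group whose exponent $m$ is a prime $\equiv 1 \pmod 3$, exploiting the fact that for such small primes $m\in\{7,13,19\}$ we can find an explicit sum-free set $S$ of size $2$ in $\Z_m$ whose link graph on a large sum-free block $B\subseteq \Z_m$ decomposes into many disjoint copies of a fixed small graph with at least two maximal independent sets per copy. Concretely, I would first recall that for a type \rom{3} group Theorem~\ref{mu(G)} gives $\mu(G)=n(1/3-1/(3m))$, so the target bound $2^{\mu(G)/2-2}$ amounts to producing roughly $2^{\mu(G)/2}$ maximal sum-free sets. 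The natural route is to write $G=\Z_m\times K$ (every abelian group of exponent $m$ has $\Z_m$ as a direct factor) and build a link graph over $\Z_m$ first, then lift via Lemma~\ref{generalisation link graph} to all of $G$.

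First I would fix, for each of the three primes $m\in\{7,13,19\}$, a concrete choice of a sum-free set $B\subseteq\Z_m$ of size as close to $\mu(\Z_m)=(m-1)/3$ as possible together with a two-element sum-free set $S=\{s_1,s_2\}\subseteq\Z_m$ disjoint from $B$, and compute the link graph $\Gamma:=L_S[B]$ explicitly. Because $m$ is small and fixed, this is a finite check: one verifies that $\Gamma$ splits into vertex-disjoint small components (as in the $\Z_3^k$ analysis, one expects copies of $C_6$, $K_2\square K_3$, or looped triangles), each contributing a bounded factor to $\mis(\Gamma)$. The key numerical goal is to arrange that $\mis(\Gamma)\geq 2^{|B|/2}$, i.e.\ on average at least $\sqrt 2$ maximal independent sets per vertex, since squaring through the product structure of Lemma~\ref{generalisation link graph} will then yield the $2^{\mu(G)/2}$ target.

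Next I would apply Lemma~\ref{generalisation link graph} with $H=\Z_m$ and this $K$ to obtain $\Tilde\Gamma:=L_{\Tilde S}[\Tilde B]$ on $G$, giving
$$
\mis(\Tilde\Gamma)=\mis(\Gamma)\cdot\mis(\Gamma')^{a-1}\cdot\mis(\Gamma_1\rtimes\Gamma_2)^{(|K|-a)/2},
$$
where $a=|\{k\in K : 2k=0_K\}|$. Since $m$ is odd, $\Z_m$ has no involutions, but $K$ may; the components $\Gamma'$, $\Gamma_1\rtimes\Gamma_2$ must be computed from the fixed small $\Gamma$ (again a finite check for each $m$), and one shows each factor is at least $2^{|B|/2}$ to the appropriate multiplicity, so that $\mis(\Tilde\Gamma)\geq 2^{(|B|/2)|K|-2}=2^{\mu(G)/2-2}$. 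Finally, exactly as in the closing paragraph of the proof of Lemma~\ref{large cyclic component}, distinct maximal independent sets $I$ in $\Tilde\Gamma$ give rise to distinct maximal sum-free sets containing $\Tilde S$ (no further element of $\Tilde B$ can be added), so $\fm(G)\geq\mis(\Tilde\Gamma)\geq 2^{\mu(G)/2-2}$, and the bound $2^{n/7-2}$ follows from $\mu(G)\geq 2n/7$.

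The main obstacle I anticipate is the explicit component analysis of $\Gamma$ for each prime: one must choose $B$ and $S$ so that the link graph tiles cleanly and the per-vertex $\mis$-density clears the $\sqrt 2$ threshold, which is precisely where the restriction $m\in\{7,13,19\}$ (rather than all primes $\equiv 1\pmod 3$) is forced. For larger exponents the size $|B|\approx (m-1)/3$ grows but the available two-element sum-free sets in $\Z_m$ may not produce components rich enough in maximal independent sets to beat $2^{|B|/2}$, so I expect the proof to rely on a case-by-case computation—ideally tabulated—rather than a uniform argument, with the verification that $\mis(\Gamma_1\rtimes\Gamma_2)$ meets its required bound being the most delicate point.
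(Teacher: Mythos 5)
There is a genuine gap in your plan, and it sits exactly where you flagged ``the most delicate point'': the density target is not achievable with a two-element $S$. To get $\fm(G)\geq 2^{\mu(G)/2-O(1)}$ via Lemma~\ref{generalisation link graph} you need the lifted link graph $\Tilde\Gamma$ on $|B|\cdot|K|\approx\mu(G)$ vertices to average at least $2^{1/2}$ maximal independent sets per vertex. But with $|S|=2$ the components of $\Gamma$, $\Gamma'$ and $\Gamma_1\rtimes\Gamma_2$ are graphs of the kind appearing in Claim~\ref{2S Z3} and Lemma~\ref{large cyclic component} --- $C_6$, $K_2\square K_3$, looped triangles --- and these all fall short: $\mis(C_6)^{1/6}=5^{1/6}\approx 1.31$ and $\mis(K_2\square K_3)^{1/6}=6^{1/6}\approx 1.35$, both strictly below $\sqrt 2\approx 1.414$. (That sub-$\sqrt2$ density is fine for the weaker target $2^{n/7}$ of Lemma~\ref{large cyclic component}, since $6^{1/18}>2^{1/7}$, but not for $2^{\mu(G)/2}$.) A per-vertex density of at least $\sqrt 2$ essentially forces every component to be a single edge, a triangle or a $K_4$, and no choice of a size-two sum-free $S$ in $\Z_m$ produces such a decomposition on a block of size $\approx(m-1)/3$ for $m\in\{7,13,19\}$; for $m=7$ the block has only two vertices, so it cannot even contain a triangle. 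So the route through a size-two $S$ and Lemma~\ref{generalisation link graph} cannot close.

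The paper's proof avoids this by taking $S$ to be a \emph{singleton} and $B$ to be the full extremal sum-free set: for $m=13$ one takes a subgroup $H<G$ of index $13$, $T:=(1+H)\cup(4+H)\cup(6+H)\cup(9+H)$ of size $4n/13=\mu(G)$, and $x\in 3+H$; the cosets are chosen so that $L_{\{x\}}[T]$ is a perfect matching (pairing $1+H$ with $4+H$ and $6+H$ with $9+H$) apart from a single loop at $2x$, whence $\mis=2^{\mu(G)/2-1}$. The $m=19$ case is analogous with six cosets and two loops, and $m=7$ is quoted from \cite[Proposition 5.7]{BLST2}. A singleton link graph on a union of cosets hits the density $2^{1/2}$ per vertex exactly, which is why no lifting through Lemma~\ref{generalisation link graph} is needed; the only work is choosing the cosets of $T$ and the coset of $x$ so that the matching condition holds, which is where the restriction to $m\in\{7,13,19\}$ enters. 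Your framing of $\mu(G)$, the final ``distinct maximal independent sets give distinct maximal sum-free sets'' step, and the reduction $2^{\mu(G)/2-2}\geq 2^{n/7-2}$ are all fine; it is the choice $|S|=2$ that needs to be replaced by $|S|=1$ together with the coset construction of $B$.
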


\begin{proof}
If $m=7$ then $G=\Z_7^k$; Proposition 5.7 in \cite{BLST2} gives that $\fm(G)\geq  2^{\mu({G})/2-1}=2^{n/7-1}$ in this case. If $m=13$, let $H<G$ be a subgroup of $G$ of index 13 and $x\in 3+H$. Let $T:=(1+H)\cup(4+H)\cup(6+H)\cup(9+H)$ be a sum-free set of size $4n/13$ and consider the link graph $\Gamma:=L_{\{x\}}[T]$. Then there is a loop at $2x\in 6+H$, and the remaining edges of $\Gamma$ form a perfect matching between the elements of $1+H$ and those of $4+H$ and between those of $6+H$ and those of $9+H$. Thus $\mis(\Gamma)=2^{2n/13-1}$ and $\fm(G)\geq 2^{2n/13-1}= 2^{\mu(G)/2-1}$. If $m=19$ then let $H<G$ be a subgroup of index 19 and $x\in 6+H$. By considering the sum-free set $T:=(1+H)\cup(3+H)\cup(12+H)\cup(14+H)\cup(16+H)\cup(18+H)$ and the link graph $\Gamma:=L_{\{x\}}[T]$ we obtain that $\mis(\Gamma)=2^{3n/19-2}$ (there is a loop at $2x\in 12+H$ and another one at the unique $y\in 3+H$ such that $2y=x$), thus, $\fm(G)\geq2^{3n/19-2}=2^{\mu(G)/2-2}$.
\end{proof}

%%%%%%%%%%
\section{Concluding remarks and open problems}\label{sec:con}
\subsection{Structural results and sharp  bounds on $\fm (G)$}
In this paper we have provided a sharp count on the number of maximal sum-free sets in both $\mathbb Z^k_2$ and $\mathbb Z^k_3$. Crucial ingredients of both these proofs were structural results for `large' sum-free subsets of $\mathbb Z^k_2$~\cite{ped, dav} and $\mathbb Z^k_3$~\cite{lev}. 
To obtain sharp bounds on $\fm (G)$ in general, further such 
 structural results will be needed.
Lemma~5.6 in~\cite{GR-g} provides a structural result for large sum-free subsets of
type~\rom{1} groups (and was applied to obtain bounds on $\fm (G)$
in~\cite{ls}). 
However, our understanding of 
 how $\fm (G)$ should behave in general is still rather limited. Thus, it is unclear how precise structural results need to be for applications to the $\fm (G)$ problem.
Perhaps the next natural question to consider is to determine the asymptotic behaviour of $\fm (\mathbb Z^k_p)$ when $p \geq 5$ is prime.

\subsection{Counting complete caps in projective spaces}\label{sec:pro}
Let $PG(n,q)$ be the projective space of dimension $n$ over the Galois field $\mathbb F_q$.
An \emph{$\ell$-cap} is a set of $\ell$ points no three of which are collinear. An $\ell$-cap is said to be \emph{complete} if it is not contained in an $(\ell + 1)$-cap. Complete caps are well-studied objects, for example, the size of the smallest complete cap in a projective space (see e.g.~\cite{kimvu}) and the spectrum of values of $\ell$ for which there is a complete $\ell$-cap (see e.g.~\cite{dav2}).

In this setting note that a maximal sum-free set in $\mathbb Z^{k+1}_2$ is precisely a complete cap in $PG(k,2)$. Thus, Theorem~\ref{main} asymptotically determines the number of complete caps in $PG(k,2)$. It would be interesting to obtain analogous counting results for other projective spaces. Furthermore, it would be interesting to attack other open problems on complete caps in $PG(k,2)$ (such as those in~\cite[Section 8]{dav2}) by  instead working in the setting of maximal sum-free sets in $\mathbb Z^{k+1}_2$.

\subsection{Maximal distinct sum-free sets}\label{sec:open}

%In this section we introduce the notion of distinct sum-free sets. 
Let $G$ be an abelian group of order $n$. We call a triple $\{x,y,z\}\subseteq G$ a \textit{distinct Schur triple} if it is a Schur triple and its three elements are distinct. A subset $S\subseteq G$ is a \textit{distinct sum-free set} if it does not contain any distinct Schur triple. A distinct sum-free set $S$ is a \textit{maximal distinct sum-free set} if $S$ is not properly contained in another distinct sum-free subset of $G$. Denote by $\mu^{\star}(G)$ the size of a largest distinct sum-free subset of $G$. We write $f^{\star}(G)$ for the number of distinct sum-free subsets of $G$ and $\fm^{\star}(G)$ for the number of maximal distinct sum-free subsets of $G$. Note that an important feature of distinct sum-free sets in comparison with sum-free sets is that they can contain the zero element. Moreover, we immediately have that a sum-free set is a distinct sum-free set. Also, if $A\subseteq G$ is a distinct sum-free subset of $G$, it can be written as $A=B\cup C$ with $B$ sum-free and $|C|=o(n)$. Indeed, $A\backslash\{0\}$ (or $A$ if $0\notin A$) has at most $n-1=o(n^2)$ Schur triples, because such a Schur triple is necessarily of the form $\{x,x,2x\}$ and there are $|A\backslash\{0\}|\leq n-1$ choices for $x\in A\backslash\{0\}$. Therefore, by  Green's removal lemma~\cite[Theorem 1.4]{G-R}, $A\backslash\{0\}$ can be made sum-free by removing $o(n)$ elements, and so $A$ can be made sum-free by removing at most $o(n)+1=o(n)$ elements. It follows from this observation that
\begin{equation}\label{eqa}
   \mu(G)\leq \mu^{\star}(G)\leq\mu(G)+o(n) 
\end{equation}
and 
\begin{equation}\label{eqb}
    f(G)\leq f^{\star}(G)\leq 2^{o(n)}f(G).
\end{equation}

It is natural to guess that $ \fm(G)\leq \fm ^{\star}(G)$ for all finite abelian groups $G$. However, this seems more challenging to prove than (\ref{eqa}) and (\ref{eqb}).
 Indeed, a maximal sum-free set is not necessarily a maximal distinct sum-free set, and we can even have two maximal sum-free sets that lie in the same maximal distinct sum-free set.

\begin{example}
In $G=\Z_7$, $\{2,3\}$ and $\{3,4\}$ are two maximal sum-free sets that lie in the same maximal distinct sum-free set $\{2,3,4\}$. However, we still have $\fm(\Z_7)=9<\fm^\star(\Z_7)=14$.
\end{example}

This observation motivates the following question.

\begin{question}\label{exception}
Are there some abelian groups $G$ for which $\fm^{\star}(G)<\fm(G)$?
\end{question}

From  the perspective that a group answering  Question~\ref{exception} positively  would perhaps be an exception, one can wonder in general if $\fm(G)$ is much smaller than $\fm^{\star}(G)$.

\begin{question}\label{exponential gap}
For which abelian groups $G$ is $\fm(G)$ exponentially smaller than  $\fm^{\star}(G)$?
\end{question}

We will demonstrate examples of abelian groups $G$ for which the behaviour in 
 Question~\ref{exponential gap} does occur by giving a lower bound on $\fm^{\star}(G)$. For this we need to modify our definition of the link graph to fit  the concept of distinct sum-free sets. For subsets $B,S\subseteq G$ let $L^{\star}_S[B]$ be the \textit{distinct link graph of $S$ on $B$} defined as follows. Its vertex set is $B$ and its edge set consists of the following edges:

\begin{enumerate}[label = (\roman*),itemsep=0pt]
    \item two distinct vertices $x,y\in B$ are adjacent if there exists $s\in S$ such that $\{x,y,s\}$ is a distinct Schur triple;
    \item there is a loop at a vertex $x\in B$ if there exist distinct $s,s'\in S$  such that $\{x,s,s'\}$ is a distinct Schur triple.
\end{enumerate}

We will use distinct link graphs to provide lower bound constructions on $\fm^{\star}(G)$ for finite abelian groups $G$. Speaking of which, note that if $B,S\subseteq G$ are disjoint distinct sum-free subsets of $G$, we have that any two  maximal independent sets $I, I'$ in $L^{\star}_S[B]$ 
are such that $I \cup S$ and $I' \cup S$
lie in two different maximal distinct sum-free subsets of $G$, because no more elements from $B$ can be added to either without destroying the distinct sum-free property. Thus, for such $B$ and $S$ we have
\begin{equation}\label{lower bound fm star link graph}
    \fm^{\star}(G)\geq \mis(L_S^{\star}[B]).
\end{equation}

\begin{proposition}\label{lower bound fm star}
For any finite abelian group $G$ which is not of the form $G=\Z_2^k\times K$ with $k\geq 1$ and $|K|\geq 3$ odd, we have $\fm^{\star}(G)\geq 2^{\frac{\mu(G)-1}{2}}$.
\end{proposition}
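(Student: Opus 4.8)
The plan is to use the bound (\ref{lower bound fm star link graph}): it suffices to produce disjoint distinct sum-free sets $B,S\subseteq G$ with $\mis(L^{\star}_S[B])\geq 2^{(\mu(G)-1)/2}$. I will always take $S=\{s\}$ to be a singleton; then the distinct link graph $L^{\star}_{\{s\}}[B]$ is loopless, since clause (ii) in its definition requires two \emph{distinct} elements of $S$. Hence each $x\in B$ is adjacent only to whichever of $x+s$, $x-s$, $s-x$ lie in $B$, and if I can arrange $L^{\star}_{\{s\}}[B]$ to be a perfect matching then $\mis=2^{|B|/2}$, while a near-perfect matching (a matching covering all but one vertex, the exceptional vertex being isolated) still gives $2^{(|B|-1)/2}$. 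The whole task therefore reduces to choosing a sum-free set $B$ of size $\mu(G)$ together with a single $s\notin B$ for which the induced structure on $B$ is a (near-)perfect matching.

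The prototype is the type~\rom{2} case, which I expect to give the bound exactly. Here $|G|$ is odd and $\mu(G)=|G|/3$; taking a surjection $\phi\colon G\to\Z_3$ and setting $B:=\phi^{-1}(1)$ and $s\in\phi^{-1}(2)$, one checks that no difference edge survives (as $\phi(x\pm s)\notin\{1\}$) while the sum relation $x+y=s$ pairs each $x\in B$ with $s-x\in B$. This involution $x\mapsto s-x$ is fixed-point free except at the unique $x_0$ with $2x_0=s$ (here oddness of $|G|$ is used), and that vertex becomes isolated in the distinct link graph. Thus $L^{\star}_{\{s\}}[B]$ is a perfect matching on $\mu(G)-1$ vertices and $\mis=2^{(\mu(G)-1)/2}$. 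For type~\rom{3} groups the same philosophy works with \emph{difference} edges: one exhibits a maximum sum-free set $\bar B$ in the relevant cyclic quotient together with $\bar s\notin\bar B+\bar B$ so that translation by $\bar s$ pairs $\bar B$ cleanly (for example $\bar B=\{1,4,6,9\}$, $\bar s=3$ in $\Z_{13}$), the density $\mu/|G|<1/3$ leaving room for such an $\bar s$. The group $\Z_2^k$ is immediate, because in characteristic $2$ the three candidate neighbours coincide ($x+s=x-s=s-x$), so $L_{\{s\}}[x+W]$ is automatically a perfect matching, exactly as in the proof of Theorem~\ref{main}.

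To pass from such quotient constructions to the whole group I would use Lemma~\ref{generalisation link graph}. Writing $G=\Z_m\times K$ with $m$ the exponent, one verifies $\mu(G)=\mu(\Z_m)\,|K|$ in each of the three types, so that a perfect matching in the quotient lifts: every difference edge between two distinct residues blows up to a perfect matching between two cosets, and the lemma assembles one copy of the base graph with $a-1$ copies of $\Gamma'$ and $(|K|-a)/2$ copies of $\Gamma_1\rtimes\Gamma_2$, where $a$ counts the involutions. When the base is a clean matching these pieces are themselves matchings, giving $\mis=2^{\mu(G)/2}\geq 2^{(\mu(G)-1)/2}$.

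The interaction between this lifting and the $2$-torsion of $G$ is where I expect the main obstacle to lie. The involutions counted by $a$ are precisely the places where the clean self-pairing $x\mapsto s-x$ degenerates: a coset of fixed points appears and the intended matching collapses into cycles. When the $2$-Sylow subgroup has an element of order $4$, or $G$ has odd order, this can be avoided; but when the $2$-Sylow subgroup is elementary abelian and nontrivial while the odd part is also nontrivial—exactly the excluded family $\Z_2^k\times K$—the abundance of involutions makes the collapse unavoidable, which is why those groups must be omitted. The most delicate technical point, however, is the dense type~\rom{1} situation, where $\mu(G)/|G|>1/3$: there the extremal set is so large that $\bar s\in\bar B+\bar B$ is forced, so a spurious sum edge threatens every difference matching, and conversely every sum matching is obstructed by difference edges. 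Producing an $s$ whose difference and sum relations do not simultaneously activate, and thereby recovering the degree-one property for a near-maximum sum-free set, is the crux of the argument.
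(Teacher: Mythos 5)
Your proposal correctly handles the type~\rom{2} case (your construction $B=\phi^{-1}(1)$, $s\in\phi^{-1}(2)$, with the unique fixed point of $x\mapsto s-x$ isolated, is exactly the one in the paper) and $\Z_2^k$, but it leaves the type~\rom{1} case --- which you yourself flag as ``the crux of the argument'' --- genuinely unresolved, and your type~\rom{3} argument rests on one example in $\Z_{13}$ plus an unproved ``the density leaves room for such an $\bar s$'' claim. Type~\rom{1} is not a corner case: it comprises every group whose order is divisible by a prime $p\equiv 2\pmod 3$, including all even-order groups (other than $\Z_2^k$) to which the proposition applies. Since you never exhibit the element $s$ whose existence you describe as the delicate point, the proof is incomplete for all of these groups.

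The missing idea is to take $S=\{0\}$. This is illegal for ordinary sum-free sets but perfectly legitimate for \emph{distinct} sum-free sets, and it dissolves the tension you describe between sum edges and difference edges: in $L^{\star}_{\{0\}}[T]$ there are no difference edges at all ($x-y=0$ forces $x=y$), no loops, and the only distinct Schur triples involving $0$ are $\{x,-x,0\}$ with $x\ne -x$, so the graph is precisely the involution $x\mapsto -x$ restricted to $T$. Hence for any symmetric maximum sum-free set $T=-T$ with $0\notin T$ and no elements of order $2$ --- e.g.\ the Diananda--Yap set $T=(g+H)\cup(4g+H)\cup\dots\cup((p-1)g+H)$ for type~\rom{1}$(p)$ with $p$ odd, the analogous union of cosets $(2g+H)\cup(5g+H)\cup\dots\cup((m-2)g+H)$ for type~\rom{3}, or $\{1,3,\dots,2^{\alpha}-1\}\times K$ for $G=\Z_{2^{\alpha}}\times K$ with $\alpha\ge 2$ --- the distinct link graph is a perfect matching and $\mis(L^{\star}_{\{0\}}[T])=2^{\mu(G)/2}$ directly via (\ref{lower bound fm star link graph}). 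This is how the paper treats every case except type~\rom{2} (where it uses your singleton $s$ in the third coset) and $\Z_2^k$ (where it notes $A\mapsto A\cup\{0\}$ is a bijection between maximal sum-free and maximal distinct sum-free sets and quotes $\fm(\Z_2^k)\ge 2^{\mu(\Z_2^k)/2}$ from the earlier literature). No lifting via Lemma~\ref{generalisation link graph} is needed anywhere.
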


\begin{proof}

We first assume that $|G|$ is odd. If $G$ is type \rom{1}$(p)$, with $p\ne2$ and $p\equiv2\pmod{3}$, then by Theorem \ref{mu(G)}, $\mu(G)=|G|\left(\frac{1}{3}+\frac{1}{3p}\right)$. Let $H<G$ be a subgroup of index $p$ and $g\notin H$ such that the cosets of $H$ are $H$, $g+H$,\ldots,$(p-1)g+H$. As in \cite[Theorem 2]{yan} we consider $T:=(g+H)\cup(4g+H)\cup\ldots\cup ((p-1)g+H)$. Note that $T$ is sum-free and has size $|T|=\frac{p+1}{3}|H|=\mu(G)$. Then $\Gamma:=L_{\{0\}}^{\star}[T]$ matches perfectly the elements of $ig+H$ with those of $-ig+H$, for $i=1,4,\ldots,p-1$;  since $p$ is an odd prime number congruent to $2\pmod{3}$, the number of cosets $(p+1)/{3}$ is even. 
Thus, $\Gamma$ is a perfect matching and
so by (\ref{lower bound fm star link graph}), $\fm^{\star}(G)\geq \mis(\Gamma)=2^{{\mu(G)}/{2}}$. 

If $G$ is type \rom{2}, by Theorem \ref{mu(G)}, $\mu(G)=|G|/{3}$. We give a construction similar to \cite[Proposition 5.6]{BLST2}. Let $H<G$ be a subgroup of index 3. Then there are three cosets: $H$, $1+H$ and $2+H$. Let $T:=1+H$, $x\in 2+H$ and $\Gamma:=L_{\{x\}}^{\star}[T]$.  Note there is no loop at $2x\in T$ because $\{x,x,2x\}$ is not a distinct Schur triple. Further, in the case where $y_0=x-y_0$ (which happens for a unique $y_0\in T$ since $|G|$ is odd), $y_0$ is an isolated vertex. 
All other $y\in 1+H$ have a unique neighbour in $\Gamma$, namely $x-y$.
Then by (\ref{lower bound fm star link graph}), $\fm^{\star}(G)\geq \mis(\Gamma)=2^{(|T|-1)/{2}}=2^{(\mu(G)-1)/{2}}$. 

If $G$ is type \rom{3}, then by Theorem \ref{mu(G)}, $\mu(G)=|G|\left(\frac{1}{3}+\frac{1}{3m}\right)$ where $m$ is the largest order of any element of $G$. Let $g\in G$ be an element of order $m$ and $H<G$ a subgroup of index $m$ such that its cosets are $H$,$g+H$,\ldots,$(m-1)g+H$. 
Note that as $G$ is type~\rom{3}, $m$ is odd and $m \equiv 1 \pmod{3}$.
As in \cite[Theorem 5]{yan} we consider $T:=(2g+H)\cup(5g+H)\cup\ldots\cup((m-2)g+H)$. Note that $T$ is sum-free and has size $|T|=\frac{m-1}{3}|H|=\mu(G)$. Similarly to the case of type~\rom{1} groups, $\Gamma:=L_{\{0\}}^{\star}[T]$ is a perfect matching. Thus by (\ref{lower bound fm star link graph}), $\fm^{\star}(G)\geq \mis(\Gamma)=2^{{\mu(G)}/{2}}$.

\smallskip

Now we suppose that $|G|$ is even. In this case, by Theorem~\ref{mu(G)}, $\mu(G)={|G|}/{2}$. Since $G$ is not of the form $\Z_2^k\times K$ with $|K|\geq 3$ odd, either $G=\Z_{2^\alpha}\times K$ for some $K$ and $\alpha\geq2$, or $G=\Z_2^k$. In the first case, consider $T:=\{1,3,\ldots,2^\alpha-1\}\times K$. Then $\Gamma:=L_{\{0\}}^{\star}[T]$ is a perfect matching between the elements $(a,k)$ and $(-a,-k)$ with $a\in\{1,3,\ldots,2^\alpha-1\}$, $k\in K$. Since $\alpha\geq 2$, $a\ne-a$ for all $a\in\{1,3,\ldots,2^\alpha-1\}$. Thus by (\ref{lower bound fm star link graph}), $\fm^{\star}(G)\geq\mis(\Gamma)=2^{{|T|}/{2}}=2^{{\mu(G)}/{2}}$. When $G=\Z_2^k$, note there is a one-to-one correspondence between the maximal sum-free sets of $G$ and the maximal distinct sum-free sets of $G$. Indeed, since $2x=0$ for all $x\in G$, a Schur triple which does not contain the zero element is a distinct Schur triple. Conversely, a distinct Schur triple cannot contain the zero element. Thus, $A\mapsto A\cup\{0\}$ is a bijection between  the maximal sum-free subsets of $G$ and the maximal distinct sum-free subsets of $G$. Hence $\fm^{\star}(\Z_2^k)=\fm(\Z_2^k)\geq 2^{{\mu(\Z_2^k)}/{2}}$, where the latter inequality was proven in~\cite[Proposition 5.3]{BLST2}.
\end{proof}

\begin{proposition}\label{lower bound fm star exceptions}
Let $G$ be an abelian group of order $n$ of the form $G=\Z_2^k\times K$ with $k\geq 1$ and $|K|\geq 3$ odd and let $a:=2^{k-1}$. Then $\fm^{\star}(G)\geq 2^{(\frac{1}{2}-\frac{a}{n})\mu(G)}$.
\end{proposition}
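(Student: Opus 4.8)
The plan is to produce a single pair of disjoint distinct sum-free sets $B,S$ whose distinct link graph $L^{\star}_S[B]$ already has $2^{(\frac12-\frac{a}{n})\mu(G)}$ maximal independent sets, and then to invoke \eqref{lower bound fm star link graph}. First I would record that $G$ is of type \rom{1}$(2)$: since $|K|$ is odd, the smallest prime $p\equiv 2\pmod 3$ dividing $n$ is $p=2$. Theorem~\ref{mu(G)} then gives $\mu(G)=n\left(\frac13+\frac16\right)=n/2$, so the target exponent equals $\left(\frac12-\frac{a}{n}\right)\cdot\frac n2=\frac n4-\frac a2$.

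Next I would choose an index-two subgroup $H<G$. Because every homomorphism $K\to\Z_2$ is trivial (as $|K|$ is odd), any surjection $\phi\colon G\to\Z_2$ factors as $\phi(u,\kappa)=\psi(u)$ for some surjective $\psi\colon\Z_2^k\to\Z_2$; writing $G=\Z_2^k\times K$, this means $H=\ker\psi\times K$. Let $T:=g+H$ be the non-trivial coset, so $|T|=n/2=\mu(G)$. It is sum-free, since for $x,y\in T$ we have $x+y\in 2g+H=H$ while $T$ lies in the other coset, and hence it is distinct sum-free; moreover $T$ is disjoint from $S:=\{0\}$ as $0\in H$. I would then set $\Gamma:=L^{\star}_{\{0\}}[T]$ and describe it explicitly. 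Since $S$ is a singleton there can be no loops, and two distinct $x,y\in T$ are adjacent precisely when $\{x,y,0\}$ is a distinct Schur triple, which (using $0\notin T$) forces the relation $x+y=0$, i.e.\ $y=-x\neq x$. Thus $\Gamma$ is the disjoint union of the perfect matching pairing each $x$ with $-x$ and of the isolated vertices fixed by negation.

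The one genuine computation is counting the negation-fixed points of $T$. Writing elements as $(u,\kappa)$ we have $-(u,\kappa)=(u,-\kappa)$, as $-u=u$ in $\Z_2^k$, so $x=-x$ if and only if $\kappa=0$ (here I use that $K$ is odd, so $0$ is the only element of $K$ with $2\kappa=0$). Hence the fixed points are exactly the $(u,0)\in T$, and there are $|\{u:\psi(u)=1\}|=2^{k-1}=a$ of them. Therefore $\Gamma$ has $a$ isolated vertices and $(|T|-a)/2$ matching edges, each a copy of $K_2$ with $\mis(K_2)=2$, so that $\mis(\Gamma)=2^{(|T|-a)/2}=2^{n/4-a/2}$. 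Applying \eqref{lower bound fm star link graph} then yields $\fm^{\star}(G)\geq\mis(\Gamma)=2^{(\frac12-\frac{a}{n})\mu(G)}$, as required.

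I do not anticipate a serious obstacle: this is a direct analogue of the perfect-matching constructions used for type \rom{1} and type \rom{3} groups in the proof of Proposition~\ref{lower bound fm star}, with $S=\{0\}$ and $B$ a maximum-size coset. The only point needing care is precisely the feature that distinguishes this excluded family from the rest of Proposition~\ref{lower bound fm star}: the odd factor $K$ combined with the $\Z_2^k$ factor forces all $2^{k-1}$ negation-fixed elements of $T$ to survive as isolated vertices rather than being paired off, and it is exactly these isolated vertices that account for the loss of $a/2$ in the exponent.
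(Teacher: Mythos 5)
Your proof is correct and follows essentially the same route as the paper: both take $S=\{0\}$ and $B$ equal to the nontrivial coset of an index-two subgroup (the paper writes it as $\{1,3,\dots,m-1\}\times G'$ after decomposing $G=\Z_{2q}\times G'$, which is the same set in coordinates), observe that $L^{\star}_{\{0\}}[B]$ is a matching $x\leftrightarrow -x$ with exactly $a$ negation-fixed isolated vertices, and conclude $\mis=2^{(n/2-a)/2}$ via \eqref{lower bound fm star link graph}. The only difference is presentational, your coset/homomorphism phrasing versus the paper's explicit coordinate decomposition.
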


\begin{proof}
Recall that by Theorem \ref{mu(G)} we know  $\mu(G)=n/2$. Let $q\geq 3$ be an odd number and $K'$ be a group such that $q||K|$ and  $K=\Z_q\times K'$. Then $G$ can be written as $G=\Z_m\times G'$ with $m:=2q$ and $G':=\Z_2^{k-1}\times K'$. We consider $T:=\{1,3,\ldots,m-1\}\times G'$ which is sum-free, and $\Gamma:=L_{\{0\}}^\star[T]$. Then each element of $T$ of the form $(b,g')$ with $b\in\{1,3,\ldots,m-1\}\backslash\{m/2\}$ and $g'\in G'$ has a unique neighbour in $\Gamma$ distinct from themselves, namely $(-b,-g')$. Since $m/2$ is odd, for all $g'\in G'$, $(m/2,g')\in T$. So if $g'\in G'$ is such that $g'\ne -g'$, then $(m/2,g')$ also has  a unique neighbour in $\Gamma$ different from themselves; otherwise it is an isolated vertex. As $G'$ has $a$ elements $g'$ such that $g'=-g'$,
$$\mis(\Gamma)=2^{\frac{m/2-1}{2}|G'|+\frac{|G'|-a}{2}}=2^{\frac{n}{4}-\frac{a}{2}}=2^{(\frac{1}{2}-\frac{a}{n})\mu(G)},$$
and by (\ref{lower bound fm star link graph}), $\fm^\star(G)\geq 2^{(\frac{1}{2}-\frac{a}{n})\mu(G)}$.
\end{proof}

In \cite[Theorem 3.2]{ls}, Liu and Sharifzadeh proved that there is a constant $c>10^{-4}$ such that for all even order abelian groups $G$ that have a negligible number of order-2 elements (e.g. $a=o(n)$ in Proposition \ref{lower bound fm star exceptions}) we have $\fm(G)<2^{(1/2-c)\mu(G)}$ for sufficiently large $|G|$. This result combined with Propositions \ref{lower bound fm star} and \ref{lower bound fm star exceptions} gives a positive answer to Question \ref{exponential gap} for those abelian groups. We also know there are some groups for which the answer to this question is negative. Indeed, we saw in the proof of Proposition \ref{lower bound fm star} that $\fm^{\star}(\Z_2^k)=\fm(\Z_2^k)$.

\section*{Acknowledgment}
The authors are grateful to the referee for their helpful and careful review.

\end{document}